\definecolor{vegasgold}{rgb}{0.77, 0.7, 0.35}
\definecolor{darkgoldenrod}{rgb}{0.72, 0.53, 0.04}
\definecolor{gold(metallic)}{rgb}{0.83, 0.69, 0.22}
\DeclareFontFamily{U}{wncy}{}
\DeclareFontShape{U}{wncy}{m}{n}{<->wncyr10}{}
\DeclareSymbolFont{mcy}{U}{wncy}{m}{n}
\DeclareMathSymbol{\Sh}{\mathord}{mcy}{"58}
\newcommand{\Zhe}{\mbox{\usefont{T2A}{\rmdefault}{m}{n}\CYRZH}}
\newtheorem{theorem}{Theorem}[section]
\newtheorem{lemma}[theorem]{Lemma}
\newtheorem{question}[theorem]{Question}
\newtheorem{conjecture}[theorem]{Conjecture}
\newtheorem{proposition}[theorem]{Proposition}
\newtheorem{corollary}[theorem]{Corollary}
\newtheorem{definition}[theorem]{Definition}
\numberwithin{equation}{section}
\theoremstyle{remark}
\newtheorem{remark}[theorem]{Remark}
\newtheorem{example}[theorem]{Example}
\newcommand{\Rfine}[1]{\mathcal{R}_{p^\infty}(E/#1)}
\newcommand{\Z}{\mathbb{Z}}
\newcommand{\Q}{\mathbb{Q}}
\newcommand{\F}{\mathbb{F}}
\newcommand{\mufine}{\mu_p^{\op{fn}}(E/\Q_{\op{cyc}})}
\newcommand{\lafine}{\lambda_p^{\op{fn}}(E/\Q_{\op{cyc}})}
\newcommand{\op}[1]{\operatorname{#1}}
\newcommand\mtx[4] { \left( {\begin{array}{cc}
 #1 & #2 \\
 #3 & #4 \\
 \end{array} } \right)}
\begin{document}
\title[Arithmetic statistics for the Fine Selmer group]{Arithmetic statistics for the Fine Selmer group in Iwasawa theory}

\author[A.~Ray]{Anwesh Ray}
\address[A.~Ray]{Centre de recherches mathématiques,
Université de Montréal,
Pavillon André-Aisenstadt,
2920 Chemin de la tour,
Montréal (Québec) H3T 1J4, Canada}
\email{anwesh.ray@umontreal.ca}

\author[R.~Sujatha]{R.Sujatha}
\address[R.~Sujatha]{Department of Mathematics\\
University of British Columbia\\
Vancouver BC, Canada V6T 1Z2}
\email[Sujatha]{sujatha@math.ubc.ca}

\begin{abstract}
We study arithmetic statistics for Iwasawa invariants for fine Selmer groups associated to elliptic curves.
\end{abstract}
\subjclass[2010]{11G05, 11R23 (primary); 11R45 (secondary).}
\keywords{Arithmetic statistics, Iwasawa theory, Fine Selmer groups, elliptic curves, local torsion, Euler characteristic.}

\maketitle

\section{Introduction}
\label{section:intro}
\par Iwasawa studied growth questions of class groups in certain infinite towers of number fields. These initial developments led to deep questions, many of which are still unresolved. Given a number field $F$ and a prime number $p$, let $F_{\op{cyc}}$ denote the cyclotomic $\Z_p$-extension of $F$. Letting $F_n\subset F_{\op{cyc}}$ be such that $[F_n:F]=p^n$, denote by $\op{Cl}_p(F_n)$ the $p$-Sylow subgroup of the class group of $F_n$. Iwasawa showed that for $n\gg 0$,
\begin{equation}\label{first equation}\# \op{Cl}_p(F_n)=p^{p^n\mu+\lambda n+\nu},\end{equation} where $\mu, \lambda\in \Z_{\geq 0}$ and $\nu\in \Z$. Let $K(F_{\op{cyc}})\subset \bar{K}$ be the maximal abelian pro-$p$ extension of $F_{\op{cyc}}$ in which all primes of $F_{\op{cyc}}$ are unramified. The classical $\mu=0$ conjecture of Iwasawa states that $X(F_{\op{cyc}}):=\op{Gal}(K(F_{\op{cyc}})/F_{\op{cyc}})$ is finitely generated as a $\Z_p$-module. Equivalently, the invariant $\mu$ in \eqref{first equation} is equal to $0$. The $\mu=0$ conjecture was proved by Ferrero and Washington \cite{ferrero1979iwasawa} for all abelian number fields $F/\Q$. 
\par Mazur initiated the Iwasawa theory of elliptic curves and abelian varieties in \cite{mazur72}. Let $E$ be an elliptic curve and $p$ an odd prime number at which $E$ has good ordinary reduction. The main object of study is the $p$-primary Selmer group over the cyclotomic $\Z_p$-extension of $\Q$. Greenberg conjectured that when the residual representation on the $p$-torsion subgroup of $E$ is irreducible, the Iwasawa $\mu$-invariant of this Selmer group must vanish, see \cite[Conjecture 1.11]{Gre99}. This means that the Pontryagin dual of the Selmer group over $\Q_{\op{cyc}}$ is a finitely generated $\Z_p$-module. The rank of the dual Selmer group as a $\Z_p$-module is the $\lambda$-invariant, and is of significant interest. It was proven by Kato \cite{kato2004p} and Rohlrich \cite{rohrlich1984onl} that the rank of $E(\Q_n)$ is bounded as $n\rightarrow \infty$. In fact, $\op{rank} E(\Q_n)$ is always bounded above by the $\lambda$-invariant. When the residual representation is reducible, there are examples and explicit criteria under which the $\mu$-invariant is positive, see \cite[sections 3 and 7]{ray2021mu}. 
\par Let $E_{/\Q}$ be an elliptic curve with good reduction at an odd prime $p$ (either ordinary or supersingular). The \emph{fine Selmer group} was systematically studied by Coates and the second named author in \cite{coates2005fine}. This is the subgroup $\Rfine{\Q_{\op{cyc}}}$ of the $p$-primary Selmer group over $\Q_{\op{cyc}}$ consisting of all cohomology classes that are trivial when localized at the primes above $p$ and the primes at which $E$ has bad reduction (cf. Definitions \ref{fine Selmer def} and \ref{fine Selmer def 2}). The fine Selmer group is closely related to class group extensions studied by Iwasawa. Conjecture A in \cite{coates2005fine} predicts that the $\mu$-invariant of the fine Selmer group is always zero. Let $F=\Q(E[p])$ be the extension of $\Q$ which is \emph{cut out} by the residual representation on $E[p]$. In other words, it is the Galois extension of $\Q$ fixed by the kernel of the residual representation. By Serre's Open image theorem \cite[section 4, Theorem 3]{Serre72}, if $E$ is a non-CM elliptic curve, $\op{Gal}(F/\Q)$ is isomorphic to $\op{GL}_2(\Z/p\Z)$ for all but finitely many primes $p$. Thus, the extension $F/\Q$ is in most cases non-abelian and in fact, non-solvable; and Iwasawa's $\mu=0$ conjecture is wide open for such extensions. It follows from results of Coates and the second named author that Iwasawa's $\mu=0$ conjecture for $\op{Gal}(K(F_{\op{cyc}})/F_{\op{cyc}})$ is closely related to the $\mu=0$ conjecture for the fine Selmer group. In greater detail, the $\mu$-invariant vanishes for the fine Selmer group over $F_{\op{cyc}}$ vanishes if and only if the classical Iwasawa $\mu$-invariant for $X(F_{\op{cyc}})$ vanishes (cf. Theorem \ref{class group relation to conjecture A}).  

\par The arithmetic statistics of elliptic curves is concerned with the study of the average behaviour of certain interesting arithmetic invariants associated with elliptic curves. One such invariant is the rank of the Mordell-Weil group. A conjecture of Katz and Sarnak \cite{katz1999random} supported with heuristics from random matrix theory predicts that when ordered by height (or conductor or discriminant), $50\%$ of all elliptic curves have rank $0$ and the other $50\%$ have rank $1$. Certain unconditional results on rank distribution have been proven by Bhargava and Shankar \cite{bhargava2013average} who study the average cardinality of certain Selmer groups over $\Q$. This provides ample motivation to study the average behavior of Iwasawa invariants of elliptic curves associated to Selmer groups over $\Q_{\op{cyc}}$. The main difficulty is that the methods of Bhargava and Shankar can only be applied to $n$ Selmer groups over $\Q$ and for numbers $n\leq 5$. Therefore, an altogether different method is required to study Selmer groups over the cyclotomic $\Z_p$-extension. Arithmetic statistics for the Iwasawa theory of elliptic curves was initiated by Kundu and the first named author in \cite{KR21}. In the present article, we study similar questions for the fine Selmer group. In greater detail, we study the following questions:
\begin{enumerate}
    \item Fix a prime number $p$. What can be said about the proportion of elliptic curves $E_{/\Q}$ with good reduction at $p$ for which the $p$-primary fine Selmer group over $\Q_{\op{cyc}}$ is infinite (i.e., either $\mu>0$ or $\lambda>0$)? Here, the elliptic curves are ordered according to their \emph{naive height} (cf. section \ref{s 4} for further details).
    \item Given an elliptic curve $E_{/\Q}$ and a fixed positive number $Y>0$, what can be said about the number of primes $p\leq Y$ at which $E$ has good reduction and the fine Selmer group of $E$ over $\Q_{\op{cyc}}$ is infinite.
\end{enumerate}
In section \ref{s 4}, we study the first question and prove explicit results. From a statistical point of view it makes sense to only consider elliptic curves of rank $\leq 1$ (since the elliptic curves of rank $>1$ are expected to constitute a set of density $0$). Let $p$ be a prime $\geq 5$. Consider the set $\mathfrak{B}_p$ (resp. $\mathfrak{D}_p$) consisting of elliptic curves $E_{/\Q}$ that have Mordell-Weil rank $0$ (resp. $1$), good reduction at $p$, and for which the $p$-primary fine Selmer group $\Rfine{\Q_{\op{cyc}}}$ is infinite. Theorem \ref{th 4.6} gives a precise upper bound for the upper natural density of $\mathfrak{B}_p$ in terms of $p$. In greater detail, let $\bar{\mathfrak{d}}(\mathfrak{B}_p)$ denote the upper density of $\mathfrak{B}_p$ (see Definition \ref{density def} for a precise definition). Let $f,g$ and $h$ be functions defined on the set of primes $p$, taking positive real values. Then, we say that $f(p)\leq g(p)+O(h(p))$ if $f(p)\leq g(p) + c h(p)$ for some constant $c\geq 0$. We view the association $p\mapsto \bar{\mathfrak{d}}(\mathfrak{B}_p)$ as a function on the set of prime numbers $p$. We show that \[\bar{\mathfrak{d}}(\mathfrak{B}_p)\leq \frac{2\zeta(10)+1}{p}+O\left(\frac{\op{log}p(\op{log log} p)^2}{p^{3/2}}\right).\] A similar bound is also obtained for the function $p\mapsto \bar{\mathfrak{d}}(\mathfrak{D}_p)$, see Theorem \ref{th 4.8}. The bounds we obtain for $\bar{\mathfrak{d}}(\mathfrak{B}_p)$ and $\bar{\mathfrak{d}}(\mathfrak{D}_p)$ are conditional. For all primes $p$ for which our bounds are valid, it is assumed that $\Sh(E/\Q)[p^\infty]$ is finite. Furthermore, we assume a conjecture of Delaunay on the density of elliptic curves $E_{/\Q}$ for which $\Sh(E/\Q)[p^\infty]\neq 0$. We refer to Conjecture \ref{Del} for a precise statement. One also considers the set $\mathfrak{F}_p$ of all elliptic curves $E_{/\Q}$ with Mordell-Weil rank $0$ and good \emph{ordinary} reduction at $p$, such that the entire Selmer group $\op{Sel}_{p^\infty}(E/\Q_{\op{cyc}})$ is infinite. The bound obtained for $\bar{\mathfrak{d}}(\mathfrak{F}_p)$ in \cite[section 4]{KR21} is of the form 
\[\bar{\mathfrak{d}}(\mathfrak{F}_p)=O\left(\frac{\op{log}p(\op{log log} p)^2}{p^{1/2}}\right)\] (see Theorem \ref{kundu ray theorem} for the refined statement). The results show that that given a prime $p$ the fine Selmer group $\Rfine{\Q_{\op{cyc}}}$ of most elliptic curves is finite. Furthermore, we give an explicit upper bound for the proportion of elliptic curves for which $\Rfine{\Q_{\op{cyc}}}$ may be infinite and this proportion becomes smaller as $p$ gets larger. In fact, the quantity decreases at the rate of $O(1/p)$. On the other hand, for elliptic curves of rank 0, the Selmer group $\op{Sel}_{p^\infty}(E/\Q_{\op{cyc}})$ shows similar behavior on average. However, as $p$ gets larger, the upper bound decreases at a much slower rate. 
\par In section \ref{s 5}, we study a variant of the second question. Let $E$ be an elliptic curve with Mordell-Weil rank zero. According to Corollary \ref{ cor 2.9}, the set of primes $p$ at which $\Rfine{\Q_{\op{cyc}}}$ is infinite is contained in the set of primes $\Pi_{lt,E}\cup \mathfrak{S}$. Here, $\Pi_{lt, E}$ is the set of primes $p$ at which $E(\Q_p)[p]\neq 0$. Such primes are referred to as the set of \emph{local torsion primes}. On the other hand, $\mathfrak{S}$ is an explicit finite set of primes (see \emph{loc. cit.} for details). Given a positive number $Y>0$, we denote by $\Pi_{lt,E}(Y)$ the set of local torsion primes $p$ that are $\leq Y$. We prove results about the expected size of $\# \Pi_{lt, E}(Y)$ when $Y$ is fixed and $E$ ranges over all elliptic curves. This gives us some insight into the number of primes $\leq Y$ at which the fine Selmer group may be infinite. We use a version of the Large Sieve inequality due to Huxley, see Theorem \ref{huxleythm}. Fix $Y>0$ (for example $Y=100$), let $P(Y):=\sum_{p\leq Y} \frac{\# \mathfrak{A}_p}{p^4}$ be the quantity defined in the discussion following the proof of Theorem \ref{mean square}. Here, the fraction $\frac{\# \mathfrak{A}_p}{p^4}$ represents the proportion of elliptic curves $\mathcal{E}$ over $\Z_p$ such that $\mathcal{E}(\Q_p)[p]\neq 0$. We refer to Definition \ref{ definition Cp Sp} for the precise definition of the set $\mathfrak{A}_p$. The main result of section \ref{s 5}, Theorem \ref{th s5 main}, shows that $P(Y)$ is a good estimate for $\#\Pi_{lt,E}(Y)$. More precisely, it is shown that there is an absolute constant $c>0$ such that for any number $\beta>0$, the proportion of all elliptic curves $E_{/\Q}$ for which \begin{equation}\label{above inequality}|\#\Pi_{lt,E}(Y)-P(Y)|<\beta \sqrt{P(Y)}\end{equation} is $\geq (1-\frac{c}{\beta^2})$. Thus, the average size of $\# \Pi_{lt, E}(Y)$ is close to $P(Y)$ for an explicit positive density set of elliptic curves. For instance, setting $\beta=10\sqrt{c}$, we find that $\geq 99\%$ of elliptic curves satisfy the above inequality \eqref{above inequality}. These results shed light on the distribution of primes outside of which the fine Selmer group is finite, when the elliptic curve $E$ in question has Mordell-Weil rank zero.

\subsection*{Acknowledgements} The first named author's research is supported by the CRM-Simons postdoctoral fellowship. He would like to thank Jeffrey Hatley, Debanjana Kundu and Antonio Lei for helpful discussions and comments. He would also like to thank Christian Wuthrich and Will Sawin for their suggestions in response to a question posted by the first named author on Mathoverflow.org. The second named author gratefully acknowledges support from NSERC Discovery grant 2019-03987. The authors would like to thank the referees for helpful suggestions.
\section{Iwasawa theory of the Fine Selmer group}
\par Let $E$ be an elliptic curve defined over $\Q$ and $p$ an odd prime at which $E$ has good reduction. Fix an algebraic closure $\bar{\Q}$ of $\Q$ and a finite set of primes $S$ containing $\{p,\infty\}$ outside of which $E$ has good reduction. Let $\Q_S$ be the maximal extension of $\Q$ in $\bar{\Q}$ in which all primes $\ell\notin S$ are unramified. Denote by $E[p^n]$ the $p^n$-torsion subgroup of $E(\bar{\Q})$, and set $E[p^\infty]$ to be the the union of all $p$-primary torsion groups $E[p^n]$. Given a field $L\subset \Q_S$, set $H^i(\Q_S/L, \cdot):=H^i(\op{Gal}(\Q_S/L), \cdot)$. For a number field $L$ and a prime $\ell\in S$, set $K_\ell(\cdot /L)$ to be the direct sum $\bigoplus_{v|\ell} H^1(L_v, \cdot)$. Here, $v$ ranges over the primes of $L$ above $\ell$.

\begin{definition}\label{fine Selmer def}Let $L$ be a number field. The \emph{fine Selmer group} $\Rfine{L}$ is defined to be the kernel of the restriction map 
\[\op{ker}\left\{H^1\left(\Q_S/L, E[p^\infty]\right)\longrightarrow \bigoplus_{\ell \in S} K_\ell(E[p^\infty]/L)\right\}.\]
\end{definition}
\par We study the fine Selmer group from an Iwasawa theoretic point of view. This involves passing up an infinite tower of number fields. Let $\Q(\mu_{p^\infty})$ be the field generated by $\Q$ and the $p$-power roots of unity $\mu_{p^\infty}\subset \bar{\Q}$. For $n\in \Z_{\geq 1}$, let $\Q_n\subset \Q(\mu_{p^\infty})$ be the extension of $\Q$ contained in $\Q(\mu_{p^\infty})$ such that $[\Q_n:\Q]=p^n$. The \emph{cyclotomic $\Z_p$-extension} of $\Q$ is the union $\Q_{\op{cyc}}:=\bigcup_{n\geq 1} \Q_n$, with Galois group $\Gamma:=\op{Gal}(\Q_{\op{cyc}}/\Q)$. Fix a topological generator $\gamma\in \Gamma$. This gives rise to an isomorphism $\Z_p\xrightarrow{\sim} \Gamma$, sending $a\in \Z_p$ to $\gamma^a$. The \emph{Iwasawa algebra} is the completed group algebra $\Lambda:=\varprojlim_n \Z_p[\op{Gal}(\Q_n/\Q)]$. Fix an isomorphism of $\Lambda$ with the formal power series ring $\Z_p\llbracket T \rrbracket$, where $T$ is the formal variable in place of $(\gamma-1)\in \Lambda$.

\par \begin{definition}\label{fine Selmer def 2}The fine Selmer group over $\Q_{\op{cyc}}$ is taken to be the direct limit 
\begin{equation}\label{fine selmer def 2}\Rfine{\Q_{\op{cyc}}}:=\varinjlim_n \Rfine{\Q_{n}}\end{equation} and is a cofinitely generated module over $\Lambda$.
\end{definition}In other words, the Pontryagin dual 
\[\mathcal{R}_{p^\infty}(E/\Q_{\op{cyc}})^\vee:=\op{Hom}_{\op{cts}}\left(\mathcal{R}_{p^\infty}(E/\Q_{\op{cyc}}), \Q_p/\Z_p\right)\] is finitely generated over $\Lambda$. We refer to \cite{coates2000galois} for the definition of the $p$-primary Selmer group over $\Q_{\op{cyc}}$, which we denote by $\op{Sel}_{p^\infty}(E/\Q_{\op{cyc}})$. Note that the fine Selmer group is a subgroup of $\op{Sel}_{p^\infty}(E/\Q_{\op{cyc}})$. It follows from a deep result of Kato \cite{kato2004p} that $\op{Sel}_{p^\infty}(E/\Q_{\op{cyc}})^\vee$ is torsion over $\Lambda$. In particular it follows that $\Rfine{\Q_{\op{cyc}}}^\vee$ is torsion over $\Lambda$.
\par Let $M$ be a cofinitely generated and cotorsion $\Lambda$-module. We introduce the Iwasawa invariants associated to the Pontryagin dual $M^\vee:=\op{Hom}\left(M, \Q_p/\Z_p\right)$. Associated with $M^\vee$ are its Iwasawa invariants. By the \emph{Structure Theorem} for finitely generated and torsion $\Lambda$-modules \cite[Theorem 13.12]{washington1997}, $M^{\vee}$ is pseudo-isomorphic to a finite direct sum of cyclic $\Lambda$-modules. In other words, there is a map of $\Lambda$-modules
\[
M^{\vee}\longrightarrow \left(\bigoplus_{i=1}^s \Lambda/(p^{\mu_i})\right)\oplus \left(\bigoplus_{j=1}^t \Lambda/(f_j(T)) \right)
\]
with finite kernel and cokernel. Here, $\mu_i>0$ and $f_j(T)$ is a distinguished polynomial (i.e., a monic polynomial with non-leading coefficients divisible by $p$).
The characteristic ideal of $M^\vee$ is (up to a unit) generated by
\[
 f_{M}(T) := p^{\sum_{i} \mu_i} \prod_j f_j(T)=p^\mu g_M(T),
\]
where $\mu=\sum_i \mu_i$ and $g_M(T)$ is the distinguished polynomial $\prod_j f_j(T)$. The quantity $\mu$ is the $\mu$-invariant, which is denoted $\mu_p(M)$, and the $\lambda$-invariant $\lambda_p(M)$ is the degree of $g_M(T)$. In this paper, we shall set $\mu_p(E/\Q_{\op{cyc}})$ (resp. $\lambda_p(E/\Q_{\op{cyc}})$) to denote the $\mu$-invariant (resp. $\lambda$-invariant) of the classical Selmer group $\op{Sel}_{p^\infty}(E/\Q_{\op{cyc}})$. On the other hand, we set $\mu_p^{\op{fn}}(E/\Q_{\op{cyc}})$ (resp. $\lambda_p^{\op{fn}}(E/\Q_{\op{cyc}})$) to denote the $\mu$-invariant (resp. $\lambda$-invariant) of the fine Selmer group $\mathcal{R}_{p^\infty}(E/\Q_{\op{cyc}})$. At this point, we note that for the fine Selmer group, there is no known analog of the main conjecture. Therefore, the fine Selmer group is a purely algebraic object and in many respects, its structure is more mysterious than the classical Selmer group.

\par Let $E$ be an elliptic curve with good reduction at $p$ and let $F:=\Q(E[p])$, i.e., the field which is fixed by the kernel of the mod-$p$ representation on $E[p]$. Let $\mu_p(F_{\op{cyc}}/F)$ be the classical Iwasawa $\mu$-invariant associated with the maximal abelian pro-$p$ extension of $F_{\op{cyc}}$ which is unramified at all primes. The classicial $\mu=0$ conjecture of Iwasawa predicts that $\mu_p(K_{\op{cyc}}/K)=0$ for any number field $K$. The following result due to Coates and the second named author establishes a relationship between the vanishing of the classical $\mu$-invariant and the $\mu$-invariant of the fine Selmer group of $E$ over $F$.

\begin{theorem}\label{class group relation to conjecture A}
Let $E$ be an elliptic curve and $p$ an odd prime number. Let $F$ be the number field given by $\Q(E[p])$. Then, the following are equivalent
\begin{enumerate}
\item $\mathcal{R}_{p^\infty}(E/F_{\op{cyc}})^\vee$ is a torsion $\Lambda$-module with $\mu_p^{\op{fn}}(E/F_{\op{cyc}})=0$,
\item $\mu_p(F_{\op{cyc}}/F)=0$.
\end{enumerate}
\end{theorem}
\begin{proof}
The statement follows directly from from \cite[Theorem 3.4]{coates2005fine}. 
\end{proof}
One should note that the above result in fact provides us with a criterion for the classical $\mu=0$ conjecture to hold for number field extensions $F=\Q(E[p])$, which need not be abelian or even solvable. However, instead of studying the vanishing of $\mu_p^{\op{fn}}(E/F_{\op{cyc}})$, we shall study conditions for the vanishing of $\mu_p^{\op{fn}}(E/\Q_{\op{cyc}})$. Conjecture A of \cite{coates2005fine} asserts that $\mufine=0$ for all elliptic curves $E_{/\Q}$. The above result shows that it is a special case of the $\mu=0$ conjecture for number fields. 

\par Now consider for a moment the special case when the residual representation
\[\bar{\rho}:\op{Gal}(\bar{\Q}/\Q)\rightarrow \op{GL}_2(\F_p)\]on $E[p]$ is reducible, i.e., $\bar{\rho}=\mtx{\varphi_1}{\ast}{0}{\varphi_2}$, for globally defined characters \[\varphi_i: \op{Gal}(\bar{\Q}/\Q)\rightarrow \F_p^\times.\]
\begin{proposition}
Let $E_{/\Q}$ be an elliptic curve and $p$ an odd prime such that $\bar{\rho}$ is reducible, then, $\mufine=0$.
\end{proposition}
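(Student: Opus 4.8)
The plan is to reduce the vanishing of $\mufine$ to a finiteness statement for mod-$p$ fine Selmer groups attached to characters, where it becomes an instance of the Ferrero--Washington theorem. Recall that for a cofinitely generated $\Lambda$-module $M$ whose Pontryagin dual is $\Lambda$-torsion one has $\mu_p(M)=0$ if and only if $M[p]$ is finite; this applies to $M=\Rfine{\Q_{\op{cyc}}}$. Feeding the Kummer sequence $0\to E[p]\to E[p^\infty]\xrightarrow{\,p\,}E[p^\infty]\to 0$ into the definition of the fine Selmer group and running the snake lemma on the resulting commutative diagram of cohomology and localization maps, one checks---using that $\Q_{\op{cyc}}$ has only finitely many primes above $S$ and that all intervening $H^0$-groups contribute finite error terms (for instance $E(\Q_{\op{cyc}})[p^\infty]/p$ is always finite)---that $\mufine=0$ if and only if the mod-$p$ fine Selmer group
\[
\mathcal{R}(E[p]/\Q_{\op{cyc}}):=\op{ker}\Big(H^1(\Q_S/\Q_{\op{cyc}},E[p])\to \bigoplus_{\ell\in S}\ \bigoplus_{v\mid\ell}H^1\big((\Q_{\op{cyc}})_v,E[p]\big)\Big)
\]
is finite. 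So it suffices to establish this finiteness.

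Now I use reducibility. There is a short exact sequence of $\op{Gal}(\bar{\Q}/\Q)$-modules $0\to\F_p(\varphi_1)\to E[p]\to\F_p(\varphi_2)\to 0$ (the extension class records the entry $\ast$), and $\varphi_1,\varphi_2$ are unramified outside $S$. The strict (``fine'') Selmer construction is left exact up to finite groups: a diagram chase, again using the finiteness of the set of primes of $\Q_{\op{cyc}}$ over $S$ and of the relevant $H^0$'s, shows that $\mathcal{R}(E[p]/\Q_{\op{cyc}})$ is finite provided $\mathcal{R}(\F_p(\varphi_i)/\Q_{\op{cyc}})$ is finite for $i=1,2$. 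Thus the problem is reduced to a single character.

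Fix such a character $\varphi\colon\op{Gal}(\bar{\Q}/\Q)\to\F_p^\times$; its order divides $p-1$. Let $L:=\bar{\Q}^{\op{ker}\varphi}$, an abelian extension of $\Q$ of degree prime to $p$, unramified outside $S$, and put $L_{\op{cyc}}:=L\cdot\Q_{\op{cyc}}$ with $G:=\op{Gal}(L_{\op{cyc}}/\Q_{\op{cyc}})$ of order prime to $p$. Since the coefficients are $p$-torsion, inflation--restriction exhibits $\mathcal{R}(\F_p(\varphi)/\Q_{\op{cyc}})$ as a direct summand of $\mathcal{R}(\F_p/L_{\op{cyc}})$ (an isotypic component for the prime-to-$p$ group $G$; the necessary roots of unity already lie in $\F_p^\times$ because $[L:\Q]\mid p-1$), so it is enough to show $\mathcal{R}(\F_p/L_{\op{cyc}})$ is finite. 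By class field theory, $\mathcal{R}(\F_p/L_{\op{cyc}})=\op{Hom}(Y,\F_p)$, where $Y$ is the Galois group over $L_{\op{cyc}}$ of its maximal unramified abelian pro-$p$ extension in which all primes above $S$ split completely; in particular $Y$ is a quotient of the unramified Iwasawa module $X(L_{\op{cyc}})$. As $L/\Q$ is abelian, Ferrero--Washington \cite{ferrero1979iwasawa} gives $\mu_p(L_{\op{cyc}}/L)=0$, i.e.\ $X(L_{\op{cyc}})$ is finitely generated over $\Zp$; hence so is the quotient $Y$, and therefore $\op{Hom}(Y,\F_p)=\op{Hom}(Y/pY,\F_p)$ is finite. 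This completes the chain of reductions and proves $\mufine=0$.

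I do not expect any genuine obstacle here: the heart of the matter is simply that reducibility forces everything into abelian number fields, where Ferrero--Washington applies, and the only delicate points are the two bookkeeping steps (passing from $E[p^\infty]$- to $E[p]$-coefficients, and from $E[p]$- to $\F_p(\varphi_i)$-coefficients) where one must verify that only finite error terms are introduced. One could instead try to invoke Theorem \ref{class group relation to conjecture A} directly, noting that when $\bar{\rho}$ is reducible $\op{Gal}(\Q(E[p])/\Q)$ lies in a Borel subgroup of $\op{GL}_2(\F_p)$, hence is solvable with an abelian subfield $F_0$ of index dividing $p$, so that Ferrero--Washington together with ascent of $\mu=0$ along finite $p$-extensions yields $\mu_p(F_{\op{cyc}}/F)=0$ for $F=\Q(E[p])$; but that route additionally requires the insensitivity of the fine-Selmer $\mu$-invariant under the finite base change $\Q_{\op{cyc}}\subset F_{\op{cyc}}$, so the argument sketched above seems more self-contained.
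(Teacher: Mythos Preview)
Your argument is correct. Both your proof and the paper's hinge on the same idea---reducibility forces the relevant arithmetic into an abelian number field where Ferrero--Washington applies---but the executions differ. The paper sets $L=\Q(\varphi_1)\cdot\Q(\varphi_2)$, observes that $L$ is abelian over $\Q$ and that $\Q(E[p])/L$ has $p$-power degree, then quotes \cite[Corollary~3.5]{coates2005fine} to conclude $\mu_p^{\op{fn}}(E/L_{\op{cyc}})=0$ directly from the classical $\mu=0$ for $L$, and finally descends to $\Q_{\op{cyc}}$. You instead stay over $\Q_{\op{cyc}}$: you pass to the mod-$p$ fine Selmer group, filter $E[p]$ by the two characters, and for each $\varphi_i$ go up to $\Q(\varphi_i)$ and invoke Ferrero--Washington by hand via class field theory. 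Your version is more self-contained (it essentially unpacks what the cited corollary is doing) at the cost of length; the paper's is a two-line appeal to a black box. The alternative you sketch at the end---using Theorem~\ref{class group relation to conjecture A} with $F=\Q(E[p])$---is close in spirit to the paper's route but not identical: the paper works with the smaller abelian field $L\subseteq F$ and invokes Corollary~3.5 rather than Theorem~3.4 of \cite{coates2005fine}, thereby sidestepping the ascent of $\mu=0$ along the $p$-extension $F/L$ that you flagged.
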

\begin{proof}
Let $\Q(\varphi_i)$ be the field fixed by the kernel of $\varphi_i$ and set $L$ to be the composite of $\Q(\varphi_1)$ and $\Q(\varphi_2)$. Note that $\Q(E[p^\infty])/L$ is pro-$p$ and since $L$ an abelian number field, by the Theorem of Ferrero and Washington \cite{ferrero1979iwasawa}, the Iwasawa $\mu=0$ conjecture holds for $L_{\op{cyc}}/L$. It follows from \cite[Corollary 3.5]{coates2005fine} that in fact, $\mu_p^{\op{fn}}(E/L_{\op{cyc}})=0$, and it is easy to see that this implies that $\mufine=0$.
\end{proof}
In contrast, the $\mu$-invariant of the Selmer group $\op{Sel}_{p^\infty}(E/\Q_{\op{cyc}})$ is known to be positive in some cases when $\bar{\rho}$ is reducible. From a statistical point of view, the case of interest is that when $\bar{\rho}$ is surjective, in particular, irreducible. It follows from Serre's Open image theorem that when $E_{/\Q}$ is an elliptic curve without complex multiplication, the residual representation on $E[p]$ is surjective for all but finitely many primes $p$. A \emph{Serre curve} is an elliptic curve whose adelic Galois representation has index-$2$ in $\op{GL}_2(\hat{\Z})$, see \cite{jones2010almost} for a more precise definition. Serre curves are of interest since they are the elliptic curves for which the adelic Galois image is as large as possible. It is not hard to see that given a Serre curve, the mod-$p$ Galois representation is irreducible for all odd primes $p$. Jones proves in \emph{loc. cit.} that $100\%$ of elliptic curves ordered according to height are Serre curves.
\par Let $f(T)$ be the characteristic element of the fine Selmer group $\mathcal{R}_{p^\infty}(E/\Q_{\op{cyc}})$, and write $f(T)=a_r T^r+a_{r+1}T^{r+1}+\dots$, where $a_r\in \Z_p$ is non-zero. Here, the number $r$ is called the order of vanishing of $f(T)$ at $T=0$ and $a_r$ is called the \emph{leading term}. The class of $a_r$ modulo $\Z_p^\times$ does not depend on the choice of topological generator $\gamma\in \Gamma$. Let $\mathcal{M}(E/\Q)$ be the subgroup of elements of $\Rfine{\Q}$ that lie in the image of the Kummer map
\[E(\Q)\otimes \Q_p/\Z_p\rightarrow H^1(\Q_S/\Q, E[p^\infty]),\] and the \emph{fine Tate-Shafarevich group} $\Zhe_{p^\infty}(E/\Q)$ is the cokernel of
\[\mathcal{M}(E/\Q)\rightarrow \Rfine{\Q}.\] Then, $\Zhe_{p^\infty}(E/\Q)$ is naturally identified with a subgroup of $\Sh(E/K)[p^\infty]$. In our applications, we would like to understand when $\Zhe_{p^\infty}(E/\Q)$ is non-zero. The following result shows that the fine Tate-Shafarevich group vanishes precisely when the $p$-primary part of the Tate-Shafarevich group does.
\begin{theorem}[Wuthrich]\label{theorem 2.5}
Let $E$ be an elliptic curve defined over $\Q$. Then, $\Zhe_{p^\infty}(E/\Q)\neq 0$ precisely when $\Sh(E/\Q)[p^\infty]\neq 0$. Furthermore, if $\op{rank} E(\Q)>0$, then, $\Zhe_{p^\infty}(E/\Q)=\Sh(E/\Q)[p^\infty]$.
\end{theorem}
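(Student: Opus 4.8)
The plan is to place the defining exact sequence
\[0 \longrightarrow \mathcal{M}(E/\Q) \longrightarrow \Rfine{\Q} \longrightarrow \Zhe_{p^\infty}(E/\Q) \longrightarrow 0\]
alongside the classical descent sequence
\[0 \longrightarrow E(\Q)\otimes\Q_p/\Z_p \xrightarrow{\ \kappa\ } \op{Sel}_{p^\infty}(E/\Q) \longrightarrow \Sh(E/\Q)[p^\infty] \longrightarrow 0,\]
using the inclusion $\Rfine{\Q}\subseteq \op{Sel}_{p^\infty}(E/\Q)$ and the equality $\mathcal{M}(E/\Q)=\Rfine{\Q}\cap \image(\kappa)$. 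The decisive local fact is that $E(\Q_v)\otimes\Q_p/\Z_p=0$ for every $v\in S$ with $v\nmid p$ (since $E(\Q_v)$ has a finite-index subgroup isomorphic to $\Z_\ell$ with $\ell=v\neq p$, and $\Z_\ell\otimes\Q_p/\Z_p=0$), while the archimedean place contributes nothing for odd $p$. First I would extract two consequences: (i) a class in $\op{Sel}_{p^\infty}(E/\Q)$ automatically localizes to $0$ at each $v\in S\setminus\{p\}$, its image lying in the local condition $E(\Q_v)\otimes\Q_p/\Z_p=0$, so that $\Rfine{\Q}=\ker\big(\loc_p\colon \op{Sel}_{p^\infty}(E/\Q)\to H^1(\Q_p,E[p^\infty])\big)$; and (ii) since the global Kummer map is injective, $\mathcal{M}(E/\Q)$ is identified with $\ker\big(\loc_p\colon E(\Q)\otimes\Q_p/\Z_p\to E(\Q_p)\otimes\Q_p/\Z_p\big)$.

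Set $W:=\loc_p\big(\op{Sel}_{p^\infty}(E/\Q)\big)$ and $U_p:=\loc_p\big(E(\Q)\otimes\Q_p/\Z_p\big)$, both subgroups of $E(\Q_p)\otimes\Q_p/\Z_p\cong\Q_p/\Z_p$, with $U_p\subseteq W$. The isomorphism theorems applied to (i) and (ii) then yield a canonical injection
\[\Zhe_{p^\infty}(E/\Q)\;\cong\;\Rfine{\Q}/(\Rfine{\Q}\cap\image\kappa)\;\hookrightarrow\;\op{Sel}_{p^\infty}(E/\Q)/\image\kappa\;=\;\Sh(E/\Q)[p^\infty]\]
with cokernel $W/U_p$. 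In particular, $\Sh(E/\Q)[p^\infty]=0$ immediately forces $\Zhe_{p^\infty}(E/\Q)=0$, which is one half of the asserted equivalence.

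I would then dispose of the case $\op{rank}E(\Q)>0$. Fixing $P_0\in E(\Q)$ of infinite order, its image in $E(\Q_p)\cong\Z_p\oplus(\text{finite})$ is still of infinite order, and because $\Z_p\otimes\Q_p/\Z_p=\Q_p/\Z_p$ the cyclic subgroup it generates already surjects onto $E(\Q_p)\otimes\Q_p/\Z_p$. Hence $U_p=E(\Q_p)\otimes\Q_p/\Z_p\supseteq W$, forcing $W=U_p$, so the injection above is an isomorphism $\Zhe_{p^\infty}(E/\Q)\xrightarrow{\ \sim\ }\Sh(E/\Q)[p^\infty]$. This settles the ``furthermore'' clause, and together with the previous paragraph it gives the full equivalence when $\op{rank}E(\Q)>0$.

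The remaining case --- the crux --- is to show $\Sh(E/\Q)[p^\infty]\neq 0\Rightarrow \Zhe_{p^\infty}(E/\Q)\neq 0$ when $\op{rank}E(\Q)=0$. Here $E(\Q)\otimes\Q_p/\Z_p=0$, so $\mathcal{M}(E/\Q)=0$, $\op{Sel}_{p^\infty}(E/\Q)=\Sh(E/\Q)[p^\infty]$ and $\Zhe_{p^\infty}(E/\Q)=\Rfine{\Q}=\ker(\loc_p|_{\op{Sel}})$; I must show $\Rfine{\Q}\neq 0$. The target $W$ of $\loc_p$ embeds into $\Q_p/\Z_p$, so $\dim_{\F_p}W[p]\leq 1$, whereas the non-degenerate alternating Cassels--Tate pairing forces $\dim_{\F_p}\Sh(E/\Q)[p]$ to be even and hence $\geq 2$ once $\Sh(E/\Q)[p^\infty]\neq 0$. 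Passing to $p$-torsion in the exact sequence $0\to\Rfine{\Q}\to\op{Sel}_{p^\infty}(E/\Q)\to W\to 0$ then gives $\dim_{\F_p}\Rfine{\Q}[p]\geq\dim_{\F_p}\Sh(E/\Q)[p]-\dim_{\F_p}W[p]\geq 1$, so $\Rfine{\Q}\neq 0$. The hard part is precisely this last step: the local and formal arguments only deliver the one-sided inclusion $\Zhe_{p^\infty}(E/\Q)\hookrightarrow\Sh(E/\Q)[p^\infty]$, and reversing it in the rank-zero case genuinely requires global input --- the Cassels--Tate pairing (equivalently, a Poitou--Tate duality argument), together with the finiteness of $\Sh(E/\Q)[p^\infty]$ --- to rule out that the entire Selmer group injects into the $\Z_p$-corank-one local term at $p$ and thereby makes $\Rfine{\Q}$ vanish.
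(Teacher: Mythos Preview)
The paper does not give a proof of this theorem at all; it simply cites \cite[Theorems 3.4, 3.5]{wuthrich2007fine}. Your write-up is a correct and complete reconstruction of Wuthrich's argument: the identification $\Rfine{\Q}=\ker\big(\loc_p\colon\op{Sel}_{p^\infty}(E/\Q)\to H^1(\Q_p,E[p^\infty])\big)$ via the vanishing of $E(\Q_v)\otimes\Q_p/\Z_p$ at $v\nmid p$, the resulting injection $\Zhe_{p^\infty}(E/\Q)\hookrightarrow\Sh(E/\Q)[p^\infty]$ with cokernel $W/U_p$, the surjectivity of $U_p$ onto $E(\Q_p)\otimes\Q_p/\Z_p\cong\Q_p/\Z_p$ in positive rank, and the Cassels--Tate parity argument in rank zero are exactly the ingredients Wuthrich uses.

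One remark worth making explicit: your rank-zero step invokes the non-degeneracy of the (alternating) Cassels--Tate pairing to force $\dim_{\F_p}\Sh(E/\Q)[p]\geq 2$, and you correctly flag that this uses finiteness of $\Sh(E/\Q)[p^\infty]$. The theorem as stated in the paper suppresses this hypothesis, but Wuthrich's original statements carry it, and without it the implication $\Sh[p^\infty]\neq 0\Rightarrow\Zhe_{p^\infty}\neq 0$ can genuinely fail (e.g.\ if $\Sel_{p^\infty}(E/\Q)\cong\Q_p/\Z_p$ were to inject under $\loc_p$). So your caveat is not pedantry but a necessary hypothesis.
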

\begin{proof}
See \cite[Theorems 3.4, 3.5]{wuthrich2007fine}.
\end{proof}
When $\op{rank} E(\Q)=0$, $\Zhe_{p^\infty}(E/\Q)$ may be strictly smaller than $\Sh(E/\Q)[p^\infty]$, as numerical examples in \cite{wuthrich2007fine} show.
\par Wuthrich gives an explicit formula for the leading term of the characteristic element $a_r$, see \cite[Corollary 6.2]{wuthrich2007iwasawa}.  We let $D=D_{E,p}$ be the cokernel of the natural map $E(\Q)\otimes \Z_p$ to the $p$-adic completion of $E(\Q_p)$. At a prime $\ell$ at which $E$ has bad reduction, $c_\ell(E)$ will denote the Tamagawa number at $\ell$. Given two numbers $a,b$, we write $a\sim b$ if $a=ub$ for some unit $u\in \Z_p^\times$. There is a $p$-adic height pairing on the Selmer group $\Rfine{\Q}$, see \cite[section 5]{wuthrich2007iwasawa}. The regulator $\op{Reg}\left(\Rfine{\Q}\right)$ is the determinant of this height pairing. The compact version of the fine Selmer group $\mathfrak{R}_{p^\infty}(E/\Q)$ is the kernel of the localization map
\[\mathfrak{R}_{p^\infty}(E/\Q):=\op{ker}\left(H^1(\Q_S/\Q, T_p(E))\rightarrow  H^1(\Q_p, T_p(E))\right).\]
Here, $T_p(E)$ denotes the $p$-adic Tate-module associated to $E$.
\begin{theorem}[Wuthrich]\label{wuthrich thm}
Let $E$ be an elliptic curve over $\Q$ with potentially good reduction at $p$. Assume that the following conditions are satisfied:
\begin{enumerate}[label=(\alph*)]
    \item The fine Tate-Shafarevich group $\Zhe_{p^\infty}(E/\Q)$ is finite.
    \item The cyclotomic height pairing on $\Rfine{\Q_{\op{cyc}}}$ is nondegenerate.
\end{enumerate}
Then, the following assertions hold
\begin{enumerate}
\item $r=\op{max}\left(0,\op{rank} E(\Q)-1\right)$.
\item There is an injection with finite cokernel $J$ of $\mathfrak{R}_{p^\infty}(E/\Q)$ into the cokernel of the corestriction map 
\[\op{cor}:\varprojlim_n H^1(\Q_S/\Q_n, T_p(E))\rightarrow H^1(\Q_S/\Q, T_p(E)).\]
\item We have the following formula for the leading term $a_r$
\begin{equation}\label{leading term}a_r\sim \op{Reg}\left(\Rfine{\Q}\right)\times \left(\frac{\#\op{Tors}_{\Z_p}(D)\times \prod_{\ell\neq  p} c_\ell(E) \times \#\Zhe_{p^\infty}(E/\Q)}{\# J}\right),\end{equation}
where, $\op{Tors}_{\Z_p}(D)$ denotes the $p$-primary torsion subgroup of $D$.
\end{enumerate}

\end{theorem}
\begin{proof}
The above result is \cite[Theorem 1.1]{wuthrich2007fine}.
\end{proof}
Note that under the assumptions of the above theorem, $r=0$ and the regulator $\op{Reg}\left(\Rfine{\Q}\right)=1$ when $\op{rank} E(\Q)\leq 1$. As a result of the above formula, we obtain an important consequence towards the vanishing of the Iwasawa invariants of the fine Selmer group $\Rfine{\Q_{\op{cyc}}}$.

\begin{corollary}\label{criterion for mu=lambda=0}
Let $E$ be an elliptic curve over $\Q$ and $p$ an odd prime. Assume that $E$ has potentially good reduction at $p$ and satisfied the conditions of Theorem \ref{wuthrich thm}. Furthermore, assume that the following conditions hold:
\begin{enumerate}
    \item $\op{rank} E(\Q)\leq 1$,
    \item $p\nmid c_\ell(E)$ for all primes $\ell\neq p$,
    \item $\Zhe_{p^\infty} (E/\Q)=0$,
    \item $\op{Tors}_{\Z_p}(D)=0$.
\end{enumerate}
Then, the $\mu$ and $\lambda$-invariants of the fine Selmer group $\Rfine{\Q_{\op{cyc}}}$ are $0$. In particular, $\Rfine{\Q_{\op{cyc}}}$ has finite cardinality.
\end{corollary}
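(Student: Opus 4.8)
The plan is to feed the hypotheses into Wuthrich's leading-term formula (Theorem~\ref{wuthrich thm}) and read off that the characteristic element of $\Rfine{\Q_{\op{cyc}}}$ is a unit of $\Lambda$. First I would check that Theorem~\ref{wuthrich thm} applies: $E$ has potentially good reduction at $p$ and, by assumption, satisfies conditions (1)--(3) of that theorem. Since $\op{rank}E(\Q)\le 1$ by hypothesis (1) of the corollary, Theorem~\ref{wuthrich thm} gives
\[
r=\op{max}\big(0,\,\op{rank}E(\Q)-1\big)=0,
\]
so the characteristic element has the shape $f(T)=a_0+a_1T+\cdots$ with $a_0\in\Z_p$ nonzero, and, as noted right after Theorem~\ref{wuthrich thm}, $\op{Reg}(\Rfine{\Q})=1$. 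Next I would simplify \eqref{leading term} using hypotheses (2), (3), (4) of the corollary: these say precisely that $\prod_{\ell\neq p}c_\ell(E)$ is a $p$-adic unit, that $\#\Zhe_{p^\infty}(E/\Q)=1$, and that $\#\op{Tors}_{\Z_p}(D)=1$, so \eqref{leading term} becomes $a_0\cdot\#J\sim 1$. Since $a_0$ is a nonzero element of $\Z_p$ and $\#J$ is a positive integer, this forces $v_p(a_0)=0$; that is, $a_0\in\Z_p^{\times}$ (and incidentally $p\nmid\#J$).

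The next step is to turn the fact that $a_0$ is a $p$-adic unit into the vanishing of both Iwasawa invariants. Writing $f(T)$, up to a unit of $\Lambda$, in the normal form $p^{\mu}g(T)$ of the Structure Theorem discussion above---with $g$ a distinguished polynomial, $\mu=\mufine$, and $\lafine=\deg g$---evaluation at $T=0$ yields $p^{\mu}g(0)=a_0\in\Z_p^{\times}$, which forces $\mu=0$ and $g(0)\in\Z_p^{\times}$. A distinguished polynomial of positive degree has constant term divisible by $p$, so the latter forces $\deg g=0$; hence $\lafine=0$ as well. Equivalently, a power series with unit constant term is a unit of $\Lambda=\Z_p\llbracket T\rrbracket$, so the characteristic ideal of $\Rfine{\Q_{\op{cyc}}}^{\vee}$ is all of $\Lambda$.

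Finally, for the finiteness assertion I would invoke the Structure Theorem once more. The module $\Rfine{\Q_{\op{cyc}}}^{\vee}$ is finitely generated and torsion over $\Lambda$ (by condition (1) of Theorem~\ref{wuthrich thm}, or by Kato's theorem recalled earlier), so in its pseudo-isomorphism to $\Lambda^{\alpha}\oplus\big(\bigoplus_i\Lambda/(p^{\mu_i})\big)\oplus\big(\bigoplus_j\Lambda/(f_j(T))\big)$ the free rank $\alpha$ vanishes; since $\mu=\sum_i\mu_i=0$ and each $\mu_i>0$ there are no summands of the first kind, and since $\lambda=\sum_j\deg f_j=0$ there are none of the second kind either. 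Hence $\Rfine{\Q_{\op{cyc}}}^{\vee}$ is pseudo-isomorphic to $0$, so it is finite, and dually $\Rfine{\Q_{\op{cyc}}}$ has finite cardinality. The one point that needs care---and the main obstacle in making this airtight---is the denominator $\#J$ in \eqref{leading term}: one has to know it contributes no power of $p$. Above, this comes out automatically because $a_0$ is a genuine nonzero $p$-adic integer and the numerator is a unit; in a variant where $f(T)$ were pinned down only up to $\Frac(\Lambda)$ one would instead need Wuthrich's explicit description of $J$.
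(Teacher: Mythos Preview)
Your proof is correct and follows the same approach as the paper's: apply Theorem~\ref{wuthrich thm} to get $r=0$ and $\op{Reg}=1$, use hypotheses (2)--(4) to reduce \eqref{leading term} to $a_0\in\Z_p^\times$, and conclude that the characteristic element is a unit. Your version is in fact more careful than the paper's, which simply asserts ``the conditions imply that $a_0=1$'' without addressing the $\#J$ denominator; your observation that $a_0\in\Z_p\setminus\{0\}$ and $\#J\in\Z_{>0}$ forces both to be $p$-adic units is the right way to close that small gap.
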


\begin{proof}
It follows from Theorem \ref{wuthrich thm} that $r=0$. Furthermore, the regulator \[\op{Reg}\left(\Rfine{\Q}\right)=1.\] Furthermore, the conditions imply that $a_0=1$, hence the characteristic element of $\Rfine{\Q_{\op{cyc}}}$ is a unit in $\Lambda$. The result follows from this.
\end{proof}
The above result has an interesting consequence from a statistical point of view. Let us first introduce some notation. Given $x>0$, recall that $\pi(x)$ is the number of primes $p\leq x$ and that the prime number theorem states that $\pi(x)\sim \frac{x}{\op{log}(x)}$, i.e., 
\[\lim_{x\rightarrow\infty} \frac{\pi(x)}{\left(x/\op{log}(x)\right)}=1.\]
Given a set of primes $\mathcal{S}$, set $\mathcal{S}(x):=\{p\in \mathcal{S}\mid p\leq x\}$, note that $\# \mathcal{S}(x)\leq \pi(x)$ by definition. 
\begin{definition}
Let $\beta\in [0,1]$ and $\mathcal{S}$ be a set of prime numbers. We say that $\mathcal{S}$ has density $\beta$ if
\[\lim_{x\rightarrow \infty} \frac{\# \mathcal{S}(x)}{\pi(x)}=\beta.\] Note that part of the requirement is that the limit actually exists. 
\end{definition}
Thus, if the set $\mathcal{S}$ has density $1$, we say that $\mathcal{S}$ consists of $100\%$ of primes. If the complement of $\mathcal{S}$ in the set of all primes is finite, then indeed, $\mathcal{S}$ consists of $100\%$ of primes, however, not conversely. Given an elliptic curve $E_{/\Q}$, let $\Pi_{lt,E}$ be the set of \emph{local torsion primes} for $E$, i.e., the set of all primes $p$ such that $E$ has good reduction at $p$ and $E(\Q_p)$ contains a $p$-torsion point. Let $\Pi_{an, E}$ be the set of \emph{anomalous primes}, i.e., primes $p$ at which $E$ has good reduction such that $\widetilde{E}(\F_p)[p]\neq 0$. Here, $\widetilde{E}$ is the reduction of $E$ at $p$. Let $\Pi_E$ be the set of primes $p$ at which $E$ has good reduction such that $\op{Tors}_{\Z_p} D\neq 0$.
\begin{proposition}\label{PiE}
Let $E$ be an elliptic curve over $\Q$, then, $\Pi_{lt,E}\subseteq \Pi_{an, E}$. Furthermore, if $\op{rank}E(\Q)=0$, then, $\Pi_E\subseteq \Pi_{lt, E}$.
\end{proposition}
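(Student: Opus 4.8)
The plan is to establish the two inclusions separately; both rest on the structure of $E(\Q_p)$ at the good odd prime $p$ together with the reduction map $\rho\colon E(\Q_p)\to\widetilde{E}(\F_p)$, whose kernel $E_1(\Q_p)$ is, via the formal logarithm, isomorphic to the additive group $(p\Z_p,+)$ — in particular torsion-free — precisely because $p$ is odd. I also use the standard description of $E(\Q_p)$, for $p$ odd with good reduction, as a topological group: $E(\Q_p)\cong\Z_p\oplus E(\Q_p)_{\tor}$ with $E(\Q_p)_{\tor}$ finite.

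\emph{Step 1: $\Pi_{lt,E}\subseteq\Pi_{an,E}$.} Take $p\in\Pi_{lt,E}$ and a point $P\in E(\Q_p)$ of exact order $p$. Since $E_1(\Q_p)$ is torsion-free it contains no element of order $p$, so $P\notin E_1(\Q_p)=\ker\rho$. Then $\rho(P)$ is a non-zero element of $\widetilde{E}(\F_p)$ with $p\cdot\rho(P)=\rho(pP)=0$, whence $\widetilde{E}(\F_p)[p]\neq 0$ and $p\in\Pi_{an,E}$. This step is immediate; the only substantive input is the torsion-freeness of the kernel of reduction, which is where oddness of $p$ enters.

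\emph{Step 2: $\Pi_E\subseteq\Pi_{lt,E}$ when $\op{rank} E(\Q)=0$.} First unwind $D=D_{E,p}$. As $E(\Q)$ is finite, $E(\Q)\otimes\Z_p=E(\Q)[p^\infty]$. Using the description above, the $p$-adic completion $\varprojlim_n E(\Q_p)/p^nE(\Q_p)$ is isomorphic to $\Z_p\oplus E(\Q_p)[p^\infty]$, the prime-to-$p$ torsion of $E(\Q_p)$ disappearing upon completion. The natural map $E(\Q)[p^\infty]\to\Z_p\oplus E(\Q_p)[p^\infty]$ is zero on the torsion-free summand, hence is the map into $E(\Q_p)[p^\infty]$ induced by $\Q\hookrightarrow\Q_p$, and this is injective. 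Consequently
\[D_{E,p}\;\cong\;\Z_p\oplus\bigl(E(\Q_p)[p^\infty]/E(\Q)[p^\infty]\bigr),\qquad \op{Tors}_{\Z_p}(D_{E,p})\;\cong\;E(\Q_p)[p^\infty]\big/E(\Q)[p^\infty].\]
If $p\in\Pi_E$ then this quotient is non-zero, so $E(\Q_p)[p^\infty]\neq 0$; being a non-trivial finite abelian $p$-group it contains a point of order $p$, and as $E$ has good reduction at $p$ this is precisely the statement $p\in\Pi_{lt,E}$.

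\emph{Expected difficulty.} There is no deep point here: the computation in Step 2 is pure bookkeeping once one has identified the $p$-adic completion of $E(\Q_p)$ — that it retains exactly the $\Z_p$-rank and the $p$-primary torsion of $E(\Q_p)$ — and, underlying Step 1, the ($p$-odd) torsion-freeness of the kernel of reduction. Granting these two facts, both inclusions drop out immediately.
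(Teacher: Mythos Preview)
Your proof is correct and follows essentially the same approach as the paper: both use the torsion-freeness of the kernel of reduction (the formal group $E_0(p\Z_p)\simeq\Z_p$ for odd $p$) to inject $E(\Q_p)[p^\infty]$ into $\widetilde{E}(\F_p)[p^\infty]$ for Step~1, and both deduce Step~2 from the structure of $\widehat{E(\Q_p)}$. Your Step~2 is in fact more explicit than the paper's, which simply asserts that $\op{Tors}_{\Z_p}D=0$ when $E(\Q_p)[p]=0$ without writing out $D\cong\Z_p\oplus\bigl(E(\Q_p)[p^\infty]/E(\Q)[p^\infty]\bigr)$ as you do.
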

\begin{proof}
Let $E$ have good reduction at $p$, in other words, $E$ can be viewed as an elliptic curve over $\op{Spec}\Z_p$. Letting $E_0$ be the formal group of $E$, we have a short exact sequence
\[0\rightarrow E_0(p\Z_p)\rightarrow E(\Q_p)\rightarrow \widetilde{E}(\F_p)\rightarrow 0.\] Note that $E_0(p\Z_p)\simeq \Z_p$ does not contain any $p$-torsion, and therefore, there is an injection $E(\Q_p)[p^\infty]\hookrightarrow \widetilde{E}(\F_p)[p^\infty]$. This shows that if $p$ is a local torsion prime, then $p$ is an anomalous prime, i.e., $\Pi_{lt,E}\subseteq \Pi_{an, E}$. On the other hand, when $\op{rank} E(\Q)=0$ and $E(\Q_p)$ has no nontrivial $p$-torsion, it follows that $\op{Tors}_{\Z_p} D=0$. Therefore, $\Pi_E\subseteq \Pi_{lt,E}$, when the Mordell-Weil rank of $E$ is $0$.
\end{proof}
\begin{corollary}\label{ cor 2.9}
Let $E$ be a non-CM elliptic curve over $\Q$ with $\op{rank} E(\Q)=0$ for which the Tate-Shafarevich group $\Sh(E/\Q)$ is finite. Then, for $100\%$ of primes $p$, \[\mufine=0\text{ and }\lafine=0.\] The set of primes for which $\mufine>0$ or $\lafine>0$ is contained in the set $\Pi_{lt, E}\cup \mathfrak{S}$, where $\mathfrak{S}$ is the set of primes dividing $ 2 \# \Sh(E/\Q)\times \prod_{\ell\neq p} c_\ell(E)$.
\end{corollary}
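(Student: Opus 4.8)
The plan is to deduce both assertions from Corollary~\ref{criterion for mu=lambda=0}. Since $\mufine$ and $\lafine$ are defined only at primes of good reduction, and since $2$ divides $2\,\#\Sh(E/\Q)\prod_{\ell}c_\ell(E)$ and hence belongs to $\mathfrak{S}$, it is enough to fix an odd prime $p$ of good reduction and to check that, provided $p\notin\Pi_{lt,E}\cup\mathfrak{S}$, every hypothesis of Corollary~\ref{criterion for mu=lambda=0} holds; this yields $\mufine=\lafine=0$. Note that good reduction at $p$ gives potentially good reduction at $p$, so that hypothesis is automatic.

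First I would verify the three conditions of Theorem~\ref{wuthrich thm}, which Corollary~\ref{criterion for mu=lambda=0} takes as input. Condition~(1), cotorsion-ness of $\Rfine{\Q_{\op{cyc}}}$ over $\Lambda$, holds for every odd good-reduction $p$, because the fine Selmer group is a subgroup of $\Sel_{p^\infty}(E/\Q_{\op{cyc}})$, which is $\Lambda$-cotorsion by Kato \cite{kato2004p} as recalled above. Since $\op{rank}E(\Q)=0$, the group $E(\Q)$ is finite, so $E(\Q)\otimes\Q_p/\Z_p=0$; hence $\mathcal{M}(E/\Q)=0$ and $\Zhe_{p^\infty}(E/\Q)=\Rfine{\Q}$, which embeds into the finite group $\Sh(E/\Q)[p^\infty]$ and is therefore finite --- this gives condition~(2). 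Condition~(3), nondegeneracy of the cyclotomic height pairing, holds vacuously in Mordell--Weil rank $0$, the pairing being carried by a torsion module. Theorem~\ref{wuthrich thm} then gives $r=\max(0,\op{rank}E(\Q)-1)=0$ and $\op{Reg}(\Rfine{\Q})=1$.

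Next I would run through the four numbered conditions of Corollary~\ref{criterion for mu=lambda=0}. Condition~(1), $\op{rank}E(\Q)\le 1$, is part of the hypotheses. Condition~(2), $p\nmid c_\ell(E)$ for all $\ell\ne p$, can fail only if $p\mid\prod_{\ell}c_\ell(E)$, i.e.\ only if $p\in\mathfrak{S}$. Condition~(3), $\Zhe_{p^\infty}(E/\Q)=0$, is equivalent to $\Sh(E/\Q)[p^\infty]=0$ by Wuthrich's theorem on the fine Tate--Shafarevich group, and fails only for the finitely many $p\mid\#\Sh(E/\Q)$, all lying in $\mathfrak{S}$. Condition~(4), $\op{Tors}_{\Z_p}(D)=0$: by Proposition~\ref{PiE}, since $\op{rank}E(\Q)=0$ we have $\Pi_E\subseteq\Pi_{lt,E}$, so it can fail only when $p\in\Pi_{lt,E}$. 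Hence all hypotheses hold once $p$ is odd, of good reduction, and outside $\Pi_{lt,E}\cup\mathfrak{S}$, and Corollary~\ref{criterion for mu=lambda=0} yields $\mufine=\lafine=0$; this is exactly the stated containment of $\{p:\mufine>0\text{ or }\lafine>0\}$ in $\Pi_{lt,E}\cup\mathfrak{S}$.

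It remains to see that $\Pi_{lt,E}\cup\mathfrak{S}$ has density $0$. The set $\mathfrak{S}$ is finite. By Proposition~\ref{PiE}, $\Pi_{lt,E}\subseteq\Pi_{an,E}$, and $p\in\Pi_{an,E}$ forces $p\mid\#\widetilde{E}(\F_p)=p+1-a_p(E)$, i.e.\ $a_p(E)\equiv1\pmod p$; combined with the Hasse bound $|a_p(E)|\le 2\sqrt p$, this forces $a_p(E)=1$ for all but finitely many such $p$. Since $E$ has no complex multiplication, the set of primes at which $a_p(E)$ takes any fixed value has density $0$ by a theorem of Serre, so $\Pi_{an,E}$, and a fortiori $\Pi_{lt,E}$, has density $0$; consequently the complement of $\Pi_{lt,E}\cup\mathfrak{S}$ has density $1$, which is the ``$100\%$'' assertion. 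The only genuinely non-formal ingredient is this last density-$0$ estimate, resting on Serre's Chebotarev-type bounds for $\#\{p\le x : a_p(E)=1\}$ in the non-CM case; a secondary point worth double-checking is that condition~(3) of Theorem~\ref{wuthrich thm} is indeed automatic when $\op{rank}E(\Q)=0$ rather than an additional hypothesis. Everything else is bookkeeping confirming that each exceptional prime is absorbed by $\mathfrak{S}$ or by $\Pi_{lt,E}$.
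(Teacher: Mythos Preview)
Your proposal is correct and follows essentially the same route as the paper: apply Corollary~\ref{criterion for mu=lambda=0}, check that each of its hypotheses can fail only for primes in $\Pi_{lt,E}\cup\mathfrak{S}$ (using Proposition~\ref{PiE} for the $\op{Tors}_{\Z_p}(D)$ term), and then invoke the density-zero result for anomalous primes to conclude. The paper's proof is terser---it cites \cite{murty1997modular} directly for the density of $\Pi_{an,E}$ rather than reducing to Serre's fixed-trace theorem, and it does not spell out the verification of the hypotheses of Theorem~\ref{wuthrich thm}---but the logical skeleton is the same.
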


\begin{proof}
Since it is assumed that $\Sh(E/\Q)$ is finite, it follows that $\Zhe_{p^\infty}(E/\Q)=0$ for all but finitely many primes. It is also clear (from the fact that $c_\ell(E)$ is a non-zero integer) that $\prod_{\ell\neq p} c_\ell(E)$ is a unit in $\Z_p$ for all but finitely many primes $p$. It is well known that $\Pi_{\op{an}, E}$ has density zero, see \cite{murty1997modular}. Since it is assumed that $\op{rank} E(\Q)=0$, Proposition \ref{PiE} asserts that $\Pi_{E}\subseteq \Pi_{\op{an},E}$, and hence $\Pi_{E}$ has density zero. Therefore, we have shown that the conditions of Corollary \ref{criterion for mu=lambda=0} hold for $100\%$ of primes $p$.
\end{proof}

\section{Statistics for local torsion primes}
Let $E_{/\Q}$ be an elliptic curve. We recall that a prime $p$ at which $E$ has good reduction is said to be \emph{anomalous} if $p\mid \#\widetilde{E}(\F_p)$. Given an elliptic curve $E$, $\Pi_{an,E}$ is the set of anomalous primes for $E$. The following result is due to Greenberg, see \cite[Theorems 4.1 and 5.1]{Gre99}.

\begin{theorem}[Greenberg]
Let $E$ be an elliptic curve over $\Q$ without complex multiplication such that $\op{rank} E(\Q)=0$ and $\Sh(E/\Q)$ is finite. Then, for $100\%$ of primes $p$, the $p$-primary Selmer group $\op{Sel}_{p^\infty}(E/\Q_{\op{cyc}})$ has $\mu=0$ and $\lambda=0$. The set of primes $p$ at which either $\mu>0$ or $\lambda>0$ is contained in $\Pi_{an,E}\cup \mathfrak{S}$, where $\mathfrak{S}$ is the finite set of primes dividing $2\#\Sh(E/\Q) \prod_{\ell\neq p} c_\ell(E)$. Furthermore, the set of primes at which $\mu>0$ or $\lambda>0$ is infinite if and only if $\Pi_{an,E}$ is infinite.
\end{theorem}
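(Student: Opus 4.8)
The plan is to deduce the assertions from Greenberg's computation of the $\Gamma$-Euler characteristic of $\op{Sel}_{p^\infty}(E/\Q_{\op{cyc}})$, combined with the standard dictionary relating a trivial Euler characteristic to the vanishing of the Iwasawa invariants. First I would dispose of the finitely many primes of bad reduction and, invoking Serre's theorem that a non-CM elliptic curve has supersingular reduction on a density-zero set of primes, restrict attention to primes $p$ of good \emph{ordinary} reduction, which form a set of density $1$; by Kato's theorem these are moreover the only primes at which $\op{Sel}_{p^\infty}(E/\Q_{\op{cyc}})^\vee$ is $\Lambda$-torsion, hence the only primes at which $\mu_p(E/\Q_{\op{cyc}})$ and $\lambda_p(E/\Q_{\op{cyc}})$ are even defined. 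I would put $2$ and the bad primes into the finite set $\mathfrak{S}$. Note that every anomalous prime is automatically a prime of good ordinary reduction, so this reduction costs nothing for the final assertion.

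Fix such a $p$. Since $\op{rank} E(\Q)=0$ and $\Sh(E/\Q)$ is finite, $\op{Sel}_{p^\infty}(E/\Q)$ is finite, and I would apply Mazur's control theorem in the ordinary case: the restriction map $\op{Sel}_{p^\infty}(E/\Q)\to \op{Sel}_{p^\infty}(E/\Q_{\op{cyc}})^{\Gamma}$ has finite kernel and cokernel whose orders are given by explicit local terms at the primes of $S$. Inserting this into the computation of the Euler characteristic $\chi\big(\Gamma,\op{Sel}_{p^\infty}(E/\Q_{\op{cyc}})\big)=\#H^0\big(\Gamma,\op{Sel}_{p^\infty}(E/\Q_{\op{cyc}})\big)\big/\#H^1\big(\Gamma,\op{Sel}_{p^\infty}(E/\Q_{\op{cyc}})\big)$ yields Greenberg's formula, which up to a $p$-adic unit reads
\[
\chi\big(\Gamma,\op{Sel}_{p^\infty}(E/\Q_{\op{cyc}})\big)\ \sim\ \#\Sh(E/\Q)[p^\infty]\cdot\Big(\prod_{\ell\neq p}c_\ell(E)\Big)^{(p)}\cdot\frac{\#\widetilde{E}(\F_p)[p^\infty]^2}{\#E(\Q)[p^\infty]^2},
\]
where $(\,\cdot\,)^{(p)}$ denotes the $p$-part; in particular this Euler characteristic is finite. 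I would then invoke the algebraic fact that, for a $\Lambda$-cotorsion module $M$ with characteristic element $f_M(T)=p^{\mu}g_M(T)$, one has $\chi(\Gamma,M)=|f_M(0)|_p^{-1}$ whenever $f_M(0)\neq 0$, and that $f_M(0)$ is a $p$-adic unit if and only if $\mu=0$ and $\deg g_M=0$ — that is, if and only if $M$ is finite and $\mu_p(E/\Q_{\op{cyc}})=\lambda_p(E/\Q_{\op{cyc}})=0$ (here one uses that a finitely generated torsion $\Lambda$-module with trivial characteristic ideal is pseudo-isomorphic to $0$, hence finite).

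With these inputs the first two assertions are bookkeeping: $E(\Q)[p^\infty]=0$ and $\Sh(E/\Q)[p^\infty]=0$ for all but finitely many $p$, the fixed integers $c_\ell(E)$ are divisible by $p$ only for finitely many $p$, and $\#\widetilde{E}(\F_p)[p^\infty]=1$ exactly when $p\notin\Pi_{an,E}$. Hence for $p\notin\Pi_{an,E}\cup\mathfrak{S}$, with $\mathfrak{S}$ the finite set of primes dividing $2\,\#\Sh(E/\Q)\prod_{\ell\neq p}c_\ell(E)$ (and the bad primes, absorbed above), the right-hand side is a $p$-adic unit, so $\op{Sel}_{p^\infty}(E/\Q_{\op{cyc}})$ is finite and $\mu_p(E/\Q_{\op{cyc}})=\lambda_p(E/\Q_{\op{cyc}})=0$; since $\Pi_{an,E}$ has density $0$ by \cite{murty1997modular}, this holds for $100\%$ of primes. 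For the last assertion, one direction is the containment just proved: if $\Pi_{an,E}$ is finite then so is the exceptional set. For the converse, suppose $p\in\Pi_{an,E}$ avoids the finitely many primes dividing $2\,\#\Sh(E/\Q)\,\#E(\Q)_{\tor}\prod_{\ell}c_\ell(E)$ and the bad primes; then in the displayed formula every factor except $\#\widetilde{E}(\F_p)[p^\infty]^2$ is a $p$-adic unit while $\#\widetilde{E}(\F_p)[p^\infty]^2=p^{2k}$ with $k\geq 1$, so $f_M(0)$ is not a unit and therefore $\mu_p(E/\Q_{\op{cyc}})>0$ or $\lambda_p(E/\Q_{\op{cyc}})>0$. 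As only finitely many primes are excluded, an infinite $\Pi_{an,E}$ forces infinitely many exceptional primes.

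The step I expect to be the main obstacle, were one proving this afresh rather than quoting \cite{Gre99}, is the control theorem in the good ordinary case together with the precise identification of the local correction factors at $p$ and at the bad primes that produces the Euler characteristic formula in the shape displayed above; everything downstream of that is the structure theory of finitely generated $\Lambda$-modules and an elementary density count.
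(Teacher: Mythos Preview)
Your proposal is correct and reconstructs exactly the argument that the paper is invoking: the paper gives no proof of its own, merely citing \cite[Theorems 4.1 and 5.1]{Gre99}, and your sketch via Mazur's control theorem, Greenberg's Euler characteristic formula, and the dictionary $\chi(\Gamma,M)=|f_M(0)|_p^{-1}$ is precisely the content of that reference. One cosmetic remark: the non-cotorsion of $\op{Sel}_{p^\infty}(E/\Q_{\op{cyc}})$ at supersingular primes is not due to Kato (Kato supplies the cotorsion at ordinary primes); it is a classical observation going back to Schneider and Greenberg, but this does not affect your argument.
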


 \par  Recall that for $x>0$, the set $\Pi_{an, E}(x)$ is the subset of $\Pi_{an,E}$ consisting of primes $p\leq x$. There are results about the asymptotic growth of $\#\Pi_{an, E}(x)$ as a function in $x$. The conjecture of Lang and Trotter \cite{lang2006frobenius} predicts that there is a constant $C\geq 0$ (depending on $E$ and independent of $p$) such that 
\begin{equation}\label{LTconj}\#\Pi_{an, E}(x)\sim C \frac{\sqrt{x}}{\op{log} x}.\end{equation}
This is only conjectural, in fact, the best known upper bound, due to V.~K.~Murty (see \cite{murty1997modular}) asserts that
\[\#\Pi_{an, E}(x)\ll \frac{x(\log \log x)^2}{(\log x)^2}.\] In \eqref{LTconj}, the constant $C$ may actually be $0$, and there are examples of elliptic curves $E$ such that the entire set of anomalous primes $\Pi_{an,E}$ is finite. Such elliptic curves are known as \emph{finitely anomalous curves}, and were studied by Ridgdill in her thesis \cite{ridgdill2010frequency}. 
\par In studying the fine Selmer group, anomalous primes no longer play a role since $\widetilde{E}(\F_p)$ is not a term in the Euler characteristic formula for the leading term of the characteristic element. Instead we turn our attention to local torsion primes. Let $E$ be an elliptic curve over $\Q$, and $p$ a prime number. Recall that \[D=D(E,p):=\op{cok} \left(E(\Q)\otimes \Z_p\longrightarrow \widehat{E(\Q_p)}\right),\] where $\widehat{E(\Q_p)}$ is the $p$-adic completion of $E(\Q_p)$. The torsion subgroup $\op{Tors}_{\Z_p}D$ features in the numerator of \eqref{leading term}. The following conjecture on the distribution of local torsion primes is supported by heuristics.
\begin{conjecture}[David-Weston]\label{conj dw}
Let $E$ be an elliptic curve over $\Q$ without complex-multiplication. Then, the set of local torsion primes is finite. 
\end{conjecture}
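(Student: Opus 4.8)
The plan is to explain the heuristic behind Conjecture~\ref{conj dw}; the statement is genuinely a conjecture, so what follows is a justification rather than a proof, and I will pin down at the end exactly where rigour stops. The starting point is Proposition~\ref{PiE}: $\Pi_{lt,E}\subseteq\Pi_{an,E}$, so a local torsion prime is in particular anomalous, i.e. $p\mid\#\widetilde{E}(\F_p)$; hence if $E$ is finitely anomalous there is nothing to prove, and we may assume $\Pi_{an,E}$ is infinite. For $p\geq 7$ the Hasse bound forces an anomalous $p$ to satisfy $a_p=1$ (the only integer $\equiv 1\pmod p$ lying in $(-2\sqrt p,2\sqrt p)$), and since $a_p\not\equiv 0\pmod p$ the curve has good \emph{ordinary} reduction at $p$; writing $\alpha_p\in\Zp$ for the unit root of $X^2-a_pX+p$, one has $\alpha_p\equiv a_p=1\pmod p$.

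The next step is to rephrase ``$E(\Qp)[p]\neq 0$'' as a one-dimensional cohomological vanishing. Since $E$ has good reduction, $E[p]$ prolongs to a finite flat group scheme over $\Zp$, and its connected--\'etale sequence exhibits $E[p]$, as a $\Gal(\overline{\Qp}/\Qp)$-module, as an extension
\begin{equation*}
0\longrightarrow\mu_p\longrightarrow E[p]\longrightarrow\F_p\longrightarrow 0,
\end{equation*}
in which the sub is $\mu_p$ (the generic fibre of the formal-group part) and the quotient is unramified with Frobenius eigenvalue $\alpha_p\equiv 1\pmod p$, hence is the trivial character. For $p\geq 3$ we have $H^0(\Qp,\mu_p)=0$, so $E(\Qp)[p]=E[p]^{\Gal(\overline{\Qp}/\Qp)}$ is nonzero precisely when this extension splits, i.e. precisely when its class $\xi_{E,p}$ vanishes. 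As the connected--\'etale sequence consists of finite flat group schemes over $\Zp$, the class $\xi_{E,p}$ lies not in all of $H^1(\Qp,\mu_p)=\Qp^\times/(\Qp^\times)^p$ but in the flat subgroup $H^1_f(\Qp,\mu_p)=\Zp^\times/(\Zp^\times)^p\cong\F_p$, which is one-dimensional over $\F_p$. Thus, for $p\geq 7$,
\begin{equation*}
p\in\Pi_{lt,E}\iff a_p=1\ \text{ and }\ \xi_{E,p}=0\ \text{ in }\ H^1_f(\Qp,\mu_p)\cong\F_p.
\end{equation*}

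Now the counting heuristic. Sato--Tate (a theorem for $E$ without complex multiplication) suggests, and the Lang--Trotter conjecture predicts, that the single value $a_p=1$ is attained with frequency $\asymp C_E\,p^{-1/2}$ for an explicit constant $C_E\geq 0$. Conditionally on $p$ being anomalous, modelling $\xi_{E,p}$ as equidistributed in the one-dimensional group $H^1_f(\Qp,\mu_p)$ gives $\xi_{E,p}=0$ with ``probability'' $1/p$. Treating the two events as independent,
\begin{equation*}
\op{Prob}\bigl[\,p\in\Pi_{lt,E}\,\bigr]\ \approx\ \frac{C_E}{p^{1/2}}\cdot\frac1p\ =\ \frac{C_E}{p^{3/2}},
\end{equation*}
so the expected number of local torsion primes, $\sum_p C_E\,p^{-3/2}$, is finite, and the convergence half of the Borel--Cantelli lemma predicts that $\Pi_{lt,E}$ is itself finite. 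The non-CM hypothesis is essential here: for a CM curve with good ordinary reduction at a split prime $p$, $E[p]$ is already a direct sum of two characters over $\Qp$, so there is no extension class, the factor $1/p$ disappears, and the resulting probabilities $\asymp p^{-1/2}$ are not summable --- in agreement with the known existence of CM curves with infinitely many local torsion primes.

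The main obstacle is not where one might first look. The convergence needed is in fact unconditional: by V.~K.~Murty's bound $\#\Pi_{an,E}(x)\ll x(\log\log x)^2/(\log x)^2$ \cite{murty1997modular} together with partial summation, $\sum_{p\in\Pi_{an,E}}1/p<\infty$, so $\sum_{p\in\Pi_{an,E}}\op{Prob}[\xi_{E,p}=0]<\infty$ and even the Lang--Trotter input is unnecessary for summability. The genuine difficulty is the gap between the probabilistic model and the fixed curve $E$: the classes $\xi_{E,p}$, as $p$ runs over the (deterministic) anomalous primes, form a perfectly deterministic sequence in the groups $H^1_f(\Qp,\mu_p)$, and at present there is no handle on it --- no equidistribution statement, no effective nonvanishing criterion --- from which one could conclude that $\xi_{E,p}\neq 0$ for all large anomalous $p$. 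Excluding infinitely many local torsion primes thus amounts to showing that a single deterministic sequence avoids the measure-zero bad event of the model, and for that only heuristics are available, exactly as for Lang--Trotter-type questions in general. The statement therefore remains a conjecture; what can honestly be offered is the heuristic above, reinforced by numerical evidence and by the deformation-theoretic analysis of David and Weston, which gives structural meaning to the ``randomness'' of $\xi_{E,p}$.
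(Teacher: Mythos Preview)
Your heuristic is correct and follows the same two–step probabilistic model as the paper: a local torsion prime $p\geq 7$ forces $a_p=1$, which should occur with frequency $\asymp p^{-1/2}$; conditionally on this, local torsion should occur with probability $1/p$; and $\sum_p p^{-3/2}<\infty$. The paper justifies the $1/p$ factor by quoting David--Weston's deformation count (for fixed $\widetilde{E}/\F_p$ with $j\neq 0,1728$, exactly $p$ out of $p^2$ lifts to $\Z/p^2$ carry a $p$-torsion point), whereas you obtain it by rewriting the condition $E(\Qp)[p]\neq 0$ as the vanishing of an extension class $\xi_{E,p}\in H^1_f(\Qp,\mu_p)\cong\F_p$ via the connected--\'etale sequence. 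These are two faces of the same coin, but your cohomological formulation is more intrinsic and makes transparent both why the relevant ``sample space'' is one-dimensional and why the non-CM hypothesis matters (in the CM ordinary case the sequence already splits and the $1/p$ saving disappears). Your further remark that Murty's bound $\#\Pi_{an,E}(x)\ll x(\log\log x)^2/(\log x)^2$ already gives $\sum_{p\in\Pi_{an,E}}1/p<\infty$, so that summability does not even require the Lang--Trotter input, is a genuine sharpening of the heuristic presentation and is not in the paper. Your identification of the non-rigorous step---no control on the deterministic sequence $(\xi_{E,p})_{p\in\Pi_{an,E}}$---is exactly right.
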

The following heuristic is taken from \cite[p.1]{davidweston2008}. Suppose $p$ is a local torsion prime, i.e., $E(\Q_p)[p]\neq 0$ such that $E$ has good reduction at $p$. This in particular implies that $E(\F_p)[p]\neq 0$. If $p\geq 7$, then it follows from the Weil bound that $a_p(E)=1$. On the other hand, $a_p(E)=1$ occurs with probability $\frac{1}{4\sqrt{p}}$. Given an elliptic curve $\widetilde{E}$ over $\F_p$ satisfying some additional conditions, $1$ out of every $p$ lifts $E$ of $\widetilde{E}$ to $\Z_p$ has a local torsion point, see \cite[Corollary 3.5]{davidweston2008}. The sum $\sum_n \frac{1}{4\sqrt{p}\cdot p}=\frac{1}{4}\sum_p p^{-3/2}$ is finite.

\par On the other hand, Conjecture 1.2 in \cite{wuthrich2007iwasawa} states that the fine Selmer group $\Rfine{\Q_{\op{cyc}}}$ associated to an elliptic curve with $\op{rank} E(\Q)\leq 1$ is finite for all but finitely many primes $p$. More generally, the $\Z_p$-corank of $\Rfine{\Q_{\op{cyc}}}$ is expected to be equal to the $\Z_p$-corank of $\Rfine{\Q}$ for all but finitely many primes $p$. We state the conjecture slightly differently from its original formulation.

\begin{conjecture}[Wuthrich] \label{conj wuthrich}
Let $E$ be an elliptic curve over $\Q$, then for all but finitely many primes $p$ of good reduction, the fine Selmer group $\Rfine{\Q_{\op{cyc}}}$ satisfies $\mu=0$ and $\lambda=\op{max}\left(\op{rank} E(\Q)-1, 0\right)$.  
\end{conjecture}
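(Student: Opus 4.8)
The plan is to reduce Conjecture \ref{conj wuthrich} to Wuthrich's leading-term formula (Theorem \ref{wuthrich thm}) and the vanishing criterion it yields (Corollary \ref{criterion for mu=lambda=0}), and then to show that, for a fixed $E$, each hypothesis appearing in those statements can fail for at most finitely many primes $p$ of good reduction. Since a prime of good reduction is a fortiori a prime of potentially good reduction, Theorem \ref{wuthrich thm} and Corollary \ref{criterion for mu=lambda=0} are available throughout.

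The heart of the matter is the case $\op{rank} E(\Q) \le 1$, for which the predicted invariants are $\mu_p^{\op{fn}}(E/\Q_{\op{cyc}}) = 0$ and $\lambda_p^{\op{fn}}(E/\Q_{\op{cyc}}) = \op{max}(\op{rank} E(\Q) - 1, 0) = 0$, exactly the conclusion of Corollary \ref{criterion for mu=lambda=0}. Thus it suffices to verify, for all but finitely many $p$, the three hypotheses of Theorem \ref{wuthrich thm} and the four hypotheses of Corollary \ref{criterion for mu=lambda=0}. Hypothesis (1) of Theorem \ref{wuthrich thm} (cotorsion) is automatic from Kato's theorem, recorded above. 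Assuming $\Sh(E/\Q)$ is finite --- a hypothesis I would carry throughout, it being part of the Birch--Swinnerton-Dyer conjecture --- the group $\Zhe_{p^\infty}(E/\Q)$ is a subgroup of $\Sh(E/\Q)[p^\infty]$ and hence vanishes for all but finitely many $p$, which settles hypothesis (2) of Theorem \ref{wuthrich thm} and hypothesis (3) of Corollary \ref{criterion for mu=lambda=0} simultaneously. Each Tamagawa number $c_\ell(E)$ is a fixed nonzero integer, so $p \mid \prod_{\ell \ne p} c_\ell(E)$ only finitely often, which is hypothesis (2) of Corollary \ref{criterion for mu=lambda=0}. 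For hypothesis (4), that $\op{Tors}_{\Z_p}(D) = 0$: when $\op{rank} E(\Q) = 0$ this holds for every $p \notin \Pi_{lt,E}$ by Proposition \ref{PiE}, and when $\op{rank} E(\Q) = 1$ the formal-group exact sequence used there again reduces it to the local conditions that $E(\Q_p)[p] = 0$ and that a generator of the free part of $E(\Q)$ is not $p$-divisible in $\widehat{E(\Q_p)}$ --- conditions expected to fail only finitely often. The remaining input, hypothesis (3) of Theorem \ref{wuthrich thm} (nondegeneracy of the cyclotomic fine height pairing), is expected but conjectural. In sum, for $\op{rank} E(\Q) \le 1$ the conjecture would follow from the David--Weston Conjecture \ref{conj dw} on finiteness of $\Pi_{lt,E}$, finiteness of $\Sh(E/\Q)$, and nondegeneracy of the fine height pairing.

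The two remaining regimes each need a new ingredient. For CM curves the David--Weston heuristic does not apply --- at primes $p$ split in the CM field $K$ the reduction is ordinary with $a_p(E) \ne 1$, so $\Pi_{lt,E}$ can be infinite --- and I would instead use that $\Q(E[p^\infty])$ is abelian over $K$, so that the analogue of the Ferrero--Washington theorem for abelian extensions of imaginary quadratic fields forces $\mu_p(F_{\op{cyc}}/F) = 0$ and then Theorem \ref{class group relation to conjecture A} gives $\mu_p^{\op{fn}} = 0$ for all $p$; the identity $\lambda_p^{\op{fn}} = \op{max}(\op{rank} E(\Q) - 1, 0)$ would then be read off Wuthrich's formula via an explicit computation of $\op{Tors}_{\Z_p}(D)$ at split and inert primes. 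For $\op{rank} E(\Q) \ge 2$, a set of curves of density zero but not empty, Wuthrich's formula no longer forces $\op{Reg}(\Rfine{\Q}) = 1$, so one must additionally prove the fine $p$-adic height regulator is a $p$-adic unit for all but finitely many $p$, and that the characteristic element carries no extraneous $p$-divisibility, so that $\lambda_p^{\op{fn}} = r = \op{rank} E(\Q) - 1$ on the nose.

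The main obstacle is therefore not any single lemma but the confluence of currently inaccessible inputs: the David--Weston conjecture on finiteness of local torsion primes, nondegeneracy of the fine cyclotomic height pairing, and --- in the higher-rank case --- nonvanishing and integrality of the fine $p$-adic regulator, a statement of the same depth as the nonvanishing of the classical cyclotomic $p$-adic regulator, all of this resting in turn on finiteness of $\Sh(E/\Q)$. Accordingly, the realistic deliverable is the conditional statement: granting these hypotheses, Conjecture \ref{conj wuthrich} holds for every $E/\Q$ with $\op{rank} E(\Q) \le 1$; the unconditional conjecture appears to lie beyond present techniques.
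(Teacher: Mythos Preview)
The statement you were asked to prove is labeled a \emph{Conjecture} in the paper, not a theorem; the paper offers no proof of it, and indeed none is known. What the paper does contain is the much more modest Proposition immediately following the conjecture, which shows that in the special case $\op{rank} E(\Q)=0$, the David--Weston Conjecture \ref{conj dw} implies Conjecture \ref{conj wuthrich}. Your write-up recovers exactly this implication as a subcase, and then goes further by sketching the additional conditional inputs (finiteness of $\Sh$, nondegeneracy of the fine height pairing, integrality of the fine regulator in higher rank) one would need in the remaining regimes. You yourself conclude that ``the unconditional conjecture appears to lie beyond present techniques,'' which is the correct assessment: what you have produced is a survey of the obstructions, not a proof, and there is no proof in the paper to compare it to.

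One substantive caution about your CM paragraph: you invoke ``the analogue of the Ferrero--Washington theorem for abelian extensions of imaginary quadratic fields.'' No such theorem is available in general; Ferrero--Washington is proved only for abelian extensions of $\Q$, and the imaginary-quadratic analogue is itself open (partial results of Gillard and Schneps exist for split primes, but nothing uniform). So even your conditional CM argument rests on an unproved input you did not flag. The remark that split primes have $a_p(E)\neq 1$ is also not correct as stated; ordinary reduction at a split prime does not preclude $a_p=1$.
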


We specialize to the case when $\op{rank} E(\Q)=0$ to obtain the following easy observation.

\begin{proposition}
Suppose that $E$ is an elliptic curve over $\Q$ with $\op{rank} E(\Q)=0$. Assume that the following conditions are satisfied
\begin{enumerate}
\item $E$ does not have complex multiplication,
\item Conjecture \ref{conj dw} holds for $E$,
\item $\Sh(E/\Q)$ is finite.
\end{enumerate}
Then, Conjecture \ref{conj wuthrich} holds for $E$.
\end{proposition}
\begin{proof}
Note that $\op{Reg}\left(\Rfine{\Q}\right)=1$ since $\op{rank} E(\Q)=0$. The conditions of Theorem \ref{wuthrich thm} are satisfied. Since $\op{rank} E(\Q)=0$, the order of vanishing of $f(T)=\op{char} \left(\Rfine{\Q_{\op{cyc}}}\right)$ at $T=0$ is $0$. We appeal to Wuthrich's formula \eqref{leading term} for the leading term $a_0$. Note that $a_0$ is a unit in $\Z_p$ if and only if $\mufine=0$ and $\lafine=0$. Conjecture \ref{conj wuthrich} in this context states that $a_0=a_0(E,p)$ is a $p$-adic unit for all but finitely many primes $p$. By Corollary \ref{ cor 2.9}, $a_0$ is a $p$-adic unit if an only if 
\begin{enumerate}
 \item $p\nmid c_\ell(E)$ for all primes $\ell\neq p$,
    \item $\Zhe_{p^\infty} (E/\Q)=0$,
    \item $\op{Tors}_{\Z_p}(D)=0$.
\end{enumerate}
Theorem \ref{theorem 2.5} implies that $\Zhe_{p^\infty} (E/\Q)=0$ if and only if $\Sh(E/\Q)[p^\infty]=0$. There are only finitely many primes $p$ which divide the Tamagawa product. Since it is assumed that $\Sh(E/\Q)$ is finite, it follows that $\Zhe_{p^\infty} (E/\Q)=0$ for all but finitely many primes. If the set of local torsion primes is finite, then, $\op{Tors}_{\Z_p}(D)=0$ for all but finitely many primes $p$. As a result, the above conditions imply that Conjecture \ref{conj wuthrich} holds for $E$.
\end{proof}
Let $E$ be an elliptic curve over $\Q$ with good reduction at $p$, in other words, $E_{/\Q_p}$ admits a smooth local model $\mathcal{E}_{/\Z_p}$. Given a finitely generated abelian group $M$, we set $\op{rank}_pM:=\op{dim}\left(M\otimes \F_p\right)$ to be the $p$-rank of $M$.

\begin{lemma}\label{init lemma}
Let $\mathcal{E}$ be an elliptic curve over $\Z_p$, then, 
\[\op{rank}_p \left(\mathcal{E}(\Z/p^2\Z)\right)=\begin{cases} 1 \text{ if }E(\Q_p)[p]=0;\\
2\text{ if }E(\Q_p)[p]\neq 0.
\end{cases}\]
\end{lemma}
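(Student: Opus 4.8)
The plan is to realize $\mathcal E(\Z/p^2\Z)$ as a quotient $E(\Q_p)/N$ of $E(\Q_p)=\mathcal E(\Z_p)$ by a subgroup $N$ contained in $p\,E(\Q_p)$. Granting this, $\mathcal E(\Z/p^2\Z)\otimes_{\Z}\F_p=E(\Q_p)/(N+pE(\Q_p))=E(\Q_p)/pE(\Q_p)$, so it suffices to show that $\dim_{\F_p}\bigl(E(\Q_p)/pE(\Q_p)\bigr)$ equals $1$ when $E(\Q_p)[p]=0$ and $2$ when $E(\Q_p)[p]\neq 0$. (Throughout, $p$ is odd, as is assumed elsewhere in the paper.)

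First I would set up the reduction map. Since $\mathcal E$ is smooth over $\Z_p$, Hensel's lemma makes $E(\Q_p)=\mathcal E(\Z_p)\to\mathcal E(\Z/p^2\Z)$ surjective, with kernel the formal-group subgroup $N:=E_0(p^2\Z_p)$ (notation as in the proof of Proposition~\ref{PiE}); likewise the kernel of $E(\Q_p)\to\widetilde E(\F_p)=\mathcal E(\Z/p\Z)$ is $E_0(p\Z_p)$. Hence $N\subseteq E_0(p\Z_p)$ with quotient $E_0(p\Z_p)/E_0(p^2\Z_p)\cong p\Z_p/p^2\Z_p\cong\F_p$, so $N$ has index $p$ in $E_0(p\Z_p)$. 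For $p$ odd the formal logarithm identifies $E_0(p\Z_p)$ topologically with $\Z_p$, which has a unique subgroup of index $p$; therefore $N=p\,E_0(p\Z_p)\subseteq p\,E(\Q_p)$, as required.

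Next I would compute $\dim_{\F_p}\bigl(E(\Q_p)/pE(\Q_p)\bigr)$ from the reduction exact sequence
\[
0\longrightarrow E_0(p\Z_p)\longrightarrow E(\Q_p)\longrightarrow \widetilde E(\F_p)\longrightarrow 0 .
\]
Since $E_0(p\Z_p)\cong\Z_p$ is $\Z$-flat, applying $-\otimes_{\Z}\F_p$ gives the exact sequence
\[
0\to E(\Q_p)[p]\to \widetilde E(\F_p)[p]\to E_0(p\Z_p)/p\to E(\Q_p)/p\to \widetilde E(\F_p)/p\to 0 .
\]
With $\dim_{\F_p}E_0(p\Z_p)/p=1$ and $\dim_{\F_p}\widetilde E(\F_p)[p]=\dim_{\F_p}\widetilde E(\F_p)/p$, taking the alternating sum of dimensions yields $\dim_{\F_p}\bigl(E(\Q_p)/p\bigr)=\dim_{\F_p}E(\Q_p)[p]+1$. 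Finally, $E(\Q_p)[p]$ injects into $\widetilde E(\F_p)[p]$ by the first map above, and $\widetilde E(\F_p)[p]$ is cyclic since an elliptic curve over $\overline{\F_p}$ has $\overline{\F_p}$-rational $p$-torsion isomorphic to $0$ or $\Z/p\Z$. Hence $\dim_{\F_p}E(\Q_p)[p]\in\{0,1\}$, and it equals $1$ precisely when $E(\Q_p)[p]\neq 0$. Combining, $\op{rank}_p\mathcal E(\Z/p^2\Z)=\dim_{\F_p}\bigl(E(\Q_p)/p\bigr)$ equals $1$ if $E(\Q_p)[p]=0$ and $2$ if $E(\Q_p)[p]\neq 0$.

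The load-bearing step, and the one deserving care, is the inclusion $N\subseteq p\,E(\Q_p)$: it rests on $E_0(p\Z_p)$ being torsion-free and topologically $\cong\Z_p$, which for elliptic curves over the unramified base $\Z_p$ is exactly the assertion that the formal logarithm converges to an isomorphism already on $E_0(p\Z_p)$, valid because $p$ is odd (the convergence bound requires $1>\tfrac{1}{p-1}$). For $p=2$ this bound fails at level $p$, so the argument needs modification there; since the paper assumes $p$ odd I do not pursue it. Everything else — Hensel surjectivity of reduction, the identification of its kernel with a step of the formal-group filtration together with the graded quotients $E_0(p^n\Z_p)/E_0(p^{n+1}\Z_p)\cong\F_p$, flatness of $\Z_p$ over $\Z$, and the shape of the $p$-torsion of an elliptic curve over $\overline{\F_p}$ — is standard.
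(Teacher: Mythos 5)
Your argument is correct. Note, however, that the paper does not prove this lemma at all: it simply quotes \cite[Lemma 3.1]{davidweston2008}, so your proposal supplies the argument that the paper (and, in essence, that reference) leaves to the standard theory of formal groups. Your route is the natural one: identify the kernel of $\mathcal{E}(\Z_p)\to\mathcal{E}(\Z/p^2\Z)$ with the formal-group subgroup $\widehat{E}(p^2\Z_p)$, use the graded piece $\widehat{E}(p\Z_p)/\widehat{E}(p^2\Z_p)\cong\F_p$ together with the formal logarithm isomorphism $\widehat{E}(p\Z_p)\cong\Z_p$ (valid precisely because $p$ is odd, as you correctly flag) to get $\widehat{E}(p^2\Z_p)=p\,\widehat{E}(p\Z_p)\subseteq pE(\Q_p)$, and then compute $\dim_{\F_p}E(\Q_p)/pE(\Q_p)$ from the six-term $\mathrm{Tor}$ sequence attached to $0\to\widehat{E}(p\Z_p)\to E(\Q_p)\to\widetilde{E}(\F_p)\to 0$, using torsion-freeness of the formal group and the fact that $\widetilde{E}(\F_p)[p]$ is cyclic. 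All steps check out (surjectivity of reduction by smoothness and Hensel, the identification of the kernel of reduction mod $p^n$ with $\widehat{E}(p^n\Z_p)$, the uniqueness of the index-$p$ subgroup of $\Z_p$, and the alternating-sum count, all terms being finite). The only difference from the paper is one of packaging: the paper outsources exactly this computation to David--Weston, whose proof proceeds along the same reduction-filtration lines, so what your write-up buys is self-containedness and an explicit indication of where the hypothesis that $p$ is odd enters.
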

\begin{proof}
The assertion is a special case of \cite[Lemma 3.1]{davidweston2008}. 
\end{proof}

Given a pair $(a,b)$, we let $E_{a,b}$ be the elliptic curve defined by $y^2=x^3+ax+b$.

\begin{proposition}\label{fibers order p}
Let $E_{a,b}$ be an elliptic curve associated to a pair $(a,b)\in \F_p\times \F_p$ and such that the j-invariant $j(E_{a,b})\neq 0,1728$. Then, there are $p$ distinct pairs $(A_i, B_i)\in \Z/p^2\times \Z/p^2$ such that $(A_i, B_i)\equiv (a,b)\mod{p}$ and \[\op{rank}_p E_{A_i, B_i}(\Z/p^2)=2.\]
\end{proposition}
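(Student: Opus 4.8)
The plan is to convert the condition $\op{rank}_p E_{A,B}(\Z/p^2\Z)=2$ into the vanishing of a single $\F_p$-valued obstruction attached to each lift $(A,B)$ of $(a,b)$, to check that this obstruction depends affine-linearly on the lift, and then to invoke the nondegeneracy computation of David--Weston. Note first that the statement can only hold when $p\mid\#\widetilde E_{a,b}(\F_p)$: otherwise $E_{A,B}(\Q_p)[p]\hookrightarrow\widetilde E_{a,b}(\F_p)[p^\infty]=0$ by Proposition~\ref{PiE}, so by Lemma~\ref{init lemma} no lift has $p$-rank $2$; thus $p\mid\#\widetilde E_{a,b}(\F_p)$ must be implicit in the statement and we assume it. The Hasse bound then forces $\widetilde E_{a,b}(\F_p)[p]$ to be cyclic of order $p$, and we fix a generator $P_0$ (necessarily not $2$-torsion, so its $y$-coordinate is nonzero).

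\textbf{Set-up and the obstruction.} Fix integer lifts $\widetilde a,\widetilde b$ of $a,b$; every lift of $(a,b)$ to $\Z/p^2\Z$ is $(A,B)=(\widetilde a+ps,\,\widetilde b+pt)$ for a unique $(s,t)\in\F_p^2$, giving $p^2$ lifts in all, distinct for distinct $(s,t)$. For each lift, reduction modulo $p$ gives a short exact sequence $0\to K\to E_{A,B}(\Z/p^2\Z)\to\widetilde E_{a,b}(\F_p)\to 0$, where $K$ is the kernel of reduction: it is the formal group evaluated at $p\Z/p^2\Z$, so $K\cong(\Z/p\Z,+)$ and $pK=0$. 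Lifting the $x$-coordinate of $P_0$ arbitrarily and solving for the $y$-coordinate by Hensel's lemma (possible since $P_0$ is not $2$-torsion) produces a lift $\widetilde P_0\in E_{A,B}(\Z/p^2\Z)$ of $P_0$; any two such lifts differ by an element of $K$, which is $p$-torsion, so $p\widetilde P_0\in K$ is independent of the choice of $\widetilde P_0$. Identifying $K\cong\F_p$, write $\delta(s,t)\in\F_p$ for this element — it is exactly the connecting homomorphism of the sequence above applied to $P_0$. The snake lemma yields $\op{rank}_p E_{A,B}(\Z/p^2\Z)=1+\dim_{\F_p}\ker\big(\delta\colon\widetilde E_{a,b}(\F_p)[p]\to\F_p\big)$; since $\widetilde E_{a,b}(\F_p)[p]=\langle P_0\rangle\cong\Z/p\Z$, this equals $2$ precisely when $\delta(s,t)=0$ and $1$ otherwise. (This is consistent with Lemma~\ref{init lemma}: $\delta(s,t)=0$ is the modulo-$p^2$ incarnation of $E_{A,B}(\Q_p)[p]\neq 0$.)

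\textbf{Linearity and the crux.} Multiplication-by-$p$, the group law, and the Weierstrass equation are all given by polynomials with integer coefficients, and $p\widetilde P_0\equiv O\pmod p$; expanding to first order in the parameters $ps,pt$ modulo $p^2$ therefore shows $\delta(s,t)=\delta_0+\delta_1 s+\delta_2 t$ with $\delta_0,\delta_1,\delta_2\in\F_p$ (the constant $\delta_0$ depending on the auxiliary choice of $\widetilde a,\widetilde b$). The key point — and the step I expect to be the main obstacle — is to show $(\delta_1,\delta_2)\neq(0,0)$, i.e.\ that varying the curve while fixing its reduction genuinely moves the local $p$-torsion; a priori one must exclude both that $\delta\equiv 0$ and that $\delta$ is nowhere zero. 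This is precisely the explicit deformation computation carried out by David and Weston, in which the hypothesis $j(E_{a,b})\neq 0,1728$ (equivalently $a,b\neq 0$) is what rules out extra automorphisms; it is essentially the content of \cite[Corollary~3.5]{davidweston2008}. Granting $(\delta_1,\delta_2)\neq(0,0)$, the zero locus of the affine-linear form $\delta$ in $\F_p^2$ is a line, hence has exactly $p$ points $(s_i,t_i)$, and the corresponding pairs $(A_i,B_i)=(\widetilde a+ps_i,\,\widetilde b+pt_i)$ are the $p$ distinct lifts with $\op{rank}_p E_{A_i,B_i}(\Z/p^2\Z)=2$.

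In summary: the formal part (parametrizing lifts, identifying $\delta$ via the connecting map, the snake-lemma rank formula, affine-linearity of $\delta$) is routine, and the entire weight of the proposition sits in the nondegeneracy $(\delta_1,\delta_2)\neq(0,0)$, which one should either extract from the David--Weston computation or redo by an explicit first-order analysis of the $p$-division polynomial of $E_{a,b}$ at $P_0$ under the deformation $(a,b)\mapsto(a+ps,b+pt)$.
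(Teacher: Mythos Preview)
Your proposal is correct, and in fact gives considerably more than the paper does: the paper's ``proof'' is a bare citation to \cite[Proposition~3.4]{davidweston2008}, with no further argument. Your sketch is a faithful reconstruction of the David--Weston strategy (obstruction via the connecting map, affine-linearity in the lift parameters, and the nondegeneracy computation requiring $j\neq 0,1728$), and you correctly flag that the hypothesis $E_{a,b}(\F_p)[p]\neq 0$ is implicit in the statement --- the paper only ever applies the proposition to pairs $(a,b)\in\mathfrak{S}_p$, where this holds by definition.

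One small bibliographic point: the paper cites Proposition~3.4 of David--Weston for the $\Z/p^2$-level statement, while you cite their Corollary~3.5 (which is the $\Z_p$-level consequence via Lemma~\ref{init lemma}); the content is the same, but if you want to match the paper's citation exactly you should point to 3.4.
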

\begin{proof}
See \cite[Proposition 3.4]{davidweston2008} for a proof of the result.
\end{proof}

\begin{definition}\label{ definition Cp Sp}
Let $\mathfrak{S}_p$ be the subset of $\F_p\times \F_p$ consisting of all pairs $(a,b)$ such that 
\[\Delta_{a,b}\neq 0\text{ and }E_{a,b}(\F_p)[p]\neq 0.\] Let $\mathfrak{S}_p^j$ be the subset of $\mathfrak{S}_p$ consisting of pairs $(a,b)$ such that $j(E_{a,b})=j$. Set $\mathfrak{A}_p$ be the set of pairs $(A,B)\in \Z/p^2\times \Z/p^2$ such that
\[p\nmid \Delta_{A,B} \text{ and }\op{rank}_p\left(E_{A,B}(\Z/p^2)\right)=2.\]
Given $j\in \F_p$, let $\mathfrak{A}_p^j\subseteq \mathfrak{A}_p$ consist of the pairs for which $j(E_{A,B})\equiv j\mod{p}$.
\end{definition}

We would like to estimate $\#\mathfrak{S}_p$ and $\#\mathfrak{A}_p$. Note that by definition $\mathfrak{A}_p$ reduces mod-$p$ to $\mathfrak{S}_p$, and by Proposition \ref{fibers order p}, the fibers of the reduction map $\mathfrak{A}_p^j\rightarrow \mathfrak{S}_p^j$ have order $p$ provided $j\neq 0, 1728$. The quantity $\#\mathfrak{S}_p$ can be expressed in terms of Hurwitz class numbers associated to integral binary quadratic forms with fixed discriminant.
\par We follow the notation and conventions of \cite[section 2]{schoof1987nonsingular}. Let $\Delta\in \Z_{<0}$ with $\Delta\equiv 0,1\mod{4}$, and set
\[B(\Delta):=\left\{ax^2+bxy+cy^2\in \Z[x,y]\mid  a>0, b^2-4ac=\Delta\right\}.\] The group $\op{SL}_2(\Z)$ acts on $\Z[x,y]$, where a matrix $\sigma=\mtx{p}{q}{r}{s}$ sends $x\mapsto (px+qy)$ and $y\mapsto (rx+sy)$. Thus, if $f=ax^2+bxy+cy^2$, the matrix $\sigma$ acts by
\[f\circ \sigma= a(px+qy)^2+b(px+qy)(rx+sy)+c(rx+sy)^2.\] It can be checked that $B(\Delta)$ is stable under the action of $\op{SL}_2(\Z)$ and $B(\Delta)/\op{SL}_2(\Z)$ is finite, see \emph{loc. cit.} for additional details.
\begin{definition}
The \emph{Hurwitz class number} $H(\Delta)$ is the order of $B(\Delta)/\op{SL}_2(\Z)$.
\end{definition}
Given an elliptic curve $E_{/\F_p}$, the Frobenius element $\varphi\in \op{End}(E)$ satisfies the equation $\varphi^2-t\varphi+p=0$. Here, $t$ is an integer given by $\# E(\F_p)=p+1-t$. According to the well-known Hurwitz bound, we have that $|t|\leq 2 \sqrt{p}$. Note that $p$ divides $t$ if and only if $E$ is supersingular. 
\begin{definition} (cf. \cite[Definition 4.1]{schoof1987nonsingular})
Two elliptic curves $E$ and $E'$ over $\F_p$ are \emph{isogenous over $\F_p$} if \[\#E(\F_p)=\#E'(\F_p).\] Let $I(t)$ be the isogeny class of elliptic curves over $\F_p$ with $\#E(\F_p)=p+1-t$. Let $N(t)$ denote the number of $\F_p$-isomorphism classes of elliptic curves in $I(t)$.
\end{definition}
Consider the special case when $\#E(\F_p)$ is divisible by $p$, in other words, $t\equiv 1\mod{p}$. Note that since $|t|\leq  2\sqrt{p}$, it follows that if $p\geq 7$, then, $E(\F_p)$ contains a non-zero $p$-torsion point precisely when $t=1$. However, if $p\leq 5$, then the two possibilities are $t=-p+1$, in which case, $\# E(\F_p)=2p$, and $t=1$, i.e., $\#E(\F_p)=p$.
\par Let $\bar{\mathfrak{S}}_p$ be the set of isomorphism classes of elliptic curves $E$ over $\F_p$ such that $E(\F_p)[p]\neq 0$. Thus, we have that
\[\#\bar{\mathfrak{S}}_p=\begin{cases}N(1)&\text{ for }p\geq 7,\\
N(-p+1)+N(1) &\text{ for }p\leq 5.\end{cases}\] Waterhouse showed that the number of elliptic curves in a given isogeny class can be computed in terms of certain class numbers, see \cite{waterhouse1969abelian}. These results were later reformulated in terms of Hurwitz class numbers by Schoof.
\begin{theorem}[Waterhouse, Schoof]\label{waterhouse schoof theorem}
Let $t$ be an integer such that $p\nmid t$ and $t^2<4p$, then, $N(t)=H(t^2-4p)$.
\end{theorem}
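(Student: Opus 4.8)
The plan is to run Deuring's correspondence between ordinary elliptic curves over $\F_p$ and elliptic curves with complex multiplication, and then rewrite the resulting count of ideal classes as a Hurwitz class number via the classical correspondence between ideal classes of imaginary quadratic orders and $\op{SL}_2(\Z)$-classes of binary quadratic forms. First I would observe that, since $p\nmid t$ and $t^2<4p$, every $E/\F_p$ with Frobenius trace $t$ (i.e.\ $\#E(\F_p)=p+1-t$) is ordinary, and its Frobenius $\varphi$ generates an order $\Z[\varphi]\cong\Z[X]/(X^2-tX+p)$ of discriminant $t^2-4p$ inside the imaginary quadratic field $K:=\Q(\sqrt{t^2-4p})$ (note $t^2-4p<0$ and $t^2-4p\equiv 0,1\bmod 4$). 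Because $E$ is ordinary, every endomorphism commutes with $\varphi$ and is therefore defined over $\F_p$, so $\op{End}_{\F_p}(E)=\op{End}_{\overline{\F}_p}(E)$ is an order $\mathcal{O}$ with $\Z[\varphi]\subseteq\mathcal{O}\subseteq\mathcal{O}_K$. This lets me split $N(t)=\sum_{\mathcal{O}}N(t,\mathcal{O})$, the sum over the finitely many such orders $\mathcal{O}$ (indexed by the divisors of the conductor $f_0:=[\mathcal{O}_K:\Z[\varphi]]$), where $N(t,\mathcal{O})$ counts $\F_p$-isomorphism classes with trace $t$ and endomorphism ring exactly $\mathcal{O}$.

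The heart of the argument is to show $N(t,\mathcal{O})=h(\mathcal{O})$, where $h(\mathcal{O}):=\#\op{Pic}(\mathcal{O})$. Here I would invoke Deuring's lifting and reduction theorems together with the main theorem of complex multiplication. One first checks that $p\nmid t^2-4p$ (otherwise $p\mid t$), so $p$ is unramified in $K$, coprime to $f_0$, and split, $\varphi\bar\varphi=p$; moreover $\mathfrak{p}_{\mathcal{O}}:=\varphi\mathcal{O}$ is a principal prime of $\mathcal{O}$ above $p$. Deuring's theory then says the curves over $\overline{\Q}$ with CM by $\mathcal{O}$ form a $\op{Pic}(\mathcal{O})$-torsor of size $h(\mathcal{O})$, have good reduction at a prime of the ring class field lying over $\mathfrak{p}_{\mathcal{O}}$ with residue field $\F_p$, and reduce injectively on $j$-invariants onto the set of ordinary curves over $\overline{\F}_p$ with $\op{End}=\mathcal{O}$ — each of which is thereby defined over $\F_p$, with Frobenius of trace $t$. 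For $j\neq 0,1728$ the curve has precisely two quadratic twists over $\F_p$, of traces $t$ and $-t$, and $t\neq -t$ since $p\nmid t$; hence exactly one twist has trace $t$, which gives $N(t,\mathcal{O})=h(\mathcal{O})$. The $j$-invariants $0$ and $1728$ can occur only when $\mathcal{O}=\Z[\zeta_3]$ or $\mathcal{O}=\Z[i]$, and there a short direct inspection of the six (resp.\ four) twists and their Frobenius traces again produces exactly one twist of trace $t$, matching $h(\Z[\zeta_3])=h(\Z[i])=1$.

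It then remains to identify $\sum_{\mathcal{O}}h(\mathcal{O})$ with $H(t^2-4p)$, which is pure bookkeeping. The orders $\mathcal{O}$ between $\Z[\varphi]$ and $\mathcal{O}_K$ are exactly those of discriminant $(t^2-4p)/f^2$ as $f$ ranges over the divisors of $f_0$, and for each such $\mathcal{O}$, $h(\mathcal{O})$ equals the number of $\op{SL}_2(\Z)$-classes of primitive positive-definite binary quadratic forms of that discriminant (the classical form–ideal dictionary, valid for non-maximal orders via proper ideals). On the other hand, every form in $B(t^2-4p)$ has a well-defined content $c$, equals $c$ times a primitive form of discriminant $(t^2-4p)/c^2$, and $(t^2-4p)/c^2$ is again a discriminant (i.e.\ $\equiv 0,1\bmod 4$) precisely when $c\mid f_0$; hence
$H(t^2-4p)=\#\bigl(B(t^2-4p)/\op{SL}_2(\Z)\bigr)=\sum_{c\mid f_0}h\bigl((t^2-4p)/c^2\bigr)=\sum_{\mathcal{O}}h(\mathcal{O})$,
where $h(D)$ denotes the number of $\op{SL}_2(\Z)$-classes of primitive forms of discriminant $D$. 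Combining with the previous paragraph yields $N(t)=H(t^2-4p)$.

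The step I expect to be the main obstacle — indeed the only substantive input — is the middle one: Deuring's lifting/reduction package and the fact that the elliptic curves over $\overline{\F}_p$ with a prescribed endomorphism order form a torsor under the Picard group of that order, together with the twist count at the special $j$-invariants $0$ and $1728$. The first and last paragraphs are routine once this is granted.
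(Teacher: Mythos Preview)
The paper does not actually prove this theorem: its entire proof is a reference to \cite[Theorem~4.6]{schoof1987nonsingular}. Your sketch, by contrast, reproduces the substance of that reference --- the Deuring--Waterhouse argument via CM lifting, the decomposition $N(t)=\sum_{\mathcal{O}\supseteq\Z[\varphi]}N(t,\mathcal{O})$, the identification $N(t,\mathcal{O})=h(\mathcal{O})$ through the $\op{Pic}(\mathcal{O})$-torsor structure and a twist count, and finally the form--ideal dictionary to assemble $\sum_{\mathcal{O}}h(\mathcal{O})=H(t^2-4p)$. This is correct in outline and is precisely the route taken in the cited source, so there is no genuine divergence of method to report; you have simply supplied what the paper outsourced. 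One small point worth making explicit when you write it up: the paper's Definition of $H(\Delta)$ is the \emph{unweighted} count $\#\bigl(B(\Delta)/\op{SL}_2(\Z)\bigr)$, so your treatment of $j=0,1728$ --- showing that exactly one twist over $\F_p$ has the prescribed trace $t$, matching $h(\Z[\zeta_3])=h(\Z[i])=1$ --- is the right bookkeeping here (as opposed to the weighted Hurwitz--Kronecker convention, where those classes contribute $1/3$ and $1/2$ and one instead counts curves weighted by automorphisms).
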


\begin{proof}
The reader is referred to \cite[Theorem 4.6]{schoof1987nonsingular} for a proof of the result.
\end{proof}

\begin{corollary}\label{ estimate on bar Sp}
Let $p$ be a prime and let $\bar{\mathfrak{S}}_p$ be as above. Then, we have that 
\[\#\bar{\mathfrak{S}}_p=\begin{cases}H(1-4p)&\text{ for }p\geq 7,\\
H(p^2+1-6p)+H(1-4p) &\text{ for }p\leq 5.\end{cases}\]
\end{corollary}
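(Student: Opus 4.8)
The plan is to derive the formula by substituting the Waterhouse--Schoof evaluation of the isogeny-class counts (Theorem~\ref{waterhouse schoof theorem}) into the case-by-case description of $\bar{\mathfrak{S}}_p$ recorded just above. Recall that for an elliptic curve $E_{/\F_p}$ with trace of Frobenius $t$, so that $\#E(\F_p)=p+1-t$, one has $E(\F_p)[p]\neq 0$ precisely when $p\mid \#E(\F_p)$, i.e.\ when $t\equiv 1\pmod p$; and the Hasse bound gives $|t|\leq 2\sqrt p$. The dichotomy in the statement is entirely a matter of how many integers $t$ satisfy both of these constraints, so the argument splits along $p\geq 7$ versus $p\leq 5$.

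For $p\geq 7$ I would note that $2\sqrt p<p$, so $t=1$ is the unique integer with $t\equiv 1\pmod p$ and $|t|\leq 2\sqrt p$; hence $\bar{\mathfrak{S}}_p$ consists exactly of the $\F_p$-isomorphism classes in the isogeny class with trace $1$, and $\#\bar{\mathfrak{S}}_p=N(1)$. The hypotheses of Theorem~\ref{waterhouse schoof theorem} hold for $t=1$ (namely $p\nmid 1$ and $1<4p$), giving $N(1)=H(1-4p)$, which is the asserted value.

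For $p\leq 5$ a direct check over $p\in\{2,3,5\}$ shows that the integers $t\equiv 1\pmod p$ with $|t|\leq 2\sqrt p$ are exactly $t=1$ and $t=1-p$, and these lie in distinct (hence disjoint) isogeny classes, so $\#\bar{\mathfrak{S}}_p=N(1)+N(1-p)$. Applying Theorem~\ref{waterhouse schoof theorem} with $t=1$ gives $N(1)=H(1-4p)$ as before. For $t=1-p$ one verifies the hypotheses: $1-p\equiv 1\pmod p$ so $p\nmid(1-p)$, and $(1-p)^2<4p$ is equivalent to $p^2-6p+1<0$, i.e.\ $p\in(3-2\sqrt 2,\,3+2\sqrt 2)$, which holds exactly for $2\leq p\leq 5$; thus $N(1-p)=H\big((1-p)^2-4p\big)=H(p^2-6p+1)=H(p^2+1-6p)$. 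Adding the two contributions yields the stated formula.

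There is no real obstacle: all of the substantive input is packaged in Theorem~\ref{waterhouse schoof theorem}, and the small-prime cases $p=2,3$ (where the elliptic curves carry extra automorphisms) are already subsumed by that theorem. The one point worth flagging is the threshold inequality $(1-p)^2<4p$, which both legitimizes the application of Theorem~\ref{waterhouse schoof theorem} at $t=1-p$ and explains why the extra term $H(p^2+1-6p)$ appears for $p\leq 5$ but not for $p\geq 7$: for $p\geq 7$ the trace $1-p$ falls outside the Hasse interval and so contributes nothing.
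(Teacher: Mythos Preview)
Your argument is correct and follows exactly the paper's approach: recall the decomposition $\#\bar{\mathfrak{S}}_p=N(1)$ (resp.\ $N(1)+N(1-p)$) for $p\geq 7$ (resp.\ $p\leq 5$), then apply Theorem~\ref{waterhouse schoof theorem} to convert each $N(t)$ into a Hurwitz class number. One small imprecision: the inequality $2\sqrt{p}<p$ already holds at $p=5$ and by itself does not force $t=1$ to be the unique admissible trace; the correct threshold is the one you identify later, namely $(1-p)^2<4p\iff p\leq 5$, which is what actually distinguishes the two cases.
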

\begin{proof}
From the discussion above Theorem \ref{waterhouse schoof theorem}, 
\[\#\bar{\mathfrak{S}}_p=\begin{cases}N(1)&\text{ for }p\geq 7,\\
N(-p+1)+N(1) &\text{ for }p\leq 5.\end{cases}\] The result then follows from Theorem \ref{waterhouse schoof theorem}.
\end{proof}
\begin{proposition}\label{Cp estimate prop}
Let $p$ be any prime and $\mathfrak{S}_p$ be as above. Then, we have that 
\[\# \mathfrak{S}_p\leq \left(\frac{p-1}{2}\right)\left\{ z_p H(p^2+1-6p)+H(1-4p) \right\}< Cp^{3/2} \op{log}p (\op{log log } p)^2,\] where $C>0$ is an effective constant (independent of $p$) and $z_p=\begin{cases} 1 &\text{ if }p\leq 5,\\
0 &\text{ if }p\geq 7.
\end{cases}$
\end{proposition}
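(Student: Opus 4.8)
The plan is to first reduce $\#\mathfrak{S}_p$ to the count $\#\bar{\mathfrak{S}}_p$ of isomorphism classes supplied by Corollary~\ref{ estimate on bar Sp}, and then to bound the resulting Hurwitz class numbers. For the first step, assume $p\geq 5$ and let $\F_p^\times$ act on $\{(a,b)\in\F_p\times\F_p : \Delta_{a,b}\neq 0\}$ by $u\cdot(a,b)=(u^4a,u^6b)$. Two pairs lie in the same orbit precisely when $E_{a,b}\cong E_{a',b'}$ over $\F_p$, so the orbit space is the set of $\F_p$-isomorphism classes of elliptic curves, and the stabilizer of $(a,b)$ is canonically identified with $\Aut_{\F_p}(E_{a,b})$, which always contains $\{\pm 1\}$. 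Since the condition $E_{a,b}(\F_p)[p]\neq 0$ depends only on the $\F_p$-isomorphism class, $\mathfrak{S}_p$ is a union of such orbits, and these orbits are in bijection with $\bar{\mathfrak{S}}_p$. By the orbit--stabilizer formula,
\[
\#\mathfrak{S}_p=\sum_{[E]\in\bar{\mathfrak{S}}_p}\frac{p-1}{\#\Aut_{\F_p}(E)}\leq \frac{p-1}{2}\,\#\bar{\mathfrak{S}}_p,
\]
and substituting the value of $\#\bar{\mathfrak{S}}_p$ from Corollary~\ref{ estimate on bar Sp} (which is exactly $z_pH(p^2+1-6p)+H(1-4p)$ in both the $p\geq 7$ and $p\leq 5$ cases) yields the first inequality. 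For $p\in\{2,3\}$ one checks the inequality directly --- indeed $\mathfrak{S}_2=\emptyset$ since $\Delta_{a,b}\equiv 0$ in characteristic $2$ --- and these primes play no role in what follows.

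For the second inequality it suffices to bound the right-hand side. For $p\leq 5$ there are only finitely many primes, so the bound holds after enlarging $C$. For $p\geq 7$ we have $z_p=0$, and it remains to show $\tfrac{p-1}{2}H(1-4p)\ll p^{3/2}\log p\,(\log\log p)^2$, i.e. $H(4p-1)\ll p^{1/2}\log p\,(\log\log p)^2$. Here I would invoke the standard effective upper bound for Hurwitz class numbers: writing $H(N)$ as a weighted sum $\sum_{f^2\mid N}h(-N/f^2)$ of class numbers of imaginary quadratic orders, the class number formula together with the elementary estimate $L(1,\chi)\ll \log N$ gives $h(-N/f^2)\ll f^{-1}\sqrt{N}\log N$, while $\sum_{f^2\mid N}f^{-1}\ll \log\log N$; hence $H(N)\ll \sqrt{N}\,\log N\,\log\log N$, which is in particular $\ll\sqrt{N}\,\log N\,(\log\log N)^2$. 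Applying this with $N=4p-1\asymp p$ and multiplying by $\tfrac{p-1}{2}<\tfrac p2$ gives the claim; all implied constants are effective because Siegel's theorem is not invoked.

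The routine parts are the orbit counting and the bookkeeping with the two cases of Corollary~\ref{ estimate on bar Sp}. The only substantive point --- and the step where one must be careful both to preserve effectivity and to obtain precisely the stated shape of the error term --- is the upper bound on $H(4p-1)$; one should either cite a reference in which $H(N)\ll \sqrt N\,\log N\,(\log\log N)^2$ (or sharper) is proved with an explicit constant, or include the short derivation from the class number formula sketched above. A minor subtlety worth spelling out explicitly is that $\mathfrak{S}_p$ is genuinely a union of twisting orbits, which is what legitimises the orbit--stabilizer reduction, and that curves with $j\in\{0,1728\}$ cause no difficulty since the only property of $\Aut_{\F_p}(E)$ used is that its order is at least $2$.
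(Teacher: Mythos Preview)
Your proof is correct and follows essentially the same route as the paper: the first inequality is obtained by the twisting action $(a,b)\mapsto(c^4a,c^6b)$ together with the observation that each orbit has size at most $(p-1)/2$, and then Corollary~\ref{ estimate on bar Sp} is invoked. The only difference is that for the second inequality the paper simply cites a known effective bound on Hurwitz class numbers (\cite[Proposition~1.8]{Lenstra_annals}), whereas you sketch the derivation via the class number formula and the elementary bound $L(1,\chi)\ll\log N$; your sketch in fact yields the slightly sharper $H(N)\ll\sqrt{N}\log N\,\log\log N$, which of course suffices.
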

\begin{proof}
The elliptic curves $E_{a,b}$ and $E_{a',b'}$ are isomorphic over $\F_p$ if there exists $c\in \F_p^\times$ such that 
\[a'=c^4a \text{ and } b'=c^6 b.\] Therefore, the number of elliptic curves $E_{a',b'}$ isomorphic to a given elliptic curve $E_{a,b}$ is at most $\left(\frac{p-1}{2}\right)$. Therefore, we have that
\[\begin{split}
    \# \mathfrak{S}_p\leq \left(\frac{p-1}{2}\right) \# \bar{\mathfrak{S}}_p<Cp^{3/2} \op{log} p (\op{log log} p)^2,
\end{split}\] where the first inequality is the assertion of Corollary \ref{ estimate on bar Sp}, and the second follows from a well known bound on Hurwitz numbers \cite[Proposition 1.8]{Lenstra_annals}.
\end{proof}
\begin{corollary}\label{cor frakCp}
For $\mathfrak{A}_p$ be as in Definition \ref{ definition Cp Sp}, we have that 
\begin{equation}\label{bound on Up}\# \frac{\mathfrak{A}_p}{p^4}<\frac{2}{p} + C p^{-3/2}\op{log}p (\op{log log} p)^2,\end{equation} where $C>0$ is an effective constant.
\end{corollary}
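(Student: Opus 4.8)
The plan is to stratify $\mathfrak{A}_p$ according to the $j$-invariant of the mod-$p$ reduction and to combine the fibre count of Proposition \ref{fibers order p} with the bound on $\#\mathfrak{S}_p$ furnished by Proposition \ref{Cp estimate prop}. First I would record that reduction modulo $p$ carries $\mathfrak{A}_p$ into $\mathfrak{S}_p$: if $(A,B)\in\mathfrak{A}_p$ has reduction $(a,b)$, then $p\nmid\Delta_{A,B}$ gives $\Delta_{a,b}\neq 0$ in $\F_p$, while $\op{rank}_p E_{A,B}(\Z/p^2)=2$ forces $E_{a,b}(\Q_p)[p]\neq 0$ by Lemma \ref{init lemma}, hence $E_{a,b}(\F_p)[p]\neq 0$; thus $(a,b)\in\mathfrak{S}_p$. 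Consequently, writing $\mathfrak{A}_p=\bigsqcup_j\mathfrak{A}_p^j$ and $\mathfrak{S}_p=\bigsqcup_j\mathfrak{S}_p^j$, reduction restricts to a map $\mathfrak{A}_p^j\to\mathfrak{S}_p^j$ for every $j$.

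For $j\neq 0,1728$ I would then argue that each fibre of $\mathfrak{A}_p^j\to\mathfrak{S}_p^j$ has exactly $p$ elements: a pair $(a,b)\in\mathfrak{S}_p^j$ has precisely $p^2$ lifts to $\Z/p^2\times\Z/p^2$, all of which satisfy $p\nmid\Delta$ (their common reduction does), and by Proposition \ref{fibers order p} exactly $p$ of them have $p$-rank $2$. Hence $\#\mathfrak{A}_p^j=p\,\#\mathfrak{S}_p^j$, and summing over $j\neq 0,1728$ and using $\sum_{j\neq 0,1728}\#\mathfrak{S}_p^j\leq\#\mathfrak{S}_p$ gives $\sum_{j\neq 0,1728}\#\mathfrak{A}_p^j\leq p\,\#\mathfrak{S}_p$. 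For the two exceptional values, where Proposition \ref{fibers order p} does not apply, a crude count is enough: $j(E_{A,B})\equiv 0\pmod p$ forces $A\equiv 0\pmod p$, leaving at most $p$ choices for $A$ and $p^2$ for $B$, so $\#\mathfrak{A}_p^0\leq p^3$, and symmetrically $\#\mathfrak{A}_p^{1728}\leq p^3$ (for $p\geq 5$ these strata are disjoint, but in any case their union has at most $2p^3$ elements).

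Combining the three parts gives $\#\mathfrak{A}_p\leq p\,\#\mathfrak{S}_p+2p^3$; dividing by $p^4$ and inserting the estimate $\#\mathfrak{S}_p<Cp^{3/2}\op{log}p(\op{log log}p)^2$ of Proposition \ref{Cp estimate prop} yields $\#\mathfrak{A}_p/p^4<2/p+Cp^{-3/2}\op{log}p(\op{log log}p)^2$, which is \eqref{bound on Up}. The argument is largely bookkeeping built on the earlier propositions, so I do not expect a genuine obstacle; the one point deserving attention is the exceptional stratum $j\in\{0,1728\}$, because it is precisely the crude $2p^3$ term that contributes the main term $2/p$ of the estimate — one can afford to be lossy only for $j\neq 0,1728$, whose entire contribution is absorbed into the error term.
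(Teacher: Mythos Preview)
Your proof is correct and follows essentially the same route as the paper: split $\mathfrak{A}_p$ according to whether the mod-$p$ $j$-invariant lies in $\{0,1728\}$, bound the exceptional part crudely by $2p^3$, and for the generic part use Proposition~\ref{fibers order p} to get $\#\mathfrak{A}_p^{(2)}\leq p\,\#\mathfrak{S}_p$ together with Proposition~\ref{Cp estimate prop}. Your write-up even adds the verification (left implicit in the paper) that reduction modulo $p$ carries $\mathfrak{A}_p$ into $\mathfrak{S}_p$; the only cosmetic slip is writing ``$E_{a,b}(\Q_p)[p]\neq 0$'' when $(a,b)\in\F_p^2$ --- you mean this for a $\Z_p$-lift of $(A,B)$, whose reduction is $E_{a,b}$.
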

\begin{proof}
We subdivide $\mathfrak{A}_p$ into two disjoint sets $\mathfrak{A}_p^{(1)}$ and $\mathfrak{A}_p^{(2)}$. Set $\mathfrak{A}_p^{(1)}:=\mathfrak{A}_p^0\cup \mathfrak{A}_p^{1728}$, i.e., the pairs $(A,B)\in \mathfrak{A}_p$ such that the $j$-invariant is either $0$ or $1728$ modulo $p$. Note that $\mathfrak{A}_p^{(1)}$ consists of all pairs $(A,B)\in \mathfrak{A}_p$ such that $p\mid AB$. Clearly, the number of pairs $(A,B)$ such that $p\mid A$ is $p^3$. Thus we have the following obvious upper bound: \begin{equation}\label{cor 3.13 e1}\#\mathfrak{A}_p^{(1)}<2 p^3.\end{equation} Let $\mathfrak{A}_p^{(2)}$ be the complement of $\mathfrak{A}_p^{(1)}$ in $\mathfrak{A}_p$. For any pair $(A,B)\in \mathfrak{A}_p^{(2)}$, we have by construction that $j(E_{A,B})\not \equiv 0, 1728\mod{p}$. Proposition \ref{fibers order p} asserts that given a pair $(a, b)\in \mathfrak{S}_p$ with $j$-invariant not equal to $0$ or $1728$, there are $p$ lifts $(A,B)\in \mathfrak{A}_p$ of $(a,b)$. Therefore, we have the following upper bound: $\# \mathfrak{A}_p^{(2)}\leq p \# \mathfrak{S}_p$. Invoking Proposition \ref{Cp estimate prop}, we obtain the following bound:
\begin{equation}\label{cor 3.13 e2}\# \mathfrak{A}_p^{(2)}< Cp^{5/2} \op{log}p (\op{log log } p)^2.\end{equation}
Putting together \eqref{cor 3.13 e1} and \eqref{cor 3.13 e2}, we obtain
\[ \frac{\#\mathfrak{A}_p}{p^4}=\frac{\#\mathfrak{A}_p^{(1)}}{p^4}+\frac{\#\mathfrak{A}_p^{(2)}}{p^4}<\frac{2}{p} + C p^{-3/2}\op{log}p (\op{log log} p)^2.\]
\end{proof}
\begin{remark}
In the proof of the above result, we have invoked the obvious upper bound for $\#\mathfrak{A}_p^{(1)}$, which contributes to the term $\frac{2}{p}$ in the estimate \eqref{bound on Up}. Given a pair $(A,B)$ such that $p\mid AB$, note that the pair $(A,B)$ is contained in $\mathfrak{A}_p^{(1)}$ precisely when $(\bar{A}, \bar{B})=(A,B)\mod{p}$ is contained in $\mathfrak{S}_p$. Let $\mathfrak{S}_p^{\star}$ be the subset of $\mathfrak{S}_p$ consisting of all pairs $(a,b)$ such that either $a$ or $b$ is $0$. In the above proof, we have used the obvious bound $\#\mathfrak{S}_p^{\star}< 2p$ to obtain that 
\[\# \mathfrak{A}_p^{(1)}\leq p^2\#\mathfrak{S}_p^{\star}< 2p^3.\] The main term $2/p$ in the estimate $\eqref{bound on Up}$ can be replaced by $\#\mathfrak{S}_p^{\star}/p^2$. Thus, a better understanding of $\#\mathfrak{S}_p^{\star}$ would potentially give a better estimate.
\end{remark}
\section{Results for a fixed prime and varying elliptic curve}\label{s 4}
\par Let $E_{/\Q}$ be an elliptic curve. Note that $E$ is defined by a Weierstrass equation $E:y^2=x^3+ax+b$, where the integers $a$ and $b$ such that for all primes $\ell$, $\ell^6\nmid b$ whenever $\ell^4\mid a$. Such a Weierstrass equation is unique and referred to as the \emph{minimal Weierstrass equation}. The \emph{naive height} of $E$ is defined to be the quantity $H(E):=\op{max}\left\{|a|^3, b^2\right\}$.
\begin{definition}\label{density def}Let $\mathcal{E}$ be the set of isomorphism classes of elliptic curves defined over $\Q$. Given a subset $\mathcal{S}$ of $\mathcal{E}$, and $x>0$, we set
\[\mathcal{S}_{<x}:=\{E\in \mathcal{S}\mid H(E)<x\},\]
and $\frak{d}(\mathcal{S},x):=\frac{\# \mathcal{S}_{<x}}{\# \mathcal{E}_{<x}}$.
We say that $\mathcal{S}$ has upper (resp. lower) density $\delta$ if $\limsup_{x\rightarrow \infty} \frak{d}(\mathcal{S},x) =\delta$ (resp. $\liminf_{x\rightarrow \infty} \frak{d}(\mathcal{S},x) =\delta$). If 
\[\lim_{x\rightarrow \infty} \frak{d}(\mathcal{S},x) =\delta,\] then, we say that $\mathcal{S}$ has density $\delta$. Part of the requirement here is that the above limit exists. We let $\bar{\mathfrak{d}}(\mathcal{S})$ (resp. $\underline{\mathfrak{d}}(\mathcal{S})$) be the upper (resp. lower) density of $\mathcal{S}$. The density is denoted $\mathfrak{d}(\mathcal{S})$. Note that when $\mathfrak{d}(\mathcal{S})$ is defined, so are $\underline{\mathfrak{d}}(\mathcal{S})$ and $\bar{\mathfrak{d}}(\mathcal{S})$, and  
\[\mathfrak{d}(\mathcal{S})=\underline{\mathfrak{d}}(\mathcal{S})=\bar{\mathfrak{d}}(\mathcal{S}).\]
\end{definition}
\par The conjecture on rank distribution by Katz and Sarnak \cite{katz1999random} predicts that when ordered by height, $50\%$ of elliptic curves have rank $0$, the other $50\%$ of rank $1$ and $0\%$ of rank $>1$. In other words, let $\mathcal{E}^0, \mathcal{E}^1$ and $\mathcal{E}^{>1}$ be the subset of elliptic curves of rank $0,1$ and $>1$ respectively. Then, the conjecture predicts that $\mathfrak{d}(\mathcal{E}^0)=\mathfrak{d}(\mathcal{E}^1)=\frac{1}{2}$, and $\mathfrak{d}(\mathcal{E}^{>1})=0$.
\par Certain unconditional results have been proven by Bhargava and Shankar who study the average size of the the $p$-Selmer group $\op{Sel}_p(E/\Q)$. This is the $p$-torsion subgroup of the $p$-primary Selmer group $\op{Sel}_{p^\infty}(E/\Q)$.  For instance, in the preprint \cite{bhargava2013average}, it is shown that the average size of the $5$-Selmer group is $6$ as $E$ varies over $\mathcal{E}$. This result implies that $\underline{\mathfrak{d}}(\mathcal{E}^0)\geq \frac{1}{5}$ and $\bar{\mathfrak{d}}(\mathcal{E}^{>1})\leq \frac{1}{5}$.

\par In this section, we fix a prime number $p\geq 5$ and study the following question.

\begin{question}
As $E$ varies over all elliptic curves defined over $\Q$, what can be said about the proportion of elliptic curves $E$ for which the following equivalent conditions satisfied:
\begin{enumerate}
    \item $\Rfine{\Q_{\op{cyc}}}$ has finite cardinality.
    \item The $\mu$ and $\lambda$-invariants of $\Rfine{\Q_{\op{cyc}}}$ are both equal to $0$.
    \item The leading term $f(0)$ of the characteristic element $f(T)$ of $\Rfine{\Q_{\op{cyc}}}$ is a unit in $\Z_p$.
\end{enumerate}
\end{question}

Note that when the Selmer group $\op{Sel}_{p^\infty}(E/\Q_{\op{cyc}})$ is finite, it is in fact equal to $0$, see \cite[Corollary 3.6]{KR21}. This is only possible when the Mordell-Weil rank of $E$ is $0$ and $E$ has ordinary reduction at $p$ (for further details, the reader may refer to \cite[section 4]{KR21}). Since the fine Selmer group is a subgroup of the Selmer group, it follows that if $\op{Sel}_{p^\infty}(E/\Q_{\op{cyc}})=0$, then so is $\Rfine{\Q_{\op{cyc}}}=0$. However, it is indeed possible for the fine Selmer group $\Rfine{\Q_{\op{cyc}}}$ to be finite, while the entire Selmer group $\op{Sel}_{p^\infty}(E/\Q_{\op{cyc}})$ is infinite. As the following example shows, this may be so even in the special case when the elliptic curve $E$ has Mordell-Weil rank $0$ and good ordinary reduction at $p$.

\begin{example}
We refer to \cite[section 9.1]{wuthrich2007iwasawa}, consider the elliptic curve 
\[E:y^2+y=x^3-x^2-10x-20,\] and consider the prime $p=5$. The $\mu$-invariant of the Selmer group $\op{Sel}_{p^\infty}(E/\Q_{\op{cyc}})$ is $1$, in fact, 
\[\op{Sel}_{p^\infty}(E/\Q_{\op{cyc}})^\vee=\Lambda/p.\] On the other hand, $\Rfine{\Q_{\op{cyc}}}$ is finite and non-zero. In particular, the $\mu$ and $\lambda$-invariants of the fine Selmer group are zero, however, $\Rfine{\Q_{\op{cyc}}}\neq 0$.
\end{example}
More generally, if $\Rfine{\Q_{\op{cyc}}}$ is finite and non-zero at a prime of good ordinary reduction, then the Selmer group $\op{Sel}_{p^\infty}(E/\Q_{\op{cyc}})$ must be infinite, since otherwise it would be zero.

\subsection{Statistics for the Selmer group}
Before we proceed to prove results for the fine Selmer group, let us briefly recall the statistical results for the Selmer group $\op{Sel}_{p^\infty}(E/\Q_{\op{cyc}})$ proved by Kundu and the first named author in \cite[section 4]{KR21}. Some of these results may be marginally refined using \cite[Lemma 6.4]{ray2021arithmetic}. In this subsection, we summarize the results from these sources. Let $\mathfrak{F}_p$ be the the set of elliptic curves $E\in \mathcal{E}$, such that 
\begin{enumerate}
    \item $\op{rank} E(\Q)=0$, 
    \item $E$ has good ordinary reduction at $p$,
    \item $\op{Sel}_{p^\infty}(E/\Q_{\op{cyc}})\neq 0$. 
\end{enumerate}

Let $\mathcal{S}_p$ be the set of elliptic curves $E_{/\Q}$ such that $\Sh(E/\Q)[p^\infty]\neq 0$. A conjecture of Delaunay \cite{delaunay2007heuristics} predicts that $\bar{\mathfrak{d}}(\mathcal{S}_p)=1-\prod_{i\geq 1}\left(1-\frac{1}{p^{2i-1}}\right)=\frac{1}{p}+\frac{1}{p^3}-\frac{1}{p^4}+\dots$. The heuristics supporting this conjecture have been further refined by Poonen and Rains, see \cite{poonen2012random}. \begin{conjecture}[Del-p]\label{Del}
The proportion of all elliptic curves such that $E$ for which $p\mid \# \Sh(E/\Q)$ is given by
\[\left(1-\prod_{i\geq 1} \left(1-\frac{1}{p^{2i-1}}\right)\right).\]
\end{conjecture}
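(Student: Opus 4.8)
The displayed formula is stated as a conjecture, not a theorem, so what is actually available is a \emph{heuristic derivation} rather than a proof: the product arises from the Cohen--Lenstra--Delaunay philosophy combined with the Poonen--Rains model for $p$-Selmer groups, and the plan is to explain that derivation and flag precisely where a genuine proof would have to do something unknown.

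First I would recall Delaunay's heuristic in the rank-$0$ case: order elliptic curves $E_{/\Q}$ with $\op{rank} E(\Q)=0$ by height and model $\Sh(E/\Q)$ as a random object in the category of finite abelian groups equipped with a nondegenerate alternating pairing, each such group $G$ being weighted proportionally to $1/\#\op{Aut}(G)$ (the Cohen--Lenstra weighting adapted to the symplectic setting). The $p$-part $\Sh(E/\Q)[p^\infty]$ is then a random finite abelian $p$-group of square order carrying a perfect alternating pairing; a standard computation with the associated generating function shows that the probability such a $G$ is trivial equals $\prod_{i\geq 1}\bigl(1-p^{1-2i}\bigr)$. Hence the probability that $p\mid\#\Sh(E/\Q)$ is the complementary quantity $1-\prod_{i\geq 1}\bigl(1-p^{1-2i}\bigr)=\frac1p+\frac1{p^3}-\frac1{p^4}+\cdots$, which is exactly the asserted proportion.

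Next I would invoke the Poonen--Rains refinement, which recovers the same value from a more robust model and explains why the symplectic weighting is the ``correct'' one. One models $\op{Sel}_p(E/\Q)$ as $V\cap W$, where $V$ and $W$ are random maximal isotropic $\F_p$-subspaces of an infinite-dimensional nondegenerate quadratic space over $\F_p$, chosen independently for the natural measure; the distribution of $\dim_{\F_p}(V\cap W)$ is then computed in closed form, and, after subtracting the (conjecturally independent) contribution of the $p$-torsion in the Mordell--Weil group, one reads off the distribution of $\Sh(E/\Q)[p]$ and in particular $\Pr\bigl(\Sh(E/\Q)[p]\neq 0\bigr)$. Matching this with Delaunay's product gives the stated formula; and since the curves of rank $\geq 1$ form a set of density $0$ under Katz--Sarnak, restricting to rank $0$ does not change the overall proportion over all of $\mathcal E$. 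The details of both models are in \cite{delaunay2007heuristics} and \cite{poonen2012random}, to which I would refer.

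The main obstacle is that \emph{none of these steps is a theorem}: the passage from the random-alternating-group / random-isotropic-subspace model to the actual arithmetic family of elliptic curves ordered by height is precisely the content of the conjecture, and is not known unconditionally for any single prime $p$. The strongest available results in this direction, the work of Bhargava and Shankar for $p\leq 5$, control \emph{averages} of $p$-Selmer groups but do not pin down the full distribution of $\Sh[p]$, so they do not suffice. Accordingly, I would present this as a heuristic derivation motivating the conjecture, make explicit that it is used in the sequel only as a conditional input, and note that an honest proof would require genuinely new arithmetic information beyond current techniques.
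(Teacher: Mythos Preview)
Your proposal is correct: the statement is a conjecture, not a theorem, and the paper does not attempt to prove it either---it simply attributes the formula to Delaunay \cite{delaunay2007heuristics}, notes the Poonen--Rains refinement \cite{poonen2012random}, and then uses (Del-$p$) only as a conditional hypothesis in the subsequent density estimates. Your heuristic derivation via the Cohen--Lenstra--Delaunay symplectic weighting and the Poonen--Rains maximal-isotropic model is exactly the background the paper is invoking by citation, and your explicit acknowledgment that no unconditional proof exists matches the paper's stance.
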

In preparation of the next result, recall the definition of the set $\mathfrak{S}_p$ from Definition \ref{ definition Cp Sp}.
\begin{theorem}(Kundu $\&$ R.) \label{kundu ray theorem}
Let $p\geq 5$ be a prime number and $\mathfrak{F}_p$ be defined as above. Assume $\Sh(E/\Q)[p^\infty]$ is finite for all elliptic curves $E_{/\Q}$ and that the conjecture (Del-p) is satisfied. Then, there is an effective constant $C_1>0$ such that the following bounds are satisfied:
\begin{equation}\label{selmer main formula}
\begin{split}\bar{\mathfrak{d}}(\mathfrak{F}_p)\leq & \zeta(10)\frac{\#\mathfrak{S}_p}{p^2}+\left(1-\prod_{i\geq 1} \left(1-\frac{1}{p^{2i-1}}\right)\right)+(\zeta(p)-1),\\
< & C_1 \frac{\op{log} p (\op{log log}p)^2}{\sqrt{p}}+\left(1-\prod_{i\geq 1} \left(1-\frac{1}{p^{2i-1}}\right)\right)+(\zeta(p)-1).
\end{split}\end{equation}
\end{theorem}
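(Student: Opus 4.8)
The plan is to bound $\bar{\mathfrak{d}}(\mathfrak{F}_p)$ by covering $\mathfrak{F}_p$ with three sets whose densities we can control, one for each of the arithmetic obstructions appearing in Wuthrich's leading-term formula \eqref{leading term} applied to the \emph{classical} Selmer group via the analogous Euler-characteristic formula of \cite{KR21}. First I would recall from \cite[section 4]{KR21} (and the refinement \cite[Lemma 6.4]{ray2021arithmetic}) that for $E$ of rank $0$ with good ordinary reduction at $p$, the group $\op{Sel}_{p^\infty}(E/\Q_{\op{cyc}})$ vanishes unless at least one of the following holds: (i) $p \mid \#\widetilde{E}(\F_p)$, i.e.\ $p$ is anomalous for $E$, equivalently $E(\Q_p)[p] \neq 0$ or the relevant local factor at $p$ is divisible by $p$; (ii) $\Sh(E/\Q)[p^\infty] \neq 0$; or (iii) $p \mid \prod_{\ell \neq p} c_\ell(E)$. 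Thus $\mathfrak{F}_p \subseteq \mathcal{A}_p \cup \mathcal{S}_p \cup \mathcal{C}_p$ where $\mathcal{A}_p$ is the set of curves anomalous at $p$, $\mathcal{S}_p$ as defined above, and $\mathcal{C}_p$ the set with $p \mid \prod_\ell c_\ell(E)$. Subadditivity of upper density then gives $\bar{\mathfrak{d}}(\mathfrak{F}_p) \leq \bar{\mathfrak{d}}(\mathcal{A}_p) + \bar{\mathfrak{d}}(\mathcal{S}_p) + \bar{\mathfrak{d}}(\mathcal{C}_p)$, and the three summands will produce the three terms on the right-hand side of \eqref{selmer main formula}.

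Next I would estimate each term. For $\bar{\mathfrak{d}}(\mathcal{S}_p)$, the hypothesis (Del-p), i.e.\ Conjecture \ref{Del}, is exactly the statement that this density is $1 - \prod_{i\geq 1}(1 - p^{-(2i-1)})$; so this term is handed to us. For $\bar{\mathfrak{d}}(\mathcal{C}_p)$, I would use a standard sieve/lattice-point count: $p \mid c_\ell(E)$ forces a congruence condition modulo a power of $\ell$ on the pair $(a,b)$ for each bad prime $\ell$, and summing the resulting densities over $\ell$ (with the curves $E_{a,b}$ of height $< x$ counted by the usual $\asymp x^{5/6}$ asymptotic) yields a bound of the shape $\zeta(p) - 1 = \sum_{m\geq 2} m^{-p}$; this is the kind of tail estimate that appears in \cite{KR21} and in \cite[Lemma 6.4]{ray2021arithmetic}, and I would cite it rather than redo it. For $\bar{\mathfrak{d}}(\mathcal{A}_p)$, the anomalous condition at $p$ is a condition on $(a,b) \bmod p^2$ — by Lemma \ref{init lemma} and Proposition \ref{fibers order p} the relevant residues $\bmod\ p^2$ form the set $\mathfrak{A}_p$ of Definition \ref{ definition Cp Sp} (up to the harmless $j = 0, 1728$ fibres and a $\zeta(10)$-type correction accounting for non-minimal models, which is where the factor $\zeta(10)$ enters). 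Counting pairs $(a,b)$ with $H(E_{a,b}) < x$ lying in a fixed residue class mod $p^2$ gives density $\#\mathfrak{A}_p / p^4$ times the $\zeta(10)$ correction, and since $\#\mathfrak{S}_p$ controls $\#\mathfrak{A}_p$ via the fibre count, this is $\leq \zeta(10)\, \#\mathfrak{S}_p / p^2$. Assembling the three bounds gives the first displayed inequality in \eqref{selmer main formula}.

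For the second inequality, I would simply substitute the bound $\#\mathfrak{S}_p < C p^{3/2} \op{log} p\, (\op{log log} p)^2$ from Proposition \ref{Cp estimate prop}, so that $\zeta(10)\, \#\mathfrak{S}_p / p^2 < C_1\, \op{log} p\, (\op{log log} p)^2 / \sqrt{p}$ with $C_1 = \zeta(10) C$; the other two terms are carried over unchanged. The main obstacle, and the only genuinely delicate point, is the passage from "number of curves $E_{a,b}$ of bounded height in a prescribed residue class mod $p^2$" to a clean density statement uniform enough to extract the factor $\#\mathfrak{A}_p/p^4$: one must check that the lattice-point count over the region $\max\{|a|^3, b^2\} < x$ splits correctly along residue classes mod $p^2$ with an error absorbed into the $O$-terms, and that the sieving-out of non-minimal Weierstrass models contributes exactly the Euler-product factor $\zeta(10)$ (coming from the $\ell^{12} \nmid \Delta$ / minimality conditions) rather than something $p$-dependent. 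All of this is already carried out in \cite[section 4]{KR21} and refined in \cite[Lemma 6.4]{ray2021arithmetic}, so the proof reduces to invoking those counting lemmas together with Proposition \ref{Cp estimate prop} and Conjecture \ref{Del}.
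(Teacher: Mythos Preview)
Your overall strategy matches the paper's --- cover $\mathfrak{F}_p$ by three sets (Sha, Tamagawa, local factor at $p$) and bound each density --- and your handling of the Sha and Tamagawa terms is correct. But there is a genuine error in the anomalous step: you assert that ``$p$ is anomalous for $E$, equivalently $E(\Q_p)[p] \neq 0$'' and that the anomalous condition is a mod-$p^2$ condition whose residues form $\mathfrak{A}_p$. Both claims are false. Anomalous means $\widetilde{E}(\F_p)[p]\neq 0$, which is a mod-$p$ condition on $(a,b)$ characterized by $\mathfrak{S}_p\subset\F_p\times\F_p$; local torsion ($E(\Q_p)[p]\neq 0$) is the strictly \emph{stronger} mod-$p^2$ condition characterized by $\mathfrak{A}_p$ (Proposition~\ref{PiE} and Lemma~\ref{init lemma}). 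Since anomalous is the weaker condition, your set $\mathcal{A}_p$ is \emph{larger} than the local-torsion set, and its density is $\zeta(10)\,\#\mathfrak{S}_p/p^2$, not $\zeta(10)\,\#\mathfrak{A}_p/p^4$. The subsequent inequality $\#\mathfrak{A}_p/p^4\leq\#\mathfrak{S}_p/p^2$ is true but does not repair the argument, because the preceding identification of $\bar{\mathfrak{d}}(\mathcal{A}_p)$ with $\zeta(10)\,\#\mathfrak{A}_p/p^4$ is simply wrong (it undercounts).

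The fix is to drop the detour through $\mathfrak{A}_p$ entirely: the Euler-characteristic term for the \emph{classical} Selmer group is $\#\widetilde{E}(\F_p)$, so one bounds $\bar{\mathfrak{d}}(\mathcal{A}_p)$ directly by $\zeta(10)\,\#\mathfrak{S}_p/p^2$ via a mod-$p$ lattice-point count, exactly as the paper does (citing \cite[Theorem 4.14]{KR21}). The set $\mathfrak{A}_p$ and the mod-$p^2$ analysis belong to Theorem~\ref{th 4.6} on the fine Selmer group, where the relevant Euler-characteristic term is $\op{Tors}_{\Z_p}(D)$ rather than $\#\widetilde{E}(\F_p)$.
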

Moreover, the effective constant $C_1$ can be taken to be $\zeta(10) C$, where $C$ is the constant from Proposition \ref{Cp estimate prop}.
\begin{proof}
The result is essentially a rephrasing of \cite[Theorem 4.3]{KR21}, however, thanks to the minor update in the statement taken from \cite{ray2021arithmetic}, and for the convenience of the reader, we summarize the proof here. Let $E$ be an elliptic curve with good ordinary reduction at $p$, and $f(T)$ be the characteristic element of $\op{Sel}_{p^\infty}(E/\Q_{\op{cyc}})^{\vee}$. Express $f(T)$ as a power series. Suppose that $E\in \mathfrak{F}_p$, then, in particular, $E$ has good ordinary reduction at $p$ and $\op{rank} E(\Q)=0$. On the other hand, it is assumed that $\Sh(E/\Q)[p^\infty]$ is finite, and therefore the Selmer group $\op{Sel}_{p^\infty}(E/\Q)$ is finite. It follows that the constant term $f(0)$ is non-zero $p$-adic integer. By the well known Euler characteristic formula,
\[f(0)\sim \frac{\# \Sh(E/\Q)[p^{\infty}]\times \left(\prod_{l}c_l(E)\right)\times \left(\# \widetilde{E}(\F_p)\right)^2}{\#\left(E(\Q)[p^{\infty}]\right)^2}.\]
In particular, if the three terms $\# \Sh(E/\Q)[p^{\infty}]$, $\left(\prod_{l}c_l(E)\right)$ and $\#\widetilde{E}(\F_p)$ are units in $\Z_p$, then $f(0)$ will be a unit in $\Z_p$. We study the average behaviour of the following quantities for fixed $p$ and varying $E\in \mathcal{E}$,
\begin{enumerate}
    \item $s_p(E):=\#\Sh(E/\Q)[p^{\infty}]$, 
    \item $\tau_p(E):=\prod_l c_l^{(p)}(E)$,
    \item $\delta_p(E):=\#\left(\widetilde{E}(\F_p)[p]\right)$.
\end{enumerate}
Let $\mathcal{E}_1, \mathcal{E}_2$ and $\mathcal{E}_3$ to be the set of \emph{all} elliptic curves $E_{/\Q}$ with good reduction at $p$, such that $p$ divides $s_p(E)$, $\tau_p(E)$ and $\delta_p(E)$ respectively. Note that if $f(0)$ is a $p$-adic unit, then $\op{Sel}_{p^\infty}(E/\Q_{\op{cyc}})=0$. Thus if $E\in \mathfrak{F}_p$, then $f(0)$ is not a unit in $\Z_p$, hence, $E\in \mathcal{E}_1\cup \mathcal{E}_2\cup \mathcal{E}_3$. From the containment $\mathfrak{F}_p\subseteq \bigcup_{i=1}^3 \mathcal{E}_i$, we obtain the following bound:
\[\bar{\mathfrak{d}}(\mathfrak{F}_p)\leq \sum_{i=1}^3 \bar{\mathfrak{d}}(\mathcal{E}_i).\]
Since we assume that (Del-p) is satisfied, we have that 
\[\bar{\mathfrak{d}}(\mathcal{E}_1)\leq \left(1-\prod_{i\geq 1} \left(1-\frac{1}{p^{2i-1}}\right)\right).\]
It follows from \cite[Theorem 4.2]{KR21} or \cite[Corollary 8,8]{hatley2021statistics} that 
\[\bar{\mathfrak{d}}(\mathcal{E}_2)\leq \zeta(p)-1.\]
Next, it follows from \cite[Theorem 4.14]{KR21} and Proposition \ref{Cp estimate prop} (or \cite[Lemma 6.4]{ray2021arithmetic}) that
\[\bar{\mathfrak{d}}(\mathcal{E}_3)\leq \zeta(10) \frac{\# \mathfrak{S}_p}{p^2}<C_1 \frac{\op{log} p(\op{loglog} p)^2}{\sqrt{p}},\] where $C_1=\zeta(10) C$. Putting everything together, the result follows.
 \end{proof}
 \begin{remark}
The result shows that when $p$ is suitably large the density of the set $\mathfrak{F}_p$ is quite small.
 \begin{itemize}
     \item Note that $\sqrt{p}$ grows faster than $\op{log} p (\op{loglog} p)^2$ as $p\rightarrow \infty$, hence, the term $C_1 \frac{\op{log} p(\op{log log} p)^2}{\sqrt{p}}$ goes to $0$ as $p\rightarrow \infty$. However, the exact values of $\# \frac{\mathfrak{S}_p}{p^2}$ can be calculated for small enough primes, and for this, we refer to Table \ref{tab:1} (or, \cite[Table 2]{KR21II}).
     \item The function \[f(p):= \left(1-\prod_{i\geq 1} \left(1-\frac{1}{p^{2i-1}}\right)\right)=\frac{1}{p}+\frac{1}{p^3}-\frac{1}{p^4}+\dots\] is a function whose domain is the set of prime numbers $p$. As a function of $p$, it is asymptotic to $\frac{1}{p}$. In other words, 
     \[\lim_{p\rightarrow \infty} \frac{f(p)}{p^{-1}}=1,\] where the limit is over primes $p$. \par This is easy to prove, however, we provide details here. First, it is easy to see that for any integer $m\geq 1$,
     \[1\geq \prod_{i\geq m}\left(1-\frac{1}{p^{2i-1}}\right)\geq 1-\sum_{i\geq m} \frac{1}{p^{2i-1}}.\]
     Therefore, we have the following bounds
     \begin{equation}\label{bound 1}f(p)= 1-\left(1-\frac{1}{p}\right)\prod_{i\geq 2}\left(1-\frac{1}{p^{2i-1}}\right)\geq 1-\left(1-\frac{1}{p}\right)=\frac{1}{p},\end{equation}
     \begin{equation}\label{bound 2}f(p)= 1-\prod_{i\geq 1}\left(1-\frac{1}{p^{2i-1}}\right)\leq \sum_{i\geq 1} \frac{1}{p^{2i-1}}= \frac{1}{p}+\frac{1}{p(p^2-1)}.\end{equation}
     Combining \eqref{bound 1} and \eqref{bound 2}, we find that 
     \[1\leq \frac{f(p)}{p^{-1}}\leq 1+\frac{1}{(p^2-1)},\] and it thus follows that $f(p)\sim \frac{1}{p}$. Therefore, in particular, $f(p)$ is asymptotically smaller than the term $C_1 \frac{\op{log} p(\op{log log} p)^2}{\sqrt{p}}$ as a function of $p$.
     \item It is easy to see that 
 \[\zeta(p)-1=2^{-p}+\sum_{n\geq 3} n^{-p}< 2^{-p}+\int_{2}^\infty x^{-p}dx=2^{-p}\left(\frac{p+1}{p-1}\right).\]
 \end{itemize}
  \end{remark}
 Thus, Theorem \ref{kundu ray theorem} shows that $\bar{\mathfrak{d}}(\mathfrak{F}_p)\rightarrow 0$ as $p\rightarrow \infty$, and in fact, as a function of $p$ is $O\left(\frac{\op{log}p (\op{log log} p)^2}{\sqrt{p}}\right)$, thereby indicating that given a prime $p$, most elliptic curves $E_{/\Q}$ of rank $0$ and good ordinary reduction at $p$ have $\op{Sel}_{p^\infty}(E/\Q_{\op{cyc}})=0$. In fact, the above remark shows that the contribution of the Tamagawa factors is very little in comparison to that of anomalous primes.

 \subsection{Statistics for the Fine Selmer group in the rank-zero case}
 \par We now turn our attention to the fine Selmer group. In this subsection, we study the case when $\op{rank} E(\Q)=0$. Let $p\geq 5$ be a prime number and let $\mathfrak{B}_p$ be the set of elliptic curves $E_{/\Q}$ such that the following conditions are satisfied:
 \begin{enumerate}
     \item $E$ has good reduction at $p$, 
     \item $\op{rank} E(\Q)=0$,
     \item $\Rfine{\Q_{\op{cyc}}}$ is infinite (equivalently, either $\mufine>0$ or $\lafine>0$).
 \end{enumerate}
 Thus, we no longer impose the good ordinary condition at $p$.
 \begin{theorem}\label{th 4.6}
 Let $p\geq 5$ be a prime and $\mathfrak{B}_p$ be defined as above. Assume that $\Sh(E/\Q)[p^\infty]$ is finite for all elliptic curves $E_{/\Q}$ and that the conjecture (Del-p) is satisfied. Then, we have the following bound on the upper density of $\mathfrak{B}_p$,
 \begin{equation}\label{main bound fine Selmer}
 \begin{split}
     \bar{\mathfrak{d}}(\mathfrak{B}_p)\leq & \zeta(10)\frac{\#\mathfrak{A}_p}{p^4}+ \left(1-\prod_{i\geq 1} \left(1-\frac{1}{p^{2i-1}}\right)\right)+(\zeta(p)-1),\\
     \leq & \frac{2 \zeta(10)}{p} + C_1 \frac{ \op{log}p (\op{log log} p)^2}{p^{3/2}}+\left(1-\prod_{i\geq 1} \left(1-\frac{1}{p^{2i-1}}\right)\right)+(\zeta(p)-1).\\
     \end{split}
 \end{equation}
 Here $C_1$ is explicit constant given by $C_1=\zeta(10) C$ from Corollary \ref{cor frakCp}.  \end{theorem}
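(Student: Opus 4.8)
The plan is to realize $\mathfrak{B}_p$ as a subset of a union of three sets — one governing $p$-divisibility of $\#\Sh(E/\Q)$, one governing $p$-divisibility of the Tamagawa product, and one governing the local torsion condition at $p$ — and to bound the upper density of each. The first two are treated exactly as in the proof of Theorem \ref{kundu ray theorem}; the genuinely new ingredient is the local torsion set, whose density is controlled by $\#\mathfrak{A}_p$ through Corollary \ref{cor frakCp}.

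First I would fix $E\in\mathfrak{B}_p$ and check that Corollary \ref{criterion for mu=lambda=0} applies to it: $E$ has good, hence potentially good, reduction at $p$, one has $\op{rank}E(\Q)=0\le 1$, and the hypotheses of Theorem \ref{wuthrich thm} hold — condition (1) by Kato's theorem, condition (2) because $\Sh(E/\Q)[p^\infty]$ is finite by hypothesis and $\Zhe_{p^\infty}(E/\Q)$ embeds into it, and condition (3) in the rank-zero situation as recalled above. Since $\Rfine{\Q_{\op{cyc}}}$ is infinite, at least one of conditions (2)--(4) of that corollary fails for $E$: either $p\mid c_\ell(E)$ for some $\ell\ne p$, or $\Zhe_{p^\infty}(E/\Q)\ne 0$ (equivalently, by Wuthrich's theorem, $\Sh(E/\Q)[p^\infty]\ne 0$), or $\op{Tors}_{\Z_p}(D)\ne 0$, and in the last case the rank-zero assertion of Proposition \ref{PiE} forces $E(\Q_p)[p]\ne 0$. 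Setting $\mathcal{E}_1=\{E_{/\Q}:\Sh(E/\Q)[p^\infty]\ne 0\}$, $\mathcal{E}_2=\{E_{/\Q}: p\mid\prod_{\ell}c_\ell(E)\}$, and $\mathfrak{T}_p=\{E_{/\Q}:E\text{ has good reduction at }p\text{ and }E(\Q_p)[p]\ne 0\}$, this shows $\mathfrak{B}_p\subseteq\mathcal{E}_1\cup\mathcal{E}_2\cup\mathfrak{T}_p$, so $\bar{\mathfrak{d}}(\mathfrak{B}_p)\le\bar{\mathfrak{d}}(\mathcal{E}_1)+\bar{\mathfrak{d}}(\mathcal{E}_2)+\bar{\mathfrak{d}}(\mathfrak{T}_p)$.

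The estimates $\bar{\mathfrak{d}}(\mathcal{E}_1)\le 1-\prod_{i\ge 1}(1-p^{-(2i-1)})$ and $\bar{\mathfrak{d}}(\mathcal{E}_2)\le\zeta(p)-1$ come from Conjecture (Del-p) and from \cite[Theorem 4.2]{KR21} (or \cite[Corollary 8.8]{hatley2021statistics}), respectively, exactly as in Theorem \ref{kundu ray theorem}. For $\mathfrak{T}_p$ I would argue that, for $p\ge 5$, good reduction at $p$ means $y^2=x^3+ax+b$ already defines a smooth curve $\mathcal{E}_{a,b}$ over $\Z_p$, and by Lemma \ref{init lemma} the condition $E_{a,b}(\Q_p)[p]\ne 0$ depends only on $(a,b)\bmod p^2$ and is equivalent to $(a\bmod p^2,\,b\bmod p^2)\in\mathfrak{A}_p$. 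Hence $\mathfrak{T}_p$ is cut out inside $\mathcal{E}$ by a congruence on the coefficient pair modulo $p^2$ that holds on precisely $\#\mathfrak{A}_p$ of the $p^4$ residue classes. Running the height-counting/sieve argument of \cite[Theorem 4.14]{KR21} with this mod-$p^2$ condition in place of the mod-$p$ condition defining $\mathfrak{S}_p$ (the minimality corrections at the primes $\ell\ne p$ contribute $\prod_{\ell\ne p}(1-\ell^{-10})$, while minimality at $p$ is automatic on $\mathfrak{A}_p$ since $p\nmid\Delta_{A,B}$ precludes $p\mid A$ and $p\mid B$ simultaneously) yields $\bar{\mathfrak{d}}(\mathfrak{T}_p)\le\zeta(10)\,\#\mathfrak{A}_p/p^4$. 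Summing the three bounds gives the first line of \eqref{main bound fine Selmer}, and substituting $\#\mathfrak{A}_p/p^4<2/p+Cp^{-3/2}\log p(\log\log p)^2$ from Corollary \ref{cor frakCp} and setting $C_1=\zeta(10)C$ gives the second.

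I expect the only step requiring genuine work to be the inequality $\bar{\mathfrak{d}}(\mathfrak{T}_p)\le\zeta(10)\,\#\mathfrak{A}_p/p^4$: one must disentangle the global minimality condition on $(a,b)$ from the prescribed congruence modulo $p^2$ and control the lattice-point error terms uniformly in the height, precisely as in \cite[Section 4]{KR21}. Everything else is bookkeeping with results already established above.
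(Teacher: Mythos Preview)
Your proposal is correct and follows essentially the same route as the paper: the same three-set decomposition $\mathfrak{B}_p\subseteq\mathcal{E}_1\cup\mathcal{E}_2\cup\mathcal{E}_4$ (your $\mathfrak{T}_p$ is the paper's $\mathcal{E}_4$), the same appeal to Corollary~\ref{criterion for mu=lambda=0} and Proposition~\ref{PiE}, the same bounds on $\mathcal{E}_1,\mathcal{E}_2$ from Theorem~\ref{kundu ray theorem}, and the same reduction of the local torsion density to $\zeta(10)\,\#\mathfrak{A}_p/p^4$ via Lemma~\ref{init lemma} followed by Corollary~\ref{cor frakCp}. The only cosmetic difference is that the paper computes $\mathfrak{d}(\mathcal{E}_4)$ by comparing counts in $\mathcal{W}$ (all integer pairs) and $\mathcal{E}$ (minimal pairs) directly rather than invoking \cite[Theorem~4.14]{KR21}, but the underlying calculation is the same.
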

 
 \begin{proof}
Consider the following quantities:\begin{enumerate}
    \item $s_p(E):=\#\Sh(E/\Q)[p^{\infty}]$, 
    \item $\tau_p(E):=\prod_l c_l^{(p)}(E)$,
    \item $\beta_p(E):=\#\left(E(\Q_p)[p]\right)$.
\end{enumerate}
Recall from the proof of Theorem \ref{kundu ray theorem} that $\mathcal{E}_1$ and $\mathcal{E}_2$ consist of all elliptic curves $E_{/\Q}$ such that $p$ divides $s_p(E)$ and $\tau_p(E)$ respectively. Let $\mathcal{E}_4$ be the set of all elliptic curves $E_{/\Q}$, with good reduction at $p$, such that $p$ divides $\beta_p(E)$. For these elliptic curves, $p$ is a local torsion prime over $\Q_p$. It follows from Corollary \ref{criterion for mu=lambda=0} that $\mathfrak{B}_p$ is contained in the union $ \mathcal{E}_1\cup \mathcal{E}_2\cup \mathcal{E}_4 $. Hence, 
\begin{equation}\label{E1 to E4 bound}\bar{\mathfrak{d}}(\mathfrak{B}_p)\leq \bar{\mathfrak{d}}(\mathcal{E}_1)+\bar{\mathfrak{d}}(\mathcal{E}_2)+\bar{\mathfrak{d}}(\mathcal{E}_4).\end{equation} From the proof of Theorem \ref{kundu ray theorem}, we have that 
\[\bar{\mathfrak{d}}(\mathcal{E}_1)+\bar{\mathfrak{d}}(\mathcal{E}_2)\leq \left(1-\prod_{i\geq 1} \left(1-\frac{1}{p^{2i-1}}\right)\right)+(\zeta(p)-1).\]
We prove an upper bound for $\bar{\mathfrak{d}}(\mathcal{E}_4)$. We recall that $\mathfrak{A}_p$ is the set of pairs $(A,B)\in \Z/p^2\times \Z/p^2$ such that $p\nmid \Delta_{A,B} \text{ and }\op{rank}_p\left(E_{A,B}(\Z/p^2)\right)=2$. It follows from Lemma \ref{init lemma} that an elliptic curve $E_{A,B}$ is contained in $\mathcal{E}_4$ if and only the reduction of $(A,B)$ modulo $p^2$ lies in $\mathfrak{A}_p$. Thus it would seem that $\mathfrak{d}(\mathcal{E}_4)$ is simply equal to $\frac{\# \mathfrak{A}_p}{\#\left(\Z/p^2\times \Z/p^2\right)}=\frac{\# \mathfrak{A}_p}{p^4}$, but this is not quite the case, since not all pairs $(A,B)\in \Z\times \Z$ are minimal. It is well known that the proportion of all pairs $(A,B)$ that are minimal is $\frac{1}{\zeta(10)}$, see for instance \cite{CS20}. In greater detail, let $\mathcal{W}$ be the set of all pairs of integers $(A,B)$, and $\mathcal{W}_{<x}$ be the subset of pairs such that the height is $<x$. Note that since each elliptic curve $E_{/\Q}$ admits a unique minimal Weierstrass equation, we may identify $\mathcal{E}$ with a subset of $\mathcal{W}$. It is easy to see that 
\[\lim_{x\rightarrow \infty} \frac{\# \mathcal{E}_{4,<x}}{\# \mathcal{W}_{<x}}=\frac{\# \mathfrak{A}_p}{p^4}\] and since 
\[\lim_{x\rightarrow \infty} \frac{\# \mathcal{E}_{<x}}{\# \mathcal{W}_{<x}}=\frac{1}{\zeta(10)},\] it follows that
\[\mathfrak{d}(\mathcal{E}_4)=\lim_{x\rightarrow x}\frac{\# \mathcal{E}_{4,<x}}{\# \mathcal{E}_{<x}}=\zeta(10)\frac{\# \mathfrak{A}_p}{p^4}.\]

Appealing to Corollary \ref{cor frakCp}, we have that 
\[\mathfrak{d}(\mathcal{E}_4)<\frac{2 \zeta(10)}{p} + C_1 p^{-3/2}\op{log}p (\op{log log} p)^2.\] The result now follows from \eqref{E1 to E4 bound}.
 \end{proof}
 Asymptotically in $p$, the bound \eqref{main bound fine Selmer} is \[\frac{2\zeta(10)+1}{p}+O\left(p^{-3/2}\op{log}p (\op{log log} p)^2\right),\] which is significantly smaller than the asymptotic estimate $O\left(p^{-1/2}\op{log}p (\op{log log} p)^2\right)$ from Theorem \ref{kundu ray theorem}. This tells us that the fine Selmer group is finite a lot more often than the classical Selmer group over $\Q_{\op{cyc}}$, since we have saved on an entire factor of $p^{1/2} \op{log}p (\op{log log} p)^2$ in our estimates.
  \subsection{Statistics for the Fine Selmer group in the rank-one case}
  \par Let $E_{/\Q}$ be an elliptic curve with positive Mordell Weil rank $r=\op{rank}E(\Q)$. Set $\widehat{E(\Q_p)}$ to be the $p$-adic completion of $E(\Q_p)$ given by $\widehat{E(\Q_p)}:=\varprojlim_n \frac{E(\Q_p)}{p^n E(\Q_p)}$. Recall that $D$ is the cokernel of the map $E(\Q)\otimes \Z_p\rightarrow \widehat{E(\Q_p)}$. The group $\widehat{E(\Q_p)}$ decomposes into a direct sum $\Z_p\oplus T$, where $T$ is the torsion subgroup of $\widehat{E(\Q_p)}$ (see \cite[Lemma 2.1, (iv)]{hiranouchi2019local}). Note that $T$ is non-zero when $p$ is a local torsion prime. The summand $\Z_p$ arises from the formal group of $E$. Write $E(\Q)\otimes \Z_p= \Z_p^r\oplus T'$, where $T'$ is a finite group. The map $E(\Q_p)\otimes \Z_p\rightarrow \widehat{E(\Q_p)}$ gives rise to a map 
  \[\Z_p^r\oplus T'\rightarrow \Z_p\oplus T.\] The map on the first factors is denoted by $\phi_E:\Z_p^r\rightarrow \Z_p$. This is the composite of the maps \[\Z_p^r\hookrightarrow \Z_p^r\oplus T'\rightarrow \Z_p\oplus T\rightarrow \Z_p,\] where the first map is the inclusion into the first factor and the last map is the projection on the first factor. 
  
  \begin{lemma}\label{tors d criterion}
  Let $E$ be an elliptic curve and $p$ a prime as above and suppose that $\op{Tors}_{\Z_p}(D)\neq 0$. Then, at least one of the following hold:
  \begin{enumerate}
      \item $p$ is a local torsion prime of $E$, 
      \item $\phi_E$ is not surjective. 
  \end{enumerate}
  \end{lemma}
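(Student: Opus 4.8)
The plan is to read the $\Z_p$-module structure of $D$ off a block-matrix description of the natural map $\psi\colon E(\Q)\otimes\Z_p\to\widehat{E(\Q_p)}$ and then split into cases according to which of its two diagonal blocks fails to be invertible. With respect to the decompositions $E(\Q)\otimes\Z_p=\Z_p\oplus T'$ and $\widehat{E(\Q_p)}=\Z_p\oplus T$ fixed above, the $T'\to\Z_p$ component of $\psi$ vanishes because $T'$ is finite while $\Z_p$ is torsion free, so $\psi$ is block lower triangular with diagonal blocks $\phi_E\colon\Z_p\to\Z_p$ and some homomorphism $\gamma\colon T'\to T$ (the off-diagonal block $\Z_p\to T$ will be irrelevant). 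Two points here deserve attention. First, $\gamma$ is injective: it is the restriction of $E(\Q)\hookrightarrow E(\Q_p)$ to $p$-power torsion followed by $E(\Q_p)\to\widehat{E(\Q_p)}$, and the kernel of this last map is only the prime-to-$p$ torsion of $E(\Q_p)$ (recall from the proof of Proposition~\ref{PiE} that $E_0(p\Z_p)$ is torsion free, so the $p$-part of $E(\Q_p)_{\mathrm{tors}}$ injects into $\widetilde E(\F_p)$ and survives completion). Second, the splitting $\widehat{E(\Q_p)}=\Z_p\oplus T$ is not canonical, so I would note that the map $\psi$ induces on the canonical quotients by torsion coincides with $\phi_E$ up to a unit of $\Z_p$, which makes the hypothesis ``$\phi_E$ is an isomorphism'' unambiguous. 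Since $\phi_E$ is also injective (a point of infinite order has non-torsion image in $\widehat{E(\Q_p)}$), $\psi$ is injective, its source and target both have $\Z_p$-rank $1$, so $D=\coker\psi$ is finite and $\op{Tors}_{\Z_p}(D)=D$.

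Next I would compute $D$. Applying the snake lemma to the short exact sequences $0\to T'\to E(\Q)\otimes\Z_p\to\Z_p\to0$ and $0\to T\to\widehat{E(\Q_p)}\to\Z_p\to0$, with vertical maps $\gamma,\psi,\phi_E$ and using $\ker\phi_E=0$, produces an exact sequence
\[0\longrightarrow T/\gamma(T')\longrightarrow D\longrightarrow \Z_p/\phi_E(\Z_p)\longrightarrow 0;\]
equivalently one just computes the cokernel of the lower-triangular block matrix by hand. Thus $D$ is an extension of $\Z_p/\phi_E(\Z_p)$ by $T/\gamma(T')$, and is nonzero exactly when at least one of these two groups is nonzero.

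Finally, the case analysis. Assume $\op{Tors}_{\Z_p}(D)=D\neq0$. If $\Z_p/\phi_E(\Z_p)\neq0$, then $\phi_E$ is not surjective, hence not an isomorphism, and alternative~(2) holds. In the remaining case $\phi_E$ is an isomorphism, so the exact sequence forces $D\cong T/\gamma(T')\neq 0$, whence $T\neq0$; since $T$ is the torsion subgroup of $\widehat{E(\Q_p)}$, which is precisely the $p$-power torsion subgroup of $E(\Q_p)$, this says $E(\Q_p)$ contains a point of order $p$, i.e.\ $p$ is a local torsion prime, which is the remaining alternative of the lemma. The step I expect to require the most care is exactly this: isolating the displayed extension and checking the two injectivity statements and the splitting-independence of ``$\phi_E$ is an isomorphism'', after which the dichotomy is formal; along the way one should also confirm, via the good-reduction sequence $0\to E_0(p\Z_p)\to E(\Q_p)\to\widetilde E(\F_p)\to0$ together with Lemma~\ref{init lemma}, that ``$T\neq0$'' is genuinely equivalent to ``$p$ is a local torsion prime'' and not merely to ``$p$ is anomalous''.
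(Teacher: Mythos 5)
Your argument is mathematically sound and, at its core, is the same computation the paper makes: decompose the map $E(\Q)\otimes \Z_p\to \widehat{E(\Q_p)}$ along the two splittings, observe that the finite summand $T'$ must die in the torsion-free factor $\Z_p$, and read off that $D$ is an extension of $\op{coker}\phi_E$ by a quotient of $T$. The paper's proof is simply the short form of this: it assumes $p$ is not a local torsion prime, so $T=0$ and $D$ is a quotient of $\op{coker}\phi_E$, whence $\op{Tors}_{\Z_p}(D)\neq 0$ forces $\phi_E$ not to be an isomorphism. Your snake-lemma refinement, together with the injectivity of $\gamma$ and $\phi_E$ and the remark on splitting-independence, is more detail than the lemma needs, but it is correct and harmless.

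The one point you should address explicitly rather than in passing is the direction of alternative (1). In the second branch of your case analysis you conclude that $p$ \emph{is} a local torsion prime and call this ``the remaining alternative of the lemma''; but the lemma as printed says ``$p$ is \emph{not} a local torsion prime''. What you proved --- and what the paper's own proof establishes, and what is actually invoked in the proof of Theorem \ref{th 4.8}, where $\op{Tors}_{\Z_p}(D)\neq 0$ must place $E$ in $\mathcal{E}_4\cup\mathcal{E}_5$ --- is the statement with that negation removed: if $\op{Tors}_{\Z_p}(D)\neq 0$, then either $p$ is a local torsion prime or $\phi_E$ fails to be an isomorphism. Read literally, the printed statement is not what your argument yields; indeed your own exact sequence $0\to T/\gamma(T')\to D\to \op{coker}\phi_E\to 0$ shows it would fail for any rank-one curve with $E(\Q)[p]=0$, $E(\Q_p)[p]\neq 0$ and $\phi_E$ an isomorphism. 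So you have proven the intended (and used) version of the lemma; state explicitly that item (1) should read ``$p$ is a local torsion prime of $E$'' instead of silently substituting it.
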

  \begin{proof}
  Suppose that $p$ is not a local torsion prime. Then, it is clear $D$ is a quotient of $\op{cok} \phi_E$. Hence, if $D\neq 0$, it follows that $\op{cok} \phi_E$ is non-zero. This implies that $\phi_E$ is not surjective.
  \end{proof}
  \par We shift our attention to the case when our elliptic curves $E_{/\Q}$ have Mordell-Weil rank $1$. In this setting, the $p$-adic regulator $\op{Reg}\left(\Rfine{\Q}\right)=1$, and this makes it possible to study this case. Since $r=1$, the map $\phi_E:\Z_p\rightarrow \Z_p$ is surjective if and only if it is an isomorphism. We let $\mathcal{E}_5$ be the set of elliptic curves $E_{/\Q}$ such that 
  \begin{enumerate}
      \item $\op{rank} E(\Q)=1$,
      \item the map $\phi_E$ is \emph{not} an isomorphism.
  \end{enumerate}

  The upper density $\bar{\mathfrak{d}}(\mathcal{E}_5)$ shall play a role in our results. We are unable to prove any satisfactory bound on $\bar{\mathfrak{d}}(\mathcal{E}_5)$ which is why we resort to heuristics. The probablity that an elliptic curve has rank $1$ is expected to be $\frac{1}{2}$. This follows from the rank distribution conjecture. The map $\phi_E: \Z_p\rightarrow \Z_p$ is multiplication by an element $\alpha\in \Z_p$, and is an isomorphism precisely when $\alpha$ is a unit. Thus, the probablity that $\phi_E$ is \emph{not} an isomorphism can be expected to be $\frac{1}{p}$. Thus, the heuristic indicates that $\bar{\mathfrak{d}}(\mathcal{E}_5)=\frac{1}{2p}$. We state our results in terms of the the density $\bar{\mathfrak{d}}(\mathcal{E}_5)$, and do not explicitly assume that this heuristic is correct. Let $p\geq 5$ be a prime number and let $\mathfrak{D}_p$ be the set of elliptic curves $E_{/\Q}$ such that the following conditions are satisfied:
 \begin{enumerate}
     \item $E$ has good reduction at $p$, 
     \item $\op{rank} E(\Q)=1$,
     \item $\Rfine{\Q_{\op{cyc}}}$ is infinite.
 \end{enumerate}
 
 \begin{theorem}\label{th 4.8}
 Let $p\geq 5$ be a prime and $\mathfrak{D}_p$ be defined as above. Assume that $\Sh(E/\Q)[p^\infty]$ is finite for all elliptic curves $E_{/\Q}$ and that the conjecture (Del-p) is satisfied. Then, we have the following bound on the upper density of $\mathfrak{D}_p$,
 \begin{equation}\label{main bound fine Selmer rank 1}
 \begin{split}
     \bar{\mathfrak{d}}(\mathfrak{D}_p)\leq & \bar{\mathfrak{d}}(\mathcal{E}_5)+\zeta(10)\frac{\#\mathfrak{A}_p}{p^4}+ \left(1-\prod_{i\geq 1} \left(1-\frac{1}{p^{2i-1}}\right)\right)+(\zeta(p)-1),\\
     \leq & \bar{\mathfrak{d}}(\mathcal{E}_5)+\frac{2 \zeta(10)}{p} + C_1 \frac{ \op{log}p (\op{log log} p)^2}{p^{3/2}}+\left(1-\prod_{i\geq 1} \left(1-\frac{1}{p^{2i-1}}\right)\right)+(\zeta(p)-1).\\
     \end{split}
 \end{equation}
 Here $C_1$ is explicit constant given by $C_1=\zeta(10) C$ from Corollary \ref{cor frakCp}.  \end{theorem}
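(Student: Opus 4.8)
The plan is to transcribe the proof of Theorem \ref{th 4.6} almost verbatim, adding one set to account for the rank-one hypothesis. Fix $p \geq 5$ and let $E \in \mathfrak{D}_p$, so $\op{rank} E(\Q) = 1$ and $E$ has good, hence potentially good, reduction at $p$. As in the rank-zero case, the hypotheses of Theorem \ref{wuthrich thm} are available: $\Rfine{\Q_{\op{cyc}}}$ is cotorsion by Kato, $\Zhe_{p^\infty}(E/\Q)$ is finite since $\Sh(E/\Q)$ is assumed finite, and the fine regulator is trivial because $\op{rank} E(\Q) \leq 1$ (cf.\ the discussion following Theorem \ref{wuthrich thm}); in particular $r = \max(0, \op{rank} E(\Q) - 1) = 0$. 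Hence $\Rfine{\Q_{\op{cyc}}}$ is infinite precisely when the constant term of its characteristic element fails to be a $p$-adic unit, and by Corollary \ref{criterion for mu=lambda=0} this forces at least one of the three conditions $p \nmid c_\ell(E)$ for all $\ell \neq p$, $\Zhe_{p^\infty}(E/\Q) = 0$, $\op{Tors}_{\Z_p}(D) = 0$ to fail.

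The next step is to sort these failures into the sets $\mathcal{E}_i$ used earlier. Exactly as in the proof of Theorem \ref{kundu ray theorem}, failure of $p \nmid c_\ell(E)$ gives $E \in \mathcal{E}_2$, while $\Zhe_{p^\infty}(E/\Q) \neq 0$ forces $\Sh(E/\Q)[p^\infty] \neq 0$ (in fact the two groups coincide since $\op{rank} E(\Q) = 1 > 0$, by Wuthrich's theorem), so $E \in \mathcal{E}_1$. The genuinely new input is the condition $\op{Tors}_{\Z_p}(D) \neq 0$, where Lemma \ref{tors d criterion} does the work: such a curve either has $p$ as a local torsion prime, in which case Lemma \ref{init lemma} places $E$ in $\mathcal{E}_4$, or has $\phi_E$ not an isomorphism, in which case $E \in \mathcal{E}_5$ (recall $\op{rank} E(\Q) = 1$). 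Therefore $\mathfrak{D}_p \subseteq \mathcal{E}_1 \cup \mathcal{E}_2 \cup \mathcal{E}_4 \cup \mathcal{E}_5$, whence
\[\bar{\mathfrak{d}}(\mathfrak{D}_p) \leq \bar{\mathfrak{d}}(\mathcal{E}_1) + \bar{\mathfrak{d}}(\mathcal{E}_2) + \bar{\mathfrak{d}}(\mathcal{E}_4) + \bar{\mathfrak{d}}(\mathcal{E}_5).\]

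To conclude, I would insert the bounds already established: $\bar{\mathfrak{d}}(\mathcal{E}_1) \leq 1 - \prod_{i \geq 1}(1 - p^{-(2i-1)})$ from the conjecture (Del-p), $\bar{\mathfrak{d}}(\mathcal{E}_2) \leq \zeta(p) - 1$ from \cite{KR21}, and $\mathfrak{d}(\mathcal{E}_4) = \zeta(10)\,\#\mathfrak{A}_p/p^4 < \frac{2\zeta(10)}{p} + C_1 p^{-3/2}\op{log}p (\op{log log} p)^2$ exactly as computed in the proof of Theorem \ref{th 4.6} via Corollary \ref{cor frakCp} (the minimality correction factor $1/\zeta(10)$ for Weierstrass pairs is inserted in the same way). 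Keeping $\bar{\mathfrak{d}}(\mathcal{E}_5)$ as an unevaluated term then yields both inequalities of \eqref{main bound fine Selmer rank 1}. The one real obstacle is $\bar{\mathfrak{d}}(\mathcal{E}_5)$ itself: bounding how often the multiplication map $\phi_E \colon \Z_p \to \Z_p$ attached to a rank-one curve fails to be an isomorphism is not within reach unconditionally (the heuristic value being $1/(2p)$), which is precisely why the statement retains this density explicitly rather than replacing it with an explicit number; everything else is a routine adaptation of the rank-zero argument.
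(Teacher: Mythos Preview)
Your proposal is correct and follows essentially the same approach as the paper: both arguments establish the containment $\mathfrak{D}_p \subseteq \mathcal{E}_1 \cup \mathcal{E}_2 \cup \mathcal{E}_4 \cup \mathcal{E}_5$ by invoking Corollary~\ref{criterion for mu=lambda=0} and then Lemma~\ref{tors d criterion} to handle the $\op{Tors}_{\Z_p}(D)\neq 0$ case, and then import the density bounds already obtained in the proof of Theorem~\ref{th 4.6}. Your write-up is somewhat more explicit about verifying the hypotheses of Wuthrich's theorem and about why $\Zhe_{p^\infty}(E/\Q)\neq 0$ lands $E$ in $\mathcal{E}_1$, but the skeleton is identical.
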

 
 \begin{proof}
 Let $E$ be an elliptic curve in $\mathfrak{D}_p$. Consider the following quantities:\begin{enumerate}
    \item $s_p(E):=\#\Sh(E/\Q)[p^{\infty}]$, 
    \item $\tau_p(E):=\prod_l c_l^{(p)}(E)$,
    \item $\beta_p(E):=\#\left(E(\Q_p)[p]\right)$.
\end{enumerate}
Recall that $\mathcal{E}_1$, $\mathcal{E}_2$ and $\mathcal{E}_4$ consist of all elliptic curves $E_{/\Q}$ such that $p$ divides $s_p(E)$, $\tau_p(E)$ and $\beta_p(E)$ respectively. It follows from Corollary \ref{criterion for mu=lambda=0} that $E$ is either contained in the union $\mathcal{E}_1\cup \mathcal{E}_2$ or $\op{Tors}_{\Z_p}(D)\neq 0$. It follows from Lemma \ref{tors d criterion} that if $\op{Tors}_{\Z_p}(D)\neq 0$, then $E\in \mathcal{E}_4\cup \mathcal{E}_5$. Therefore, combining the above observations, we find that $\mathfrak{D}_p$ is contained in the union $\mathcal{E}_1\cup \mathcal{E}_2\cup \mathcal{E}_4\cup \mathcal{E}_5$, and hence, 
\[\bar{\mathfrak{d}}(\mathfrak{D}_p)\leq \bar{\mathfrak{d}}(\mathcal{E}_1)+\bar{\mathfrak{d}}(\mathcal{E}_2)+\bar{\mathfrak{d}}(\mathcal{E}_4)+\bar{\mathfrak{d}}(\mathcal{E}_5).\] From the proof of Theorem \ref{th 4.6}, 
\[\begin{split}\bar{\mathfrak{d}}(\mathcal{E}_1)+\bar{\mathfrak{d}}(\mathcal{E}_2)+\bar{\mathfrak{d}}(\mathcal{E}_4)\leq & \zeta(10)\frac{\#\mathfrak{A}_p}{p^4}+ \left(1-\prod_{i\geq 1} \left(1-\frac{1}{p^{2i-1}}\right)\right)+(\zeta(p)-1)\\
< & C_1 \frac{ \op{log}p (\op{log log} p)^2}{p^{3/2}}+ \left(1-\prod_{i\geq 1} \left(1-\frac{1}{p^{2i-1}}\right)\right)+(\zeta(p)-1),\end{split}\]and the result follows from this.
 \end{proof}
 
\section{Sieve Methods and the distribution of Local torsion primes}\label{s 5}
\par The distribution of local torsion primes is closely connected to the behavior of Iwasawa invariants of elliptic curves. Let $E_{/\Q}$ be an elliptic curve with Mordell-Weil rank zero for which the Tate-Shafarevich group $\Sh(E/\Q)$ is finite. Then Corollary \ref{ cor 2.9} asserts that $\mufine=0$ and $\lafine=0$ for all primes $p\notin \Pi_{lt, E}\cup \mathfrak{S}$. Here, $\mathfrak{S}$ is the set of primes that divide $2\#\Sh(E/\Q)\times \prod_{\ell\neq p} c_\ell(E)$. In particular, if the set $\Pi_{lt,E}$ is finite, then $\mufine=0$ and $\lafine=0$ for all but finitely many primes $p$.
\par In this section we shall fix a number $Y>0$, and prove results about the expectation for $\#\Pi_{lt,E}(Y)$, as $E$ ranges over all elliptic curves over $\Q$. We shall prove our results for elliptic curves on average. For this purpose, the shall employ the use of a certain refined version of the Large sieve inequality due to Bombieri and Davenport \cite{bombieri1969large}. The version that we use has previously been used in various other contexts, see \cite{duke1997elliptic, jones2010almost}. The version we require is slightly more general than the sieve used in the aforementioned works. Given real numbers $a\leq b$, let $[a,b]$ denote the closed interval from $a$ to $b$. Given a tuple of real numbers $(M_1, \dots, M_k)\in \mathbb{R}^k$ and a tuple of positive real numbers $(N_1, \dots, N_k)\in \mathbb{R}_{\geq 0}^k$, the product $\prod_{j=1}^r [M_j, M_j+N_j]$ consists of all tuples of real numbers $(x_1, \dots, x_k)$ such that $M_j\leq x_j\leq M_j+N_j$ for all $j$. We shall refer to such a product of intervals as a \emph{box} in $\mathbb{R}^k$. For each point $n\in B\cap \Z^k$ let $c(n)$ be a complex number. Define a function on $\mathbb{R}^k$ as follows \[S(x):=\sum_{n\in B} c(n) e(n\cdot x),\] where $x\in \mathbb{R}^k$ and $e(z):=e^{2\pi i z}$.

\begin{definition}
Let $A\subset \mathbb{R}^k$ be a finite subset and let $\delta=(\delta_1, \dots, \delta_k)$ be a tuple of positive numbers. We say that $A$ is \emph{$\delta$ well-spaced} if for all $\alpha=(\alpha_1,\dots, \alpha_k)\in A$ and $\beta=(\beta_1,\dots, \beta_k)\in A$ such that $\alpha\neq \beta$, 
\[\op{max}\left\{\frac{|\alpha_i-\beta_i|}{\delta_i}\mid i=1,\dots, k\right\}\geq 1.\]
\end{definition}
In other words, for each pair $(\alpha, \beta)$ such that $\alpha\neq \beta$, there is a coordinate $i$ such that $|\alpha_i-\beta_i|\geq \delta_i$. The following generalization of the Large sieve inequality is due to Huxley, see \cite[Theorem 1]{huxley1968large}. \begin{theorem}[Huxley] \label{huxleythm}
Let $A$ be a $\delta$ well-spaced finite set of vectors in $\mathbb{R}^k$ and let $B=\prod_{j=1}^k [M_j, M_j+N_j]$. Then, we have the following bound
\[\sum_{\alpha\in A} |S(\alpha)|^2\leq \left(\prod_{j=1}^k (N_j^{1/2}+\delta_j^{-1/2})^2\right)\sum_{n\in B} |c(n)|^2.\]
\end{theorem}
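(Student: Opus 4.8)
The plan is to run the classical large-sieve argument in two moves: first pass to the dual inequality, then annihilate the off-diagonal contribution with a product majorant via Poisson summation, using the $\delta$-well-spacedness exactly once per pair of points.

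\emph{Duality.} By the duality principle for finite bilinear forms, the asserted bound $\sum_{\alpha\in A}|S(\alpha)|^2\le\Delta\sum_{n}|c(n)|^2$, with $\Delta:=\prod_{j=1}^k(N_j^{1/2}+\delta_j^{-1/2})^2$, holds for every coefficient family $(c(n))_{n\in B\cap\Z^k}$ if and only if the dual inequality
\[
\sum_{n\in B\cap\Z^k}\Bigl|\sum_{\alpha\in A}z_\alpha\,e(n\cdot\alpha)\Bigr|^2\ \le\ \Delta\sum_{\alpha\in A}|z_\alpha|^2
\]
holds for every $(z_\alpha)_{\alpha\in A}$; so I would reduce to proving this dual form, and write $T(n):=\sum_{\alpha\in A}z_\alpha e(n\cdot\alpha)$.

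\emph{Product majorant and Poisson summation.} For each $j$ I would choose a non-negative $F_j\colon\mathbb{R}\to\mathbb{R}$ with $F_j\ge\mathbf{1}_{[M_j,M_j+N_j]}$ pointwise, with Fourier transform $\widehat{F_j}$ supported in $(-\delta_j,\delta_j)$, and with total mass $\widehat{F_j}(0)=\int_{\mathbb{R}}F_j$ as small as possible: the Beurling--Selberg extremal majorant achieves $\widehat{F_j}(0)=N_j+\delta_j^{-1}$, while the more elementary Fej\'er-type construction available to Huxley already gives $\widehat{F_j}(0)\le(N_j^{1/2}+\delta_j^{-1/2})^2$, and in either case $\prod_j\widehat{F_j}(0)\le\Delta$. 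Putting $F:=\prod_j F_j$, so $F\ge\mathbf{1}_B$ on $\mathbb{R}^k$, I would estimate
\[
\sum_{n\in B\cap\Z^k}|T(n)|^2\ \le\ \sum_{n\in\Z^k}F(n)\,|T(n)|^2\ =\ \sum_{\alpha,\beta\in A}z_\alpha\overline{z_\beta}\prod_{j=1}^k\Bigl(\sum_{m\in\Z}F_j(m)\,e\bigl(m(\alpha_j-\beta_j)\bigr)\Bigr),
\]
and by Poisson summation the inner sum over $m$ equals $\sum_{\ell\in\Z}\widehat{F_j}\bigl(\ell-(\alpha_j-\beta_j)\bigr)$. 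Since $\widehat{F_j}$ is supported in $(-\delta_j,\delta_j)$, the $j$-th factor vanishes unless $\alpha_j-\beta_j$ lies within $\delta_j$ of $\Z$, and on the diagonal $\alpha=\beta$ it is exactly $\widehat{F_j}(0)$. For $\alpha\ne\beta$, $\delta$-well-spacedness supplies a coordinate $i$ in which $\alpha_i-\beta_i$ is bounded away from $\Z$ by $\delta_i$, so the $i$-th factor is $0$ and the entire $(\alpha,\beta)$-term drops out. Hence only the diagonal survives, the right-hand side is $\bigl(\prod_j\widehat{F_j}(0)\bigr)\sum_\alpha|z_\alpha|^2\le\Delta\sum_\alpha|z_\alpha|^2$, and the duality step converts this into the stated inequality (it in fact yields the slightly stronger bound $\prod_j(N_j+\delta_j^{-1})$ when the extremal majorant is used).

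\emph{Main obstacle.} The one genuinely analytic ingredient is the majorant: one needs a function dominating $\mathbf{1}_{[M_j,M_j+N_j]}$ everywhere, with Fourier support confined to $(-\delta_j,\delta_j)$ and mass at most $(N_j^{1/2}+\delta_j^{-1/2})^2$. The optimal mass $N_j+\delta_j^{-1}$ is the content of the Beurling--Selberg theorem, and the slightly lossy constant in the statement is precisely what the cruder classical construction produces, which is presumably why Huxley records the bound in this form. The only other thing to be careful about is the multi-dimensional bookkeeping: $\delta$-well-spacedness gives separation in merely one coordinate per pair, but because the kernel factors over coordinates, a single vanishing factor annihilates the whole off-diagonal term, so one coordinate of separation is all that is required.
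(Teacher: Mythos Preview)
The paper does not give a proof of this theorem; it simply records the statement and cites Huxley's original article \cite[Theorem~1]{huxley1968large}. So there is no ``paper's own proof'' to compare against. Your argument---dualize, dominate the indicator of $B$ by a product of one-variable majorants with compact Fourier support, apply Poisson summation, and use the well-spacing to kill the off-diagonal---is the standard modern route and is essentially correct.

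One point deserves a remark. Your Poisson step needs, for each off-diagonal pair $(\alpha,\beta)$, a coordinate $i$ with $\|\alpha_i-\beta_i\|\ge\delta_i$, where $\|\cdot\|$ is the distance to the nearest integer (so that $\widehat{F_i}(\ell-(\alpha_i-\beta_i))=0$ for \emph{every} $\ell\in\Z$). The paper's Definition of $\delta$-well-spaced is phrased with the ordinary absolute value $|\alpha_i-\beta_i|\ge\delta_i$, which is not literally the same condition: e.g.\ $|\alpha_i-\beta_i|$ could be close to $1$. This is an imprecision in the paper's definition rather than in your argument---Huxley and every standard source state well-spacedness modulo $\Z$---and in the paper's only application (Theorem~\ref{mean square}, where $A$ consists of rationals $a/p^N\in[0,1)$ with $p\le x$) the two notions coincide, since any nonzero coordinate difference is a rational in $(-1,1)$ with denominator at most $x^{2N}$ and hence has $\|\cdot\|\ge x^{-2N}$. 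It would be worth saying this explicitly if you write the proof out in full.
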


\par Fix a number $N\in \Z_{\geq 1}$ and for each prime $p$, we fix a subset $\Omega(p)$ of $\left(\Z/p^N \Z \right)^k$ for each prime $p$. For each integral vector $m\in \Z^k$, real number $x>0$, we define \[P(x;m):=\#\left\{p\leq x \mid m\mod{p^N}\in \Omega(p)\right\}\] and 
\[P(x):=\sum_{p\leq x} \#\Omega(p) p^{-Nk}.\]
In the statement of the result below, $x$ will refer to a positive real number.
\begin{theorem}\label{mean square}
For each prime $p$, let $\Omega(p)$ be a subset of $\left(\Z/p^N\Z\right)^k$. Let \[B=\prod_{j=1}^k [M_j, M_j+N_j]\] be a box in $\mathbb{R}^k$ such that $N_j\geq x^{2N}$ for all $j$. Let $V(B)$ be the volume of the box $V(B)=\prod_j N_j$, then
\[\sum_{m\in B\cap \Z^k} \left(P(x;m)-P(x)\right)^2\ll_{k,N} V(B) P(x).\]
 \end{theorem}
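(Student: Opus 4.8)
The plan is to detect the congruence condition defining $P(x;m)$ with additive characters and then apply Huxley's large sieve (Theorem~\ref{huxleythm}) in its dual form; this is the familiar shape of such ``mean square'' estimates for elliptic curves on average.

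First I would carry out the character expansion. Orthogonality of the characters of the finite abelian group $(\Z/p^N\Z)^k$ gives, for each prime $p\le x$ and each $m\in\Z^k$,
\[\mathbf{1}\bigl[m\bmod p^N\in\Omega(p)\bigr]=\frac{1}{p^{Nk}}\sum_{t\in(\Z/p^N\Z)^k}\widehat{\Omega}(p,t)\,e\!\left(\frac{t\cdot m}{p^N}\right),\qquad \widehat{\Omega}(p,t):=\sum_{a\in\Omega(p)}e\!\left(\frac{-t\cdot a}{p^N}\right).\]
Summing over $p\le x$ and isolating the term $t=0$, whose total contribution is exactly $\sum_{p\le x}\#\Omega(p)\,p^{-Nk}=P(x)$, yields
\[P(x;m)-P(x)=\sum_{\alpha\in A}d(\alpha)\,e(m\cdot\alpha),\]
where $A:=\{\,t/p^N:\ p\le x\text{ prime},\ t\in(\Z/p^N\Z)^k\setminus\{0\}\,\}$, with representatives $t_i\in\{0,\dots,p^N-1\}$, and $d(t/p^N):=p^{-Nk}\widehat{\Omega}(p,t)$. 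A short check that $(p,t)\mapsto t/p^N$ is injective (if $t_iq^N=t_i'p^N$ with $p\ne q$ and $0\le t_i<p^N$, then $p^N\mid t_i$, forcing $t_i=0$, and similarly $t_i'=0$) shows the $d(\alpha)$ are well defined.

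Next I would verify that $A$ is $\delta$-well spaced with $\delta=(x^{-2N},\dots,x^{-2N})$. For distinct elements $t/p^N,t'/q^N\in A$: if $p=q$ then $t\ne t'$ in $(\Z/p^N\Z)^k$, so some coordinate satisfies $|t_i-t_i'|\ge 1$, hence $|t_i/p^N-t_i'/q^N|\ge p^{-N}\ge x^{-2N}$; if $p\ne q$, pick a coordinate $i$ with $t_i\not\equiv 0\pmod{p^N}$ (possible since $t\ne 0$), so $t_iq^N-t_i'p^N\equiv t_iq^N\not\equiv 0\pmod{p^N}$ is a nonzero integer and $|t_i/p^N-t_i'/q^N|=|t_iq^N-t_i'p^N|/(p^Nq^N)\ge (p^Nq^N)^{-1}\ge x^{-2N}$. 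The hypothesis $N_j\ge x^{2N}$ then gives $\delta_j^{-1/2}=x^{N}\le N_j^{1/2}$, whence $(N_j^{1/2}+\delta_j^{-1/2})^2\le 4N_j$, so the large-sieve factor is bounded by $\prod_{j}(N_j^{1/2}+\delta_j^{-1/2})^2\le 4^k V(B)$.

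Finally I would use the dual of Theorem~\ref{huxleythm}: by the duality principle for $\ell^2$-bounded bilinear forms (equivalently, passing to the adjoint operator), that inequality is equivalent to
\[\sum_{n\in B\cap\Z^k}\left|\sum_{\alpha\in A}b(\alpha)\,e(n\cdot\alpha)\right|^2\le\left(\prod_{j=1}^{k}(N_j^{1/2}+\delta_j^{-1/2})^2\right)\sum_{\alpha\in A}|b(\alpha)|^2\]
for arbitrary coefficients $b(\alpha)$. Applying this with $b=d$, using that $P(x;m)-P(x)$ is real, and inserting the bound on the large-sieve factor gives
\[\sum_{m\in B\cap\Z^k}\bigl(P(x;m)-P(x)\bigr)^2\le 4^k\,V(B)\sum_{\alpha\in A}|d(\alpha)|^2.\]
Parseval's identity for $(\Z/p^N\Z)^k$ gives $\sum_{t}|\widehat{\Omega}(p,t)|^2=p^{Nk}\#\Omega(p)$, so dropping the $t=0$ term,
\[\sum_{\alpha\in A}|d(\alpha)|^2=\sum_{p\le x}p^{-2Nk}\sum_{t\ne 0}|\widehat{\Omega}(p,t)|^2\le\sum_{p\le x}\#\Omega(p)\,p^{-Nk}=P(x),\]
which yields the asserted bound with implied constant $4^{k}$. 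The argument has no serious obstacle: the only steps requiring care are calibrating $\delta_j=x^{-2N}$ to the hypothesis $N_j\ge x^{2N}$ so that the sieve constant is $O_k(V(B))$, and remembering to pass to the \emph{dual} of Huxley's inequality rather than applying Theorem~\ref{huxleythm} directly, since here one sums over the box $B$ rather than over a well-spaced set.
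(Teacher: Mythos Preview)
Your proof is correct and follows exactly the approach the paper intends: the paper's own proof only specifies the well-spaced set $A=\bigcup_{p\le x}A_p$ with $\delta_i=x^{-2N}$ and then defers the remaining details to Jones's thesis (for $N=1$), and you have filled in precisely those details---the character expansion isolating $P(x)$ as the $t=0$ term, the verification of $\delta$-well-spacedness, the passage to the dual form of Theorem~\ref{huxleythm}, and the Parseval bound $\sum_{\alpha}|d(\alpha)|^2\le P(x)$. Your explicit constant $4^k$ and the observation that one must use the \emph{dual} inequality (summing over $B$ rather than $A$) are both correct and make the argument self-contained.
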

The notation above means that there is a constant $C>0$ which depends only on $k$ and $N$, and not on $x$, such that $\sum_{m\in B\cap \Z^k} \left(P(x;m)-P(x)\right)^2\leq C V(B) P(x)$ for all real numbers $x>0$.
\begin{proof}
Let $A=\bigcup_{p\leq x}A_p$, where \[A_p=\left\{\alpha_1(p), \alpha_2(p),\dots, \alpha_{p^{Nk}-1}(p)\right\}\] is a complete set of representatives $\frac{1}{p^N} \Z^k/\Z^k$. The set is $\delta$ well-spaced for $\delta_i=\frac{1}{x^{2N}}$ for all $i$. A standard argument shows that Theorem \ref{huxleythm} can be applied to $A$, in order to deduce the statement of Theorem \ref{mean square}. This follows verbatim from the argument given in \cite[p.93, l.15 to p.94, l.10]{gallagher1973large}.
\end{proof}
We next apply the above result to studying the average distribution of local torsion primes. Let $k,N=2$ and for each prime $p$, let $\Omega(p):=\mathfrak{A}_p\subset \Z/p^2\times \Z/p^2$. Thus, for $Y>0$, we have that $P(Y)=\sum_{p\leq Y} \frac{\# \mathfrak{A}_p}{p^4}$. Let $C,D>Y^4$, and consider the set $\mathcal{S}_{C,D}$ of all pairs $m=(a,b)\in \Z\times \Z$ with $|a|<C$ and $|b|<D$. Fix a number $\beta>0$. Let $\mathcal{S}_{C,D}^\beta$ be the subset of $m=(a,b)\in\mathcal{S}_{C,D}$ such that 
\[|\#\Pi_{lt,E_{a,b}}(Y)-P(Y)|<\beta \sqrt{P(Y)},\] and set $\mathcal{Q}_{C,D}^\beta:=\mathcal{S}_{C,D}\backslash \mathcal{S}_{C,D}^\beta$.

\begin{theorem}\label{th s5 main}
Let $Y>0$ be a fixed number. Then, there is an absolute constant $c>0$ such that for any number $\beta>0$ the proportion of elliptic curves $E$ for which 
\begin{equation}\label{boring eqn}|\#\Pi_{lt,E}(Y)-P(Y)|<\beta \sqrt{P(Y)}\end{equation} is $\geq (1-\frac{c}{\beta^2})$. Furthermore, if $C,D>Y^4$, we have that $\frac{\#\mathcal{Q}_{C,D}^\beta}{\#\mathcal{S}_{C,D}}<\frac{c}{\beta^2}$ for any $\beta>0$.
\end{theorem}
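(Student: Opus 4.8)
The plan is to feed Theorem~\ref{mean square} into a second-moment (Chebyshev) argument, in the standard ``large sieve $\Rightarrow$ statistics'' style. First I would make the dictionary between integer pairs and local torsion primes explicit: take $k = N = 2$ and, for each prime $p$, set $\Omega(p) := \mathfrak{A}_p \subset \Z/p^2 \times \Z/p^2$ as in Definition~\ref{ definition Cp Sp}. For an integer pair $m = (a,b)$ with $4a^3 + 27b^2 \neq 0$, Lemma~\ref{init lemma} gives that a prime $p$ with $p \nmid \Delta_{a,b}$ satisfies $\op{rank}_p(E_{a,b}(\Z/p^2)) = 2$ precisely when $E_{a,b}(\Q_p)[p] \neq 0$, i.e.\ precisely when $p$ is a local torsion prime of $E_{a,b}$. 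Hence $P(Y;m) = \#\{p \le Y : m \bmod p^2 \in \mathfrak{A}_p\}$ equals $\#\Pi_{lt,E_m}(Y)$ whenever the model $y^2 = x^3 + ax + b$ is minimal at every $p \le Y$, and in particular $P(Y) = \sum_{p\le Y} \#\mathfrak{A}_p/p^4$ is exactly the quantity appearing in the statement; it is finite by Corollary~\ref{cor frakCp}, and we may assume $P(Y) > 0$ (which holds for all sufficiently large $Y$, the conclusion being vacuous otherwise).

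Now fix $C, D > Y^4$ and apply Theorem~\ref{mean square} with $x = Y$ to the box $B = [-C,C] \times [-D,D]$, whose side lengths $2C, 2D$ exceed $Y^4 = x^{2N}$. This produces an absolute constant $c_0$, depending only on $k = N = 2$, with
\[
\sum_{m \in B \cap \Z^2} \bigl(\#\Pi_{lt,E_m}(Y) - P(Y)\bigr)^2 \;\le\; c_0\, V(B)\, P(Y), \qquad V(B) = 4CD .
\]
Chebyshev's inequality then shows that $\#\{m \in B \cap \Z^2 : |\#\Pi_{lt,E_m}(Y) - P(Y)| \ge \beta\sqrt{P(Y)}\} \le c_0 V(B) P(Y)/(\beta^2 P(Y)) = c_0 V(B)/\beta^2$, and \emph{a fortiori} the same bound holds for the subset $\mathcal{Q}_{C,D}^\beta \subseteq \mathcal{S}_{C,D} \subseteq B \cap \Z^2$. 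Since $\#\mathcal{S}_{C,D} \ge CD = V(B)/4$, this gives $\#\mathcal{Q}_{C,D}^\beta/\#\mathcal{S}_{C,D} \le 4c_0/\beta^2$, which is the second assertion with $c := 4c_0$.

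For the first assertion I would order elliptic curves by height: those of height $< x$ correspond to the minimal pairs $(a,b)$ inside $B_x := [-x^{1/3}, x^{1/3}] \times [-x^{1/2}, x^{1/2}]$, whose side lengths exceed $Y^4$ once $x$ is large. The minimal pairs in $B_x$ number $\sim V(B_x)/\zeta(10)$ (the density $1/\zeta(10)$ of minimal pairs, as in the proof of Theorem~\ref{th 4.6}), while those failing \eqref{boring eqn} lie in $\mathcal{Q}_{x^{1/3},x^{1/2}}^\beta$ and hence number $\le c_0 V(B_x)/\beta^2$. Dividing and letting $x \to \infty$ bounds the proportion of curves failing \eqref{boring eqn} by $\zeta(10) c_0/\beta^2$, and taking $c := 4\zeta(10) c_0$ covers both statements simultaneously.

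The delicate point is not any individual estimate --- Theorem~\ref{mean square} does the real work --- but the bookkeeping at the interface between integer pairs and isomorphism classes of elliptic curves: one must pass from the box $\mathcal{S}_{C,D}$ to the height ordering via minimal Weierstrass equations while keeping track of the factor $\zeta(10)$, and one must control the discrepancy between $\#\Pi_{lt,E_{a,b}}(Y)$ and $P(Y;(a,b))$ coming from models that are non-minimal at some $p \le Y$. Such pairs satisfy $(a,b) \equiv (0,0) \bmod p^2$ for that prime, so they form a set of size $\ll V(B)$ and alter each sum above by only $\ll_Y V(B)$; since $Y$ is fixed and $P(Y)$ is then a fixed positive real, absorbing this error merely inflates the constant by a factor depending on $Y$, which is harmless. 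Alternatively, one restricts $\mathcal{S}_{C,D}$ from the outset to pairs minimal at all $p \le Y$, where $\#\Pi_{lt,E_{a,b}}(Y) = P(Y;(a,b))$ holds on the nose.
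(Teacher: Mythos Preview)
Your argument is correct and follows the same route as the paper: apply Theorem~\ref{mean square} with $k=N=2$ and $\Omega(p)=\mathfrak{A}_p$, then run a Chebyshev bound on the resulting variance estimate. If anything you are more careful than the paper itself, which simply asserts $P(Y;m)=\#\Pi_{lt,E}(Y)$ and that the height-ordered statement follows, whereas you track the minimality issue and the $\zeta(10)$ factor explicitly.
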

\begin{remark}
Setting $\beta=10\sqrt{c}$, we find that $\geq 99\%$ of elliptic curves satisfy \eqref{boring eqn}.
\end{remark}
\begin{proof}
Let $C$ and $D$ be integers such that $C,D>Y^4$. We are to show that there is an absolute constant $c>0$ such that $\frac{\#\mathcal{Q}_{C,D}^\beta}{\#\mathcal{S}_{C,D}}<\frac{c}{\beta^2}$ for any $\beta>0$. This implies that the proportion of elliptic curves $E_{/\Q}$ (ordered by height) for which 
\[|\#\Pi_{lt,E}(Y)-P(Y)|\geq \beta \sqrt{P(Y)}\]
is $\leq c/\beta^2$.
\par Let $E_{a,b}$ be an elliptic curve associated with a minimal pair $m=(a,b)\in \Z\times \Z$. Note that by definition, $P(Y;m)=\#\Pi_{lt, E}(Y)$.
\par According to the square sieve inequality of Theorem \ref{mean square}, 
\begin{equation}\label{last th e1}\sum_{m\in \mathcal{S}_{C,D}}\left(P(m;Y)-P(Y)\right)^2<  c\#\mathcal{S}_{C,D} P(Y),\end{equation} where $c>0$ is an absolute constant. On the other hand, 
\begin{equation}\label{last th e2}\begin{split}\sum_{m\in \mathcal{S}_{C,D}}\left(P(m;Y)-P(Y)\right)^2\geq & \sum_{m\in \mathcal{Q}_{C,D}}\left(P(m;Y)-P(Y)\right)^2,\\
= & \sum_{m\in \mathcal{Q}_{C,D}}\left(\#\Pi_{lt,E_{m}}(Y)-P(Y)\right)^2,\\
\geq  & \sum_{m\in \mathcal{Q}_{C,D}}\beta^2 P(Y)= \# \mathcal{Q}_{C,D} \beta^2 P(Y).\end{split}\end{equation}
Combining the inequalities from \eqref{last th e1} and \eqref{last th e2}, we have shown that $\frac{\#\mathcal{Q}_{C,D}^\beta}{\#\mathcal{S}_{C,D}}<\frac{c}{\beta^2}$, and this completes the proof.
\end{proof}

The above result thus proves an explicit result for the expectation for $\#\Pi_{lt,E}(Y)$, which is shown to be close to $P(Y)= \sum_{p\leq Y}\frac{\# \mathfrak{A}_p}{p^4}$.
\newpage
\section{Numerical data}
\begin{center}
\begin{table}[h]
\caption{Data for $\mathfrak{S}_p/p^2$ for primes $7\leq p<150$.}
\label{tab:1}
\begin{tabular}{ |c|c|c|c| }
\hline
$p$ & $\#\mathfrak{S}_p'/p^2$ & $p$ & $\#\mathfrak{S}_p'/p^2$ \\ 
\hline
 7 & 0.0816326530612245 & 71 & 0.0208292005554453 \\
11 & 0.0413223140495868 & 73 & 0.0270219553387127 \\
13 & 0.0710059171597633 & 79 & 0.0374939913475405\\
17 & 0.0276816608996540 & 83 & 0.0178545507330527 \\
19 & 0.0581717451523546 & 89 & 0.0222194167403106 \\
23 & 0.0415879017013233 & 97 & 0.0255074928260176\\
29 & 0.0332936979785969 & 101 & 0.00980296049406921 \\
31 & 0.0312174817898023 & 103 & 0.0288434348194929 \\
37 & 0.0306793279766253 & 107 & 0.00925845051969604 \\
41 & 0.0118976799524093 & 109 & 0.0181802878545577 \\
43 & 0.0567874526771228 & 113 & 0.0263137285613595 \\
47 & 0.0208239022181983 & 127 & 0.0169260338520677 \\
53 & 0.0277678889284443 & 131 & 0.0189382903094225 \\
59 & 0.0166618787704683 & 137 & 0.0108689860940913 \\
61 & 0.0349368449341575 & 139 & 0.0142849748977796 \\
67 & 0.0147026063711294 & 149 & 0.0133327327597856 \\
 \hline
\end{tabular}
\end{table}
\end{center}
\newpage
\subsection*{Data availability statement} No data was generated or analyzed in this paper.
\bibliographystyle{alpha}
\bibliography{references}
\end{document}